

\documentclass[12pt]{amsart}
\usepackage{enumerate,enumitem,amsmath,amssymb, mathrsfs}
\usepackage{xcolor}

\textwidth 16cm
\textheight 22cm
\topmargin 0.0cm
\oddsidemargin -0.0cm
\evensidemargin -0.0cm
\parskip 0.0cm

\usepackage{latexsym, amssymb,enumerate}
\usepackage{appendix}

\newcommand{\owedge}{\bcw}

\newcommand{\bpf}{\begin{proof}}
\newcommand{\epf}{\end{proof}}
\newcommand{\beq}{\begin{equation}}
\newcommand{\eeq}{\end{equation}}
\newcommand{\beqn}{\begin{eqnarray*}}
\newcommand{\eeqn}{\end{eqnarray*}}
\def\bcw{\mathbin{\bigcirc\mkern-15mu\wedge}}

\newcommand\tr{\mathop{\rm tr}\nolimits}

\newcommand\R{\mathbb{R}}

\def\circledwedge{\setbox0=\hbox{$\bigcirc$}\relax \mathbin {\hbox
to0pt{\raise.5pt\hbox to\wd0{\hfil $\wedge$\hfil}\hss}\box0 }}

\def\R{{\mathfrak R}}

\newtheorem{prop}{Proposition}[section]
\newtheorem{theo}[prop]{Theorem}
\newtheorem{lemm}[prop]{Lemma}
\newtheorem{coro}[prop]{Corollary}

\newtheorem{RK}{Remark}
\def\begeq{\begin{equation}}
\def\endeq{\end{equation}}
\def\p{\partial}

\def\R{\mathbb R}

\def\tr{{\rm tr}}

\def \ds{\displaystyle}

\def\S{\mathbb  {S}}

\def\odot{\setbox0=\hbox{$\bigcirc$}\relax \mathbin {\hbox to0pt{\raise.5pt\hbox to\wd0{\hfil $\wedge$\hfil}\hss}\box0 }}

\numberwithin{equation} {section}
\def\tilde{\widetilde}

\begin{document}

\title[compactness and uniqueness for a class of CCE manifolds]{Perturbation compactness and uniqueness for 
a class of conformally compact Einstein manifolds \\} 
\vskip .1in

\author{Sun-Yung Alice Chang, Yuxin Ge, Xiaoshang Jin and Jie Qing\\}

\address{Department of Mathematics, Princeton University, Princeton, NJ 08544, USA}
\email{chang@math.princeton.edu}

\address{IMT,
Universit\'e Toulouse 3 \\118, route de Narbonne
31062 Toulouse, France}
\email{yge@math.univ-toulouse.fr}

\address{School of mathematics and statistics, Huazhong University of science and technology,
Wuhan, 430074, China}
\email{jinxs@hust.edu.cn}

\address{Mathematics Department, UCSC, 1156 High Street, Santa Cruz, CA 95064 USA }
\email{qing@ucsc.edu}

\dedicatory{\bf In honor of Joel Spruck, with admirations.}

\begin{abstract} In this paper, we establish compactness results for some classes of conformally compact Einstein metrics defined on manifolds of dimension $d\ge 4$. In the special case 
when the manifold is the Euclidean ball with the unit sphere as the conformal infinity, the existence of such class of metrics has been established 
in the earlier work of Graham-Lee \cite{GL}.
As an application of our compactness result, we derive the uniqueness of the Graham-Lee metrics. As a second application, we also derive some gap theorem, or equivalently, some results of non-existence CCE fill-ins.
\end{abstract} 

\thanks{Research of Chang is supported in part by NSF grant DMS-1802285 
and Simons Collaboration Grant}
\thanks{Research of Qing is supported in part by NSF grant DMS-1608782
and Simons Collaboration Grant}

\subjclass[2000]{}

\keywords{}

\maketitle

\section{Introduction and Statement of results}\label{An introduction and statement of results}

\subsection{Introduction}

Let $X$ be a smooth manifold of dimension $d$ with $d \geq 3$ with boundary $\partial X$.
A smooth conformally compact metric $g^+$ on $X$ is a Riemannian metric such that $g = r^2 g^+$ extends smoothly to the 
closure $\overline{X}$ for some defining function $r$. 
A defining function $r$ is a smooth 
nonnegative function on the closure $\overline{X}$ such that $\partial X = \{r=0\}$ and the differential $D r \neq 0$ on $\partial X$. A 
conformally compact metric $g^+$ on $X$ is said to be conformally compact Einstein (CCE) if, in addition, 
$$
\operatorname{Ric} [g^+] = - (d-1) g^+.
$$
The most significant feature of CCE manifolds $(X, \ g^+)$ is that the metric $g^+$ canonically determines  the conformal
structure $[\hat g]$ on the boundary  $\partial X$, where $\hat g = g|_{T\partial X}$. $(\partial X, \ [\hat g])$ is called the conformal infinity of the conformally 
compact manifold $(X, \ g^+)$. It is of great interest in both the mathematics and theoretic physics communities to understand the correspondences between 
conformally compact Einstein manifolds $(X, \ g^+)$ and their conformal infinities $(\partial X, \ [\hat g])$,  partially due to the interest of AdS/CFT
correspondence in theoretic physics (cf. Maldacena \cite{Mald-1,Mald-2,Mald} and  Witten \cite{Wi}). 

The project we work on in this paper is to address the compactness issue:  Given a sequence
of CCE manifolds $(X, \{g_i^{+}\})$ with $M = \partial X$ and $\{ g_i \} = \{ r_i^2 g_i^{+} \} $ a sequence of compactified metrics, with
$ h_i = g_i|_{M}$; assuming $\{h_i\}$ forms a compact family of metrics in $M$, when is it true that some representatives ${\bar g_i} \in  [g_i]$
with $\{ {\bar g}_i |_M =h_i\} $ also forms a compact family of metrics in $X$?

We remark that, for a CCE manifold, given any conformal infinity  $(M, h)$, a special defining function $r$, which we call the geodesic defining function, exists 
so that $ | \nabla_{r^2 g^{+}} r | \equiv 1 $ in an asymptotic neighborhood $M \times [0, \epsilon)$ of $M$ with $r^2 g^{+}|_M = h$.     
We also remark that the eventual goal  in the study of the compactness problem is to establish the existence result of conformal fill-ins for 
some classes of Riemannian manifolds as the conformal infinities.

One of the difficulties to address the compactness problem is due to the existence of a non-local term in the asymptotic expansion of the metric near 
the conformal infinity. To see this, we recall   
the asymptotic behavior of the compactified metric $g$ of CCE manifold $(X, g^{+})$ of
dimension $d$, with conformal infinity $(M, [h] )$, which has been worked out earlier (see \cite{G00,FG12}). It turns out that the behavior is a bit different depending on whether the dimension $d$ is even or odd.

When $d$ is even, we have the expansion:
\begin{equation}
\label{expansioneven} 
 g:=r^2 g^{+} = h + g^{(2)} {r}^2 +\cdots(\mbox{even powers})+ g^{(d-1)} r^{d-1} + g^{(d)} {r}^{d} + \cdot \cdot \cdot \cdot 
\end{equation}
on an asymptotic neighborhood of $M \times (0, \epsilon)$, where $r$ denotes the geodesic defining function corresponding to the conformal infinity $(\partial X, h)$. The $g^{(j)}$ are tensors on $M$, and $g^{(d-1)}$ is trace-free with respect
to the metric $ h$. For $j$ even and $0 \le j \le
d-2$, the tensor $g^{(j)}$ is locally formally determined by the conformal representative $h$, but $g^{(d-1)}$ is a non-local term which is not determined by $h$, subject to the trace free condition. 
 
When $d$ is  odd, the analogous expansion is
\begin{equation}
\label{expansionodd} 
 g:=r^2 g^{+} = h + g^{(2)} {r}^2 +\cdots(\mbox{even powers}) + k {r}^{d-1} \log r+ g^{(d-1)} {r}^{d-1} + \cdot \cdot \cdot \cdot , 
\end{equation}
where the $g^{(j)}$ terms are locally determined for $j$ even and $0 \le  j \le 
d-2$, $k$ is locally determined and trace-free, the trace of  $g^{(d-1)}$ is locally
determined, but the trace-free part of $g^{(d-1)}$  is again not determined by $h$. 
We remark that $h$ together with $g^{(d-1)}$ determine the whole asymptotic behavior of $g$ (\cite {FG12,Biquard})  near the conformal infinity. 

A model case of a CCE manifold is the hyperbolic ball $\mathbb{B}^d$ with the Poincar\'e metric
$g_{H}$ with the conformal infinity the standard metric $h_c$ on the unit 
$d-1$ sphere $\mathbb{S}^{d-1}$. In this case, it was proved
by \cite{Q} (see also \cite{Dutta} and later on by \cite{LQS}) that $(\mathbb{B}^d, g_{\mathbb{H}} )$ is the unique CCE manifold with metric $h_c$ on $\mathbb{S}^{d-1}$ as its conformal infinity.

Another class of examples of CCE manifolds was constructed by Graham-Lee \cite{GL}, where they proved that any metric on $\mathbb{S}^{d-1}$ close enough in the $C^{2, \alpha}$ norm to $h_c$ is the conformal infinity of some CCE metric on the Euclidean unit ball $\mathbb{B}^d$ for all $d\geq 4$. 

In an earlier paper \cite{CGQ}, in the special case when the dimension $d=4$, we have established a compactness
result for classes of CCE manifolds and derived as 
a consequence the uniqueness of the CCE extensions
of Graham and Lee for the class of metrics on $\mathbb{S}^{3}$
which are $C^{3, \alpha}$ close to $h_c$ on $\mathbb{S}^3$.
 
The goal of this paper to extend the results in
\cite{CGQ} to all dimensions $d\ge 4$. 

Recall that when $d=4$, in \cite{CG} and \cite{CGQ}, we have considered a special choice of compactified metric $g^{*}=e^{2w}g^+$ defined on a CCE manifold $(X^4, g^{+})$ of dimension four; which we named as Fefferman-Graham (FG)  compactification. This metric is defined by solving the PDE \cite[Theorem 4.1] {FG}:
\begin{equation}
\label{fgbis}
- \Delta_{g^{+}} w = 3  \,\,\,\, on  \,\,\, X^4,
\end{equation}
where $(w-\log r)|_{\p X}=0$.

On a general $d$-dimensional CCE manifold $(X, g^{+})$, when $d>4$, we will consider a choice of the compactified metric $g^{*}$ which is a special case of a general class of metrics named as ``adapted metrics"  in an earlier paper by Case-Chang  \cite[Section 6]{casechang}. 
The metric was defined by solving the Poisson equation
\begin{equation}
\label{fg}
- \Delta_{g^{+}} v-\frac{(d-1)^2-9}{4} v= 0  \,\,\,\, on  \,\,\, X^d,
\end{equation}
with the Dirichlet data the constant function one, $g^{*} : = v^{\frac {4}{d-4}} g^+ $ with $g^{*} |_M = h$, some fixed metric on the conformal infinity of $(X, g^+)$.  It is known that $g^{*}$ has 
free Q-curvature (see \cite{casechang,CYa}, see also the discussion in Lemma \ref{fgm2} in the current paper).

In this paper, we first consider the case when $d$ is even. 
 In this case, it turns out the method of proof 
in \cite{CGQ} for $d=4$ case can be directly generalized. A key property we will use is the existence of the obstruction tensor  \cite{FG12,GH}  when $d$ is even and which vanishes for metrics conformal to Einstein metrics. When $d=4$, this obstruction tensor is the Bach tensor,  and for metrics conformal to Einstein metrics, the Bach tensor vanishes (i.e. they are Bach flat). As we will see in
the proof of Theorem \ref{maintheorem3bis} in  Section \ref{Sect:bdy1} below, the equation of obstruction flat tensor is an
elliptic equation which allows us to derive an
$\varepsilon$-regularity property for the compactified metrics $g^{*}$ (under the assumptions of Theorem \ref{maintheorem3bis}), and this in turn allows us to gain the regularity of the metric. This gain is the key step which allows us to apply a contradiction argument to reach
the compactness result in the statement of Theorem \ref{maintheorem3bis} below.

\begin{theo} \label{maintheorem3bis}
Suppose that $X$ is a smooth oriented $d$-dimensional manifold with compact boundary with $d$ even and $d \ge 4$.  
Let $\{g_i^+\}$ be a set of conformally 
compact Einstein metrics on $X$. 
Assume that the corresponding metrics $\{h_i\}$ at conformal infinity have non-negative
scalar curvature, and have Yamabe constants uniformly bounded from below by some positive constant $C_1$. Assume further that  $\{h_i\}$ forms a compact family in the $C^{k,\gamma}$-Cheeger-Gromov topology on ${\p X}$ with $k\ge d-2$ when $d\ge 6$ and $k\ge 3$ when $d=4$. Then there exists some small $\delta_0>0$ 
such that if either 
\begin{enumerate}
\item[]($1^{\prime}$) \quad  \quad $\int_{X^d} (|W|^{d/2}dvol)[g^+_i] < \delta_0$, \quad \quad \quad \\
or  
\item[]($1^{\prime\prime}) $  \quad \quad  $Y(\partial X, [ h_i]) \geq Y(\mathbb{S}^{d-1}, [g_{\mathbb{S}}]) - \delta_0, $ 
\end{enumerate}
then the set $\{g_i^*\}$ of the adapted metrics (after diffeomorphisms that fix the boundary) 
is compact in the $C^{k,\gamma'}$-Cheeger-Gromov topology for all $0<\gamma'<\gamma$  on $\overline X$. 
\end{theo}

When the dimension $d$ of the manifold $X$ is odd, in general, we would not expect the strong estimate $C^{d-1}$ as in the cases when $d$ is even due to  the $k r^{d-1} \log r$ term in the expansion of the metric $g$ in \ref{expansionodd}. The coefficient $k$ of this term happens to be the obstruction tensor \cite{FG12,GH} defined on the boundary of $X$ and which in general may not vanish. Thus when $d$ is odd, we will apply a different strategy to gain the regularity of the compactified metric $g^{*}$. It turns out this
strategy actually works for all dimensions $d$ under the somewhat stronger regularity assumption $C^6$  on the boundary metrics when $d$ is small. Instead of exploring the property of vanishing of the obstruction tensor of the metric as
in the case when $d$ is even, we will explore the regularity property of its associated Einstein metric $g^{+}$. In order to do so, in Section \ref{Sect:bdy2} below,  we will modify  the gauge fixing techniques developed earlier in the works  \cite{Biquard1, CDLS, GL,Lee1} for Einstein metrics. We first obtain the regularity of the adapted metrics near the neighborhood of the conformal infinity; we next introduce some suitable weighted spaces and apply the functional analytic 
techniques for such spaces to avoid the degeneracy and obtain the $\varepsilon$ regularity of the adapted metric $g^{*}$.

The analysis in Sections 3 and 4 outlined above leads us to our second result below dealing with CCE manifolds of general dimensions $d$.

\begin{theo} \label{maintheorem3}
Suppose that $X$ is a smooth oriented $d$-dimensional manifold with compact boundary and with $d\ge 4$. Let $\{g_i^+\}$ be a set of conformally 
compact Einstein metrics on $X$. 
 Assume that the corresponding metrics $\{h_i \}$ at conformal infinity have non-negative
scalar curvature, and their Yamabe constants uniformly bounded from below by some constant $C_1>0$. Assume further $\{h_i\}$ are compact in  the  $C^6$-Cheeger-Gromov topology on ${\p X}$. Then there exists some small $\delta_0>0$ such that if either ($1^{\prime}$) or ($1^{\prime\prime}$) holds, the set $\{g_i^*\}$ of the adapted metrics (after diffeomorphisms that fix the boundary) 
is compact in the $C^{3,\gamma'}$-Cheeger-Gromov topology for all $0<\gamma'<1$ on ${\overline X}$.  
\end{theo}

\begin{RK}
\begin{enumerate}
\item  The results in Theorems \ref{maintheorem3bis} and \ref{maintheorem3} have been proved earlier in \cite{CGQ} when $d=4$ 
 in the $C^{3,\gamma'}$ topology; we remark that this is not the optimal estimate.  With more work, we could improve the estimate to $C^{2,\gamma'}$ by applying the intermediate Schauder estimates due to Gilbarg-H\"ormander \cite{GHo}.

\item If we assume the set $\{h_i\}$ of metrics on the boundary with non-negative scalar curvature that represent the conformal infinities lies in a given set $\mathcal{C}$ of the $C^{5,\gamma}$-Cheeger-Gromov topology, 
we can obtain the compactness result for $g_i^{*}$  in the $C^{3,\gamma'}$-Cheeger-Gromov topology 
for all  $0<\gamma'<\gamma<1$.

\item  In the statements of Theorems \ref{maintheorem3bis} and \ref{maintheorem3}, if we assume further that $\partial X=\mathbb{S}^{d-1}$, the small constant $\delta_0$ could be chosen independent of the topology of $X$. To see so, in our proof of the 
theorems, we can apply our argument instead on a fixed manifold $X$, on a sequence of CCE metrics $(X_i, g_i^+)$, with $\partial X_i = \mathbb{S}^{d-1} $ for each $i$, but we allow $X_i$ to have different topology. We claim the same blow-up analysis of the Cheeger-Gromov theory in Sections \ref{Sect:bdy1}, \ref{Sect:bdy2} and \ref{Sect:compactness} also apply, and which allows us to reach the same compactness results.
\end{enumerate}
\end{RK}

As an application of Theorem \ref{maintheorem3}, we are able to establish some global uniqueness result for the CCE 
metrics on $X$ with prescribed conformal infinities constructed by Graham-Lee \cite{GL}.

\begin{theo} \label{uniqueness} On $(\S^{d-1}, h_c)$ with $d\ge 4$, there is a small $C^{6}$ neighborhood of $h_c$
such that every metric $h$ in the neighborhood allows  
exactly one conformally compact Einstein metric $g^+$ fill-in on $X$ with $(\S^{d-1}, h)$ as its conformal infinity. Moreover, the topology of $X$ is the same as the Euclidean ball $\mathbb{B}^d$.
\end{theo}

As a direct consequence,  we have the following non-existence of CCE fill-ins result.\\
 
\begin{coro} \label{nonexistence}  Let $X$ be a d-dimensional compact differential manifold with boundary $\p X=\mathbb{S}^{d-1}$ and a metric $h$ be defined on $\p X$, with  $d \ge 4$. There is a constant $\varepsilon >0$, such that if $X$ is not homeomorphic to the unit ball $\mathbb{B}^d$ and $\|h-h_c \|_{C^6}\le \varepsilon$, 
then there does not exist any
CCE fill-in on $X$ with $(S^{d-1}, h)$ as its conformal infinity.
\end{coro}

\begin{RK}We remark
\begin{itemize}
\item In the statement of Corollary \ref{nonexistence}, the constant $\varepsilon>0$ could be chosen independent of the topology of $X$.\\

\item There are many examples of manifolds $X$ which satisfy the assumptions in Corollary \ref{nonexistence}.  For example, let $Y$ be a closed manifold topologically different than the unit sphere $\mathbb{S}^{d}$ and $B_r\subset Y$ be a small closed ball in $Y$.  Then $X:= Y\setminus B_r$ satisfies the assumptions of
Corollary \ref{nonexistence}. 
When the dimension $d=4$, we can also take $Y$ be any closed homology $4$-sphere.

\end{itemize}
\end{RK}

The paper is organized as follows: In Section \ref{Sect:prelim}, we recall some basic ingredients  which will be used later in the proofs of main theorems and list some of their key 
properties, including in particular the estimates of the injectivity radius. 
In Section \ref{Sect:bdy1}, we prove the boundary regularity for $X$ when $d$ is even.
In Section \ref{Sect:bdy2}, we present a different proof for the boundary regularity for all $d$ dimensional CCE manifolds $X$  which works for all 
$d$.
In Section \ref{Sect:compactness}, we establish various compactness results for the adapted metrics and prove Theorems \ref{maintheorem3bis} and \ref{maintheorem3}. In Section \ref{Sect:uniqueness}, we prove Theorem \ref{uniqueness} of the uniqueness of Graham-Lee metrics. In Corollary \ref{gap} we establish as an application some gap phenomenon for classes of conformal invariants.\\

We remark that in the paper, we have provided separate arguments to gain the regularity of the compactified metrics on $X$ in Section \ref{Sect:bdy1} (when $d$
is even) and in Section \ref{Sect:bdy2} (for all $d$). In the rest of the paper i.e. in Sections \ref{An introduction and statement of results}, \ref{Sect:prelim}, \ref{Sect:compactness} and \ref{Sect:uniqueness},  the arguments work for both even or odd $d$. 
 

\section{Preliminaries}\label{Sect:prelim}
\subsection{Basic properties of adapted metrics $g^*$.}\label{Subsect:rho}

Let $v$ be a solution of (\ref{fg}).  We define a class of adapted  metrics $g^*$ by $g^{*} := v^{\frac4{d-4}} g^+ $   when the dimension $d$ is greater than 4. First, we recall some asymptotic properties of $v$.

\begin{lemm} (Case-Chang \cite{casechang}, Chang-R. Yang \cite{CYa}) \label{fgmetric}
Suppose $(X^{d}, g^{+})$ is conformally compact Einstein with conformal infinity $({\p X}, [h])$,  fix $h \in [h ]$ and $r$ its
corresponding geodesic defining function.  Assume  $v$ is a solution of (\ref{fg}),
then $ v $ has the asymptotic behavior 
$$ v=  \, r^{\frac{d-4}{2}}(A + B r^3) $$ near ${\p X}$, 
where $A, B$ are functions even in $r$, such that $A|_{\p X}\equiv 1$.
\end{lemm}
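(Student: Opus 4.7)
The key algebraic observation is the factorisation
\[
\tfrac{(d-1)^{2}-9}{4} \;=\; \tfrac{(d-4)(d+2)}{4} \;=\; s(d-1-s), \qquad s=\tfrac{d-4}{2}\ \text{or}\ s=\tfrac{d+2}{2}.
\]
Thus the two indicial roots of the operator $-\Delta_{g^{+}} - \tfrac{(d-1)^{2}-9}{4}$ at the conformal infinity are $s_-=\tfrac{d-4}{2}$ and $s_+=\tfrac{d+2}{2}$, whose difference is exactly the integer $3$. This gap is what forces the asymptotic solution to split into two pieces, one with leading order $r^{(d-4)/2}$ and one with leading order $r^{(d+2)/2}=r^{(d-4)/2}\cdot r^{3}$, producing precisely the $A+Br^{3}$ shape of the statement.

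I would then make this concrete as follows. Write the CCE metric in the Fefferman--Graham normal form $g^{+}=r^{-2}(dr^{2}+g_{r})$, where by the Einstein condition $g_{r}$ admits an even Taylor expansion in $r$ up to the relevant order. The conformal change of the Laplacian yields
\[
-\Delta_{g^{+}}\phi \;=\; -(r\partial_{r})^{2}\phi + (d-1)(r\partial_{r})\phi - \tfrac{r^{2}}{2}\tr(g_{r}^{-1}\partial_{r}g_{r})\,\partial_{r}\phi - r^{2}\Delta_{g_{r}}\phi.
\]
Substituting the ansatz $v=r^{(d-4)/2}\phi$ with $\phi=\sum_{j\ge 0}\phi_{j}\,r^{j}$ and matching like powers of $r$, the top-order part produces the coefficients $j(3-j)\phi_{j}$, while the remaining terms contribute only through coefficients $\phi_{i}$ with $i\le j-2$, weighted by Taylor coefficients of $g_{r}$. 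One obtains a recursion of schematic form
\[
j(3-j)\,\phi_{j} \;=\; F_{j}(\phi_{j-2},\phi_{j-4},\dots),
\]
in which $F_{j}$ involves only coefficients $\phi_{i}$ of the same parity as $j$, precisely because $g_{r}$ has an even Taylor expansion. Normalising $\phi_{0}\equiv 1$ forces $\phi_{1}=0$ and uniquely determines $\phi_{2}$; at the resonant index $j=3$ the factor $j(3-j)$ vanishes, but $F_{3}$ only depends on $\phi_{1}=0$, so the compatibility holds automatically and $\phi_{3}$ is left free. For $j\ge 4$ the recursion uniquely propagates each parity class. Grouping by parity gives $v=r^{(d-4)/2}(A+Br^{3})$ formally, with $A=\sum_{k}a_{2k}r^{2k}$, $B=\sum_{k}b_{2k}r^{2k}$, and $A|_{\partial X}\equiv 1$.

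The remaining step, and the main obstacle, is to pass from this formal expansion to an honest expansion of the actual solution $v$ with smooth even coefficients $A,B$. This is supplied by the indicial-operator / scattering regularity theory of Mazzeo--Melrose and Graham on asymptotically hyperbolic manifolds: because the two indicial roots differ by a positive integer and the Fefferman--Graham expansion of $g^{+}$ is even to sufficiently high order, the unique solution with prescribed leading behaviour $r^{(d-4)/2}$ and leading coefficient $1$ is polyhomogeneous with the stated structure, up to possible logarithmic corrections at very high order which do not affect the leading $A+Br^{3}$ shape. The essential geometric input throughout is the Einstein equation, which is precisely what guarantees the evenness of the background expansion of $g^{+}$ and thereby the evenness of $A$ and $B$.
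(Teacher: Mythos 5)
Your proposal is correct and follows essentially the same route as the paper: both identify the indicial roots $\tfrac{d-4}{2}$ and $\tfrac{d+2}{2}$ (the paper writes $s=\tfrac{d-1}{2}+\tfrac{3}{2}$, so $d-1-s=\tfrac{d-4}{2}$) and then invoke the Mazzeo--Melrose/Graham--Zworski scattering theory to get $v=r^{\frac{d-4}{2}}A+r^{\frac{d+2}{2}}B$ with $A,B$ even and $A|_{\p X}=1$. The paper's proof is just this citation; your explicit formal recursion $j(3-j)\phi_j=F_j(\dots)$ and the check of compatibility at the resonance $j=3$ simply fill in details the paper leaves to the references, so no gap.
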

 
This lemma is a special case of the general scattering theory on CCE manifolds as described in Graham-Zworski \cite{GZ}. In below we will describe some properties of this adapted metric $g^{*} $.


\begin{lemm} (Case-Chang \cite[Lemma 6.2]{casechang}) 
\label{fgm2} 
With the same notation as in Lemma \ref{fgmetric}, 
the adapted metric $g^*$ is totally geodesic on boundary with 
the free $Q$-curvature, that is,  $Q_{g^{*}} \equiv 0 .$  
\end{lemm}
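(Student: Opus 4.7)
The plan is to split the proof into two independent parts: first verifying the vanishing of $Q_{g^*}$ via the Paneitz operator, then verifying the totally geodesic property via direct computation with the boundary expansion.

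For the Q-curvature part, I would exploit the conformal transformation law of the Paneitz operator $P_4$. Under the conformal change $g^* = v^{4/(d-4)} g^+$, one has
\begin{equation*}
P_4[g^+]\,v \;=\; \frac{d-4}{2}\,Q_{g^*}\,v^{(d+4)/(d-4)},
\end{equation*}
so it suffices to check $P_4[g^+] v = 0$. Here I would use the known factorization of the GJMS operators on an Einstein manifold with $\operatorname{Ric}[g^+] = -(d-1)g^+$: the Paneitz operator splits as a product of two second-order factors
\begin{equation*}
P_4[g^+] \;=\; \bigl(-\Delta_{g^+} - s_1(d-1-s_1)\bigr)\bigl(-\Delta_{g^+} - s_2(d-1-s_2)\bigr),
\end{equation*}
with $s_j = \tfrac{d-1}{2} \pm \tfrac{3}{2}$. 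The eigenvalue appearing in (\ref{fg}) is precisely $\tfrac{(d-1)^2-9}{4} = s(d-1-s)$ for $s = \tfrac{d-1}{2}+\tfrac{3}{2}$, so $v$ lies in the kernel of one of the factors, hence in the kernel of $P_4[g^+]$. Since $v > 0$ in the interior, we conclude $Q_{g^*} \equiv 0$. The main point to justify carefully is this factorization identity on Einstein manifolds, which is the technical ingredient (essentially the Fefferman--Graham/Gover factorization, and the one used in Case--Chang \cite{casechang}).

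For the totally geodesic part, I would work in the geodesic normal form of $g^+$ near $\partial X$. Writing $g^+ = r^{-2}(dr^2 + h_r)$ with $h_r = h + g^{(2)} r^2 + O(r^4)$ an even expansion in $r$, and using Lemma~\ref{fgmetric} to substitute $v = r^{(d-4)/2}(A + Br^3)$ with $A, B$ even in $r$ and $A|_{\partial X}=1$, a direct computation gives
\begin{equation*}
g^* \;=\; v^{4/(d-4)}\,g^+ \;=\; (A + B r^3)^{4/(d-4)}\,(dr^2 + h_r).
\end{equation*}
The tangential components of the second fundamental form are, up to sign, $\tfrac{1}{2}\partial_r g^*_{ij}\big|_{r=0}$. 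Since $h_r$ is even in $r$, $\partial_r h_r|_{r=0} = 0$; and since $A$ is even while $Br^3$ vanishes to order $3$, we have $\partial_r(A+Br^3)^{4/(d-4)}|_{r=0} = 0$. Thus the second fundamental form vanishes on $\partial X$ and $g^*$ is totally geodesic on the boundary.

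The main obstacle is really only the first part: one must invoke (or reprove) the Einstein-manifold factorization of $P_4$ in the form needed, and identify $s_1(d-1-s_1) = \tfrac{(d-1)^2-9}{4}$ as the eigenvalue appearing in (\ref{fg}). Once this identification is made, both the Q-curvature vanishing and the totally geodesic boundary behavior follow from routine conformal calculus and the Fefferman--Graham/Graham--Zworski asymptotic expansion of $v$ already recorded in Lemma~\ref{fgmetric}.
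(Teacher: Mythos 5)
Your argument is essentially the paper's: conformal covariance of $P_4$ plus the factorization of $P_4[g^+]$ on the Einstein manifold shows $P_4[g^+]v=0$, hence $Q_{g^*}\equiv 0$, and the evenness of $A$ together with the $O(r^3)$ vanishing of the $B$-term in Lemma \ref{fgmetric} gives the vanishing of the second fundamental form; your boundary computation in the normal form $g^*=(A+Br^3)^{4/(d-4)}(dr^2+h_r)$ is just a slightly more explicit version of the paper's remark that the normal derivative of the conformal factor vanishes on $M$.

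One correction is needed in the key identity you defer to. With $\operatorname{Ric}[g^+]=-(d-1)g^+$ the two second-order factors carry \emph{different} constants,
\begin{equation*}
P_4[g^+]=\left(-\Delta_{g^+}-\frac{(d-1)^2-1}{4}\right)\circ\left(-\Delta_{g^+}-\frac{(d-1)^2-9}{4}\right),
\end{equation*}
i.e.\ the spectral shifts are $\pm\tfrac12$ (equivalently $s=\tfrac d2$) for one factor and $\pm\tfrac32$ (equivalently $s=\tfrac{d+2}2$) for the other. Your choice ``$s_j=\tfrac{d-1}{2}\pm\tfrac32$'' for \emph{both} factors gives $s_1(d-1-s_1)=s_2(d-1-s_2)=\tfrac{(d-1)^2-9}{4}$, so the identity as you wrote it reads $\bigl(-\Delta_{g^+}-\tfrac{(d-1)^2-9}{4}\bigr)^2$, which is not $P_4[g^+]$ (the two constants in the true factorization are distinct). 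This does not break your proof: the only fact you use is that $-\Delta_{g^+}-\tfrac{(d-1)^2-9}{4}$ occurs as one of the (commuting) factors and that $v$ lies in its kernel by (\ref{fg}), together with your correct identification $\tfrac{(d-1)^2-9}{4}=s(d-1-s)$ at $s=\tfrac{d-1}{2}+\tfrac32$; once the factorization is stated correctly the argument coincides with the paper's proof.
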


The result is a special case of a much more general result in \cite{casechang}. To avoid introducing more notations, here we will present a self-contained proof.

\begin{proof} 
Recall the fourth order Paneitz operator is given by
$$
P_4=(-\triangle)^2 +\delta(4A-\frac{d-2}{2(d-1)} R) \nabla + \frac{d-4}{2} Q_4,
$$
where $A=\frac{1}{d-2}(Ric-\frac{R}{2(d-1)}g)$ denotes the Schouten tensor, $\delta$ is the dual operator of the differential $\nabla$ and $Q_4$ is a fourth order $Q$-curvature.
More precisely, let $\sigma_k(A)$ denote the $k$-th symmetric function of the eigenvalues of $A$ and $Q_4:=-\triangle \sigma_1(A)+4\sigma_2(A)+\frac{d-4}{2}\sigma_1(A)^2$. For a  Einstein metric with $Ric_{g^+}=-(d-1)g^+$, 
and $$P_4[g^+] = (- \Delta_{g^+}-\frac{(d-1)^2-1}{4}) \circ (- \Delta_{g^+} -\frac{(d-1)^2-9}{4}) .$$
Therefore, due to the conformal invariant property of the Paneitz operator, we have 
$$ Q_4[g^{*}]= \frac{2}{d-4} 
P_4[g^{*}] 1 = \frac{2}{d-4} v^{-\frac{d+4}{d-4}}P_4[g^{+}] v=0. $$
We also remark that it follows from the asymptotic behavior of $v$ (Lemma \ref{fgmetric}) that $g^*$ is totally geodesic on boundary since $\ds \frac{\p }{\p \nu}\left(\frac{v^{\frac{2}{d-4}}}{r}\right)=0$ on $M$ where $\nu$ is the normal vector on the boundary.
\end{proof}

We now recall the formula of the Ricci curvature under conformal change of metrics, applying to $g^* = \rho^2 g^+$, we get
$$
Ric[g^+] = Ric[g^*] + (d-2)\rho^{-1} \nabla^2 \rho + (\rho^{-1} \triangle\rho - (d-1)\rho^{-2} |\nabla \rho|^2) g^*.
$$
Thus
$$
R[g^+] = \rho^2 (R[g^*] + \frac{2(d-1)}{\rho} \triangle \rho -\frac{d(d-1)}{\rho^2}  |\nabla \rho |^2).
$$
Applying (\ref{fg}), we get
\beq \label{relation3}
R[g^*] = 2(d-1)\rho^{-2}(1-  |\nabla\rho|^2),
\eeq
which in turn gives
\beq
\label{relation5}
Ric[g^*]=- (d-2)\rho^{-1} \nabla^2 \rho+\frac{4-d}{4(d-1)}R[g^*]g^*,
\eeq
and
\beq
\label{relation4}
R[g^*] = - \frac{4(d-1)}{d+2}\rho^{-1} \triangle \rho.
\eeq
 We now recall another important property of the adapted metrics  $ g^*$ established in an earlier work of Case and Chang \cite[Lemma 4.2]{casechang}).

\begin{lemm} 
\label{lem4.1}
Suppose that $X$ is a smooth $d$-dimensional manifold with boundary $\partial X$ and $g^+$ is a conformally compact Einstein metric on 
$X$ with the conformal infinity $(\p X, [h])$ of nonnegative Yamabe type. Let $g^*=\rho^2 g^+$ be the special class of adapted metric (considered in previous Lemmas) associated with the metric $h$ with the positive scalar curvature in the conformal infinity. Then the scalar curvature $R[g^*]$ is positive in 
$X$. In view of (\ref{relation3}), which implies that
\beq\label{estimate-rho}
\|\nabla\rho \| [g^*] \le 1.
\eeq
\end{lemm}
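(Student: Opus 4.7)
By (\ref{relation3}) the identity
\[ R[g^*]=\frac{2(d-1)}{\rho^2}\bigl(1-\|\nabla\rho\|_{g^*}^2\bigr) \]
shows that $R[g^*]>0$ is equivalent to $\|\nabla\rho\|_{g^*}<1$; in particular, once positivity of $R[g^*]$ in $X$ is known, the bound $\|\nabla\rho\|_{g^*}\le 1$ holds on $\overline X$ (with equality precisely on $\partial X$). The task therefore reduces to proving $R[g^*]>0$ in the interior.

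The plan is a maximum principle argument for $u:=\|\nabla\rho\|_{g^*}^2$. First I would verify the boundary behaviour: from Lemma \ref{fgmetric} and $\rho=v^{2/(d-4)}$ one checks that $\rho$ is, to leading order, a geodesic defining function for $g^*$, so $u\to 1$ as $\rho\to 0$; meanwhile $g^*|_{\partial X}=h$ yields $R[g^*]|_{\partial X}=R[h]>0$ by hypothesis. Suppose toward a contradiction that $R[g^*]$ is nonpositive somewhere in $X$; since $u\to 1$ at $\partial X$, the function $u$ attains an interior maximum at some point $p_0$ with $u(p_0)>1$, equivalently $R[g^*](p_0)<0$.

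At $p_0$ one has $\nabla u=0$ (equivalently $\nabla^2\rho(\nabla\rho,\cdot)=0$) and $\Delta u\le 0$. Applying the Bochner formula to $\rho$ and substituting the conformal--Einstein identities (\ref{relation4}) and (\ref{relation5}) together with the first-order PDE $\rho\,\Delta_{g^*}\rho=\tfrac{d+2}{2}(u-1)$ (itself a direct consequence of the eigenfunction equation (\ref{fg}) applied to $v=\rho^{(d-4)/2}$), one obtains a pointwise inequality at $p_0$ involving only $u(p_0)$, $\rho(p_0)$, $R[g^*](p_0)$, and $|\nabla^2\rho|^2(p_0)$.

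The main obstacle is that Cauchy--Schwarz alone on the Hessian (giving $|\nabla^2\rho|^2\ge(\Delta\rho)^2/d$) is not sharp enough to force $u(p_0)\le 1$ for all $d\ge 4$. To close the argument I would invoke the free $Q$-curvature property $Q_4[g^*]\equiv 0$ from Lemma \ref{fgm2}; unpacked, it yields the identity $\Delta R[g^*]=\frac{d\,R[g^*]^2}{4(d-1)}-4(d-1)|A[g^*]|^2$, which, combined with the trace-free part of (\ref{relation5}), provides the missing lower bound on the trace-free Hessian of $\rho$. Feeding this back into the Bochner inequality at $p_0$ produces the required contradiction, following the strategy of Case--Chang \cite{casechang}.
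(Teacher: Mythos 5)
The paper itself does not prove this lemma---it is recalled from Case--Chang \cite{casechang} with no argument given---so your proposal can only be judged on its own terms, and it has a genuine gap exactly at the step you identify as the crux. Carry your plan out at the interior maximum $p_0$ of $u=\|\nabla\rho\|_{g^*}^2$: from $\nabla u=0$ one gets $\nabla^2\rho(\nabla\rho,\cdot)=0$ and $\rho\nabla R=-2R\nabla\rho$ at $p_0$, and then Bochner together with (\ref{relation4})--(\ref{relation5}) and $\Delta u\le 0$ gives precisely $0\ge |\nabla^2\rho|^2+\tfrac{3}{2(d-1)}R\,u$ at $p_0$, which is perfectly compatible with $R(p_0)<0$. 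Your proposed rescue via $Q_4[g^*]\equiv 0$ does not supply the missing lower bound on the trace-free Hessian: the only second-order information available at $p_0$ is $\Delta(\rho^2R)\ge 0$, which is the same statement as $\Delta u\le 0$, and inserting it into (\ref{Q-eq}) together with the trace-free part of (\ref{relation5}) produces an \emph{upper} bound on the norm of the trace-free Hessian of $\rho$---indeed literally the same inequality as the Bochner one. This is no accident: for the adopted metric, (\ref{Q-eq}) is a differential consequence of (\ref{relation3})--(\ref{relation5}) (apply $\Delta$ to (\ref{relation3}) and use Bochner), so quoting $Q$-flatness cannot add pointwise information at $p_0$. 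Pushing the computation through yields at best an a priori bound of the form $\rho^2|R|(p_0)\le C(d)$, with $C(d)$ degenerating like $(d-4)^{-2}$ (so nothing at all when $d=4$), and never the needed conclusion $u(p_0)\le 1$. The contradiction you assert at the end is therefore not there, and this is the heart of the proof, not a routine verification.

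Two further points reinforce and supplement this. Structurally, your argument uses the hypotheses only through boundary asymptotics ($u\to1$, and $R[h]>0$ to get $u<1$ near $\p X$); positivity of $R[g^*]$ throughout $X$ is a global statement, and in \cite{casechang} the nonnegative Yamabe type enters globally (through its spectral/maximum-principle consequences for $-\Delta_{g^+}$, which in particular make $v>0$ and the relevant comparisons possible), not through a pointwise estimate at an interior critical point---the fact that your sketch never invokes anything of this kind is a further sign it cannot close. Finally, two smaller slips: by Lemma \ref{bdy}(2) the boundary value is $R[g^*]|_{\p X}=\tfrac{2(d-1)}{d-2}R[h]$, not $R[h]$ (harmless for the sign); and your dichotomy does not exclude the borderline case of an interior maximum with $u(p_0)=1$, i.e.\ $R(p_0)=0$, which would require a strong maximum principle or a separate argument.
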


We will see in Section \ref{Sect:compactness} that property (\ref{estimate-rho}) implies the convergence on compact subsets of a sequences of rescaled adapated metrics, which is one of the key ingredients 
to establish the compactness results in Theorem \ref{maintheorem3bis} and Theorem \ref{maintheorem3}.

\vskip .2in

\subsection{Elliptic estimates for the adapted metrics}
  
Let $R_{ikjl}$, $W_{ikjl}$, $R_{ij}$ and $R$ be Riemann, Weyl, Ricci, Scalar curvature tensors respectively.  We recall on general Riemannian manifold $(X, g)$ of dimension $d$, the  fourth-order Bach tensor $B$ is defined as 
\beq
\label{Bach-eq-0}
B_{ij}:=\frac{1}{d-3}\nabla^k\nabla^l W_{ikjl}+\frac{1}{d-2} W_{ikjl} R^{kl}.
\eeq
Recall also the Cotton tensor $\mathcal{C}$ is defined as 
\beq
\label{Cotten-0}
\mathcal{C}_{jik} = A_{ji,k} - A_{jk,i},
\eeq
where $A$ is the Schouten tensor.
Recall also a relation between the divergence of Weyl tensor to the Cotton tensor, namely
\beq
\label{Bach-Cotton}
\nabla^l W_{ikjl}=(d-3) \mathcal{C}_{jik}.
\eeq
Applying this relation (\ref{Bach-Cotton}), we can write the  Bach tensor into the following   form:
\beq
\label{Bach-eq-1}
(d-2)B_{ij}=\Delta R_{ij} - \frac{d-2}{2(d-1)} \nabla_i\nabla_j R -\frac{1}{2(d-1)}\triangle R g_{ij}+ Q_1(Rm),
\eeq
where $Q_1(Rm)$ is the quadratic term  on  Riemann curvature tensor
$$
 Q_1(Rm) := 2 W_{ikjl}R^{kl}-\frac{d}{d-2}{R_i}^kR_{jk}+\frac{d}{(d-1)(d-2)} R R_{ij}+(\frac{1}{d-2}R_{kl}R^{kl}-\frac{R^2}{{(d-1)(d-2)} })g_{ij}.
$$
Thanks to the second Bianchi identity for the Weyl tensor (see for instance \cite{CG,CGY}), we have
$$
-W_{ikjl,mm} - W_{jlmi,km} + W_{mkjl,im} = \Psi_{ikjl},
$$
where $\Psi_{ikjl}:= \mathcal{C}_{lkm,m}g_{ji} + \mathcal{C}_{lmi,m}g_{jk}+  \mathcal{C}_{lik,j}- \mathcal{C}_{jkm,m}g_{li}- \mathcal{C}_{jmi,m}g_{lk}- \mathcal{C}_{jik,l}$. A direct computation leads to rewrite the Bach equation (\ref{Bach-eq-1}) in turns of the Weyl tensor as follows:
\beq\label{Bach-eq-2}
\Delta W_{ikjl} + (d-3)\nabla_l \mathcal{C}_{jki} + (d-3)\nabla_j \mathcal{C}_{lik} + \nabla_i\mathcal{C}_{kjl} +\nabla_k\mathcal{C}_{ilj} : = K_{ikjl}+L_{ikjl},
\eeq
where $K $ is a quadratic of curvatures and $L_{ikjl}:=-B_{ji}g_{kl}-B_{lk}g_{ij}+B_{li}g_{jk}+B_{kj}g_{il}$ is some linear term on the Bach tensors.  


We also recall that the adapted metric $g^*$ which we haven chosen in Section \ref{Subsect:rho} is $Q$-flat, i.e., $Q[g^*]=0$, which can be 
expressed in the following form \cite{P,B}:
\beq \label{Q-eq}
-\triangle R=-\frac{d^3-4d^2+16d-16}{4(d-2)^2(d-1)}R^2+ \frac{4(d-1)}{(d-2)^2}|Ric|^2.
\eeq

In the proof of Theorem \ref{regularityinterior} and Lemma \ref{bdy1} in Section \ref{Sect:bdy1}, we will first estimate the Bach tensor, then incorporate the $Q$-flat property of $g^*$ into the Bach equation (\ref{Bach-eq-1}) to derive estimates of the Ricci curvature of $g^*$ and into the equation (\ref{Bach-eq-2}) to derive estimates of its Weyl curvature. 

In order to estimate the Bach tensor and the Cotton tensor  of $g^*$ in the interior of $X$, as 
$g^{*}$ is conformal to the Einstein metric $g^+$, we can simplify their expressions as 
(2.14) and (2.15) below. 

\begin{lemm}\label{BachCotton}
On $(X, g)$, suppose $\tilde g=e^{2w}g$, we have
\beq
\label{Cottonnew}
 \tilde {\mathcal{C}_{ijk}}:=\mathcal{C}_{ijk}[\tilde g]=\mathcal{C}_{ijk}[g]-g^{ml}W_{kjim}[g]w_l,
\eeq
\beq
\label{Bachnew}
\begin{array}{ll}
\tilde {B_{ij}}:= B_{ij}[\tilde g]=&e^{-2w}B_{ij}[ g]+e^{-2w}(d-4)\langle \nabla w,  {\mathcal{C}}_{i\cdot j}
+ {\mathcal{C}}_{j\cdot i} \rangle_g\\
&+e^{-2w}(d-4) w^{k}w^{l}\ W_{kijl}[g],
\end{array}
\eeq
where $w^{k}= \nabla^k w$ (resp. $w_{k}= \nabla_k w$) is the the contravariant (resp. covariant) derivative of $w$ with respect to the metric $g$. 
\end{lemm}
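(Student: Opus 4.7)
The plan is to derive both identities by direct computation from the standard conformal change formulas. The starting point is the transformation of the Schouten tensor under $\tilde g = e^{2w} g$, namely
$\tilde{A}_{ij} = A_{ij} - \nabla_i\nabla_j w + w_i w_j - \tfrac{1}{2}|\nabla w|^2 g_{ij}$,
together with the Christoffel correction $\tilde{\Gamma}^k_{ij} - \Gamma^k_{ij} = \delta^k_i w_j + \delta^k_j w_i - g_{ij} w^k$, and the conformal invariance of the Weyl tensor $W^i{}_{jkl}$ as a $(1,3)$-tensor (so that the $(0,4)$-tensor $W_{ijkl}[\tilde g] = e^{2w} W_{ijkl}[g]$). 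All subsequent manipulations are applications of these three facts together with curvature commutator identities.

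For the Cotton identity (\ref{Cottonnew}), I would compute $\tilde{\mathcal{C}}_{ijk} = \tilde\nabla_k \tilde A_{ij} - \tilde\nabla_j \tilde A_{ik}$ by expanding each $\tilde\nabla$ via the Christoffel correction and each $\tilde A$ via the formula above. Antisymmetrization in $(j,k)$ kills the many symmetric terms. The residual third-order $w$-derivatives collapse by the Ricci identity $\nabla_k \nabla_j w_i - \nabla_j \nabla_k w_i = -R_{kji}{}^l w_l$, after which the Riemann tensor is decomposed into Weyl plus Schouten contributions; the Schouten pieces are absorbed by the remaining quadratic-in-$\nabla w$ terms and cancel, leaving only the announced correction $-g^{ml}W_{kjim} w_l$ to $\mathcal{C}_{ijk}$.

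For the Bach identity (\ref{Bachnew}), I would work from the defining expression (\ref{Bach-eq-0}). The $(0,4)$-Weyl rescales by $e^{2w}$, the inverse Ricci $R^{kl}$ involves two inverse metric factors and hence $e^{-4w}$ times the conformally transformed Ricci, and $\tilde\nabla^k \tilde\nabla^l$ differs from $\nabla^k\nabla^l$ by a polynomial in $w$, $\nabla w$, and $\nabla^2 w$ coming from the Christoffel corrections. After expansion, collect terms by total order in derivatives of $w$: the zeroth-order piece reproduces $e^{-2w} B_{ij}[g]$; the part linear in $\nabla w$ reorganizes, using $\nabla^l W_{ijkl} = (d-3)\mathcal{C}_{kij}$ together with the symmetries of $W$ and $\mathcal{C}$, into the contribution $(d-4)\langle \nabla w, \mathcal{C}_{i\cdot j} + \mathcal{C}_{j\cdot i}\rangle_g$; and the part quadratic in $\nabla w$ yields the $(d-4)\, W_{kijl} w^k w^l$ term. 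The factor $(d-4)$ arises because all the ``bad'' second- and third-derivative-of-$w$ terms enter with this coefficient, reflecting the classical fact that the Bach tensor is conformally invariant precisely when $d=4$; these bad terms are shown to cancel after invoking the traced second Bianchi identity $\nabla^j A_{ij} = \tfrac{1}{2(d-1)} \nabla_i R$ and the Cotton--Weyl relation (\ref{Bach-Cotton}).

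The main obstacle is the bookkeeping in the Bach calculation: since Bach is fourth order in the metric, its conformal transformation produces a proliferation of terms of degrees zero through four in $w$ and its derivatives, and verifying that every term without an explicit $(d-4)$ factor and every $\nabla^k w$ term with $k\ge 2$ cancels requires careful and sustained use of the Ricci, Bianchi, and Cotton--Weyl identities. The Cotton identity is a short warm-up by comparison.
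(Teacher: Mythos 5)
Your outline is correct and coincides with what the paper actually does: the paper offers no written proof of Lemma \ref{BachCotton} beyond the remark that \eqref{Cottonnew} and \eqref{Bachnew} follow from ``routine but tedious computations,'' and your plan (Schouten and Christoffel transformation rules, conformal invariance of $W$, Ricci identity for the Cotton formula, then expansion of \eqref{Bach-eq-0} with the Cotton--Weyl relation \eqref{Bach-Cotton} organizing the $(d-4)$ terms) is exactly that standard computation, correctly structured. No gap to report.
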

(\ref{Cottonnew}) and (\ref{Bachnew}) are derived by a routine computation. 

If we apply Lemma \ref{BachCotton} to the adapted metrics $g^*=\rho^2 g^+$, using the fact that both  the Bach tensor and the Cotton tensor for the Einstein metric $g^+$ vanish, and the fact that $W_{jkil}[g^*]=\rho^2 W_{jkil}[g^+]$, we obtain the following formulas for the Bach tensor and Cotton tensor for $g^*$.

\begin{coro}\label{Bachcoro}
Suppose $(X, g^{+})$ is a conformally compact Einstein with adapted metrics $g^*=\rho^2 g^+$. Then,  we have 
\beq
\label{Bachex}
B_{ij}[g^*]=\rho^{-2}(d-4)\rho^k \rho^l W_{ikjl}[g^*]=-(d-4)\rho^{-1}\rho^k \mathcal{C}_{ikj}[g^*],
\eeq
\beq
\label{Cottonex}
 \mathcal{C}_{ijk}[g^*]=\rho^{-1}\rho^l W_{jkil}[g^*],
\eeq
where $\rho^l=\nabla^l \rho$  is the contravariant  derivative of $w$ with respect to the metric $g^*$.
\end{coro}

In the next two lemmas, we will
derive some preliminary estimates of the curvatures of $g^{*}$ and prepare ourselves for the proof of the main results in Section \ref{Sect:bdy1}.

We now recall some basic facts  relating the behavior of the curvatures of $g^*$ on the boundary to that of the curvature of its boundary metric, which we denote by $\hat g$.

We denote $\p_1$ the outward unit boundary normal direction; $\alpha,\beta\in \{2,\cdots, d\}$  the tangential directions
on $M= \p X$.\\

\begin{lemm}\label{bdy}
Suppose $(X, g^{+})$ is conformally compact Einstein with conformal infinity $({\p X}, [h])$. We assume $g^{*}$ is $C^3$. 

Then on the boundary $M=\p X$ we have:
\begin{enumerate}
\item $R=\frac{2(d-1)}{d-2}\hat{R}$;
\item $R_{11}=\frac{d}{2(d-2)}\hat{R}, R_{1\alpha }=0, R_{\alpha \beta}=\frac{d-2}{d-3}\hat{R}_{\alpha \beta}-\frac{1}{2(d-2)(d-3)}\hat{R}g_{\alpha \beta;}$
\item $ W_{\alpha\beta\gamma\delta}=\hat{W}_{\alpha\beta\gamma\delta}$ and Weyl tensor vanishes for all other indices;
\item $ \mathcal{C}_{\alpha\beta\gamma}=\hat{\mathcal{C}}_{\alpha\beta\gamma}$ and Cotton tensor vanishes for all other indices;
\item $\nabla_1 A_{11}=\frac{\nabla_1 R}{2(d-1)}, \nabla_\alpha A_{\beta\gamma}=\hat{\nabla}_\alpha\hat{A}_{\beta\gamma}$ and the first covariant derivatives of Schouten tensor $A$  vanishes for all other indices;
\item $ \nabla_\sigma W_{\alpha\beta\gamma\delta}=\hat{\nabla}_\sigma \hat{W}_{\alpha\beta\gamma\delta},\;\nabla_1 W_{\alpha\beta\gamma1}=-\nabla_1 W_{\alpha\beta1\gamma}=\nabla_1 W_{\gamma1\alpha\beta}=-\nabla_1 W_{1\gamma\alpha\beta}=\hat{C}_{\gamma\alpha\beta}$
and the first covariant derivatives of Wyel tensor $W$  vanishes for all other indices.
\end{enumerate}
\end{lemm}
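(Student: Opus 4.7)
The plan is to exploit, simultaneously, Lemma \ref{fgm2} (which supplies (1) and will control the asymptotics of $\rho$) and the Einstein condition on $g^+$ through the conformal identities (\ref{relation3})--(\ref{relation5}). A preliminary observation: since $R[g^*]$ is smooth up to $\p X$, formula (\ref{relation3}) forces $|\nabla\rho|_{g^*}=1$ on $\p X$ and $1-|\nabla\rho|_{g^*}^2=O(\rho^2)$; together with (1) this means $\nabla\rho|_{\p X}$ is the unit inward normal, the second fundamental form vanishes, and $\rho$ behaves like a geodesic defining function to leading order.

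I would start with (4), using the conformal invariance of the Weyl tensor (as a $(1,3)$ tensor): $W_{ijkl}[g^*]=\rho^2 W_{ijkl}[g^+]$. The Gauss--Codazzi--Ricci equations for the totally geodesic hypersurface give directly $R_{\alpha\beta\gamma\delta}[g^*]|_{\p X}=\hat R_{\alpha\beta\gamma\delta}$ and $R_{1\alpha\beta\gamma}[g^*]|_{\p X}=0$. Combined with the fact that $g^+$ Einstein forces the Riemann tensor of $g^+$ to reduce to its Weyl part algebraically, this yields both the tangential identity $W_{\alpha\beta\gamma\delta}[g^*]|_{\p X}=\hat W_{\alpha\beta\gamma\delta}$ and the vanishing of all mixed components with one normal index. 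The remaining $W_{1\alpha 1\beta}$ components then vanish because the trace-free property $g^{ik}W_{ijkl}=0$, combined with the tangential identity already proved, reduces $W_{1\alpha 1\beta}$ on $\p X$ to a Weyl-trace of $\hat W$, which is zero.

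Next, for (2) and (3), I use (\ref{relation5}) in the form $R_{ij}[g^*]=-(d-2)\rho^{-1}\nabla^2_{ij}\rho+\frac{4-d}{4(d-1)}R[g^*]g^*_{ij}$. The tangential Hessian of $\rho$ vanishes on $\p X$ because the boundary is totally geodesic, so the apparent singular factor $\rho^{-1}\nabla^2_{\alpha\beta}\rho$ has a finite limit, to be extracted from the second-order expansion of $\rho$. That second-order expansion is controlled by the $Q$-flat property (Lemma \ref{fgm2}) together with the asymptotic form of $v$ in Lemma \ref{fgmetric}; combined with (\ref{relation4}) and the Gauss identity for scalar curvature ($\hat R=R[g^*]-2R_{11}[g^*]$ on a totally geodesic boundary), matching traces gives (2) and (3) algebraically.

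For (5)--(7), I would use the identity $\mathcal{C}_{ijk}[g^*]=\rho^{-1}\nabla^l\rho\,W_{jkil}[g^*]$ from Corollary \ref{Bachcoro}. On $\p X$, $\nabla\rho$ is the unit normal, so the tangential components $\mathcal{C}_{\alpha\beta\gamma}|_{\p X}$ reduce to a $0/0$ limit $\rho^{-1}W_{\beta\gamma\alpha 1}[g^*]$, which by a Codazzi identity equals the normal derivative of the mixed Weyl component, and this in turn equals $\hat{\mathcal{C}}_{\alpha\beta\gamma}$; the components of $\mathcal{C}[g^*]$ involving a normal index vanish because after contraction the relevant Weyl entries vanish on $\p X$ by (4). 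Property (6) follows from the definition $A=\frac{1}{d-2}(\mathrm{Ric}-\frac{R}{2(d-1)}g)$ together with (2)--(3) and Codazzi, and (7) is a direct consequence of (4)--(5) and the Bianchi identity $\nabla^l W_{ijkl}=(d-3)\mathcal{C}_{kij}$ restricted to the totally geodesic boundary. The main obstacle throughout is carrying out the $0/0$ limits cleanly in (3), (5), and (7); once the even expansion of $\rho$ is pinned down to the required order from Lemmas \ref{fgmetric}--\ref{fgm2}, the rest is algebraic bookkeeping.
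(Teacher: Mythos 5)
Your plan breaks down at item (4), and the gap propagates to (3), (5) and (6). For a totally geodesic boundary, Gauss and Codazzi do give $R_{\alpha\beta\gamma\delta}=\hat R_{\alpha\beta\gamma\delta}$ and $R_{1\beta\gamma\delta}=0$ (hence $R_{1\alpha}=0$, $W_{1\beta\gamma\delta}=0$), but decomposing the Gauss identity yields
\begin{equation*}
\hat W_{\alpha\beta\gamma\delta}-W_{\alpha\beta\gamma\delta}=\big((A-\hat A)\owedge\hat g\big)_{\alpha\beta\gamma\delta},
\qquad
W_{1\beta 1\delta}=(d-3)\big(A_{\beta\delta}-\hat A_{\beta\delta}\big)+\tr_{\hat g}(A-\hat A)\,\hat g_{\beta\delta},
\end{equation*}
the second line coming from tracing with the trace-free property of $W$. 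So the tangential Weyl identity and $W_{1\alpha1\beta}=0$ are \emph{equivalent} to the nontrivial statement $A_{\alpha\beta}[g^*]|_{\p X}=\hat A_{\alpha\beta}$ (equivalently $R_{1\alpha1\beta}[g^*]|_{\p X}=\hat A_{\alpha\beta}$), and this cannot be obtained ``algebraically'' from Gauss--Codazzi plus the Einstein condition: the Einstein condition only says $Rm[g^+]=W[g^+]-\tfrac12 g^+\owedge g^+$, and conformal invariance of $W$ does not let you evaluate boundary values without the second-order boundary expansion of the metric itself. The paper supplies exactly this input: it writes $g^*=w^{-2}g_1$ with $g_1=r^2g^+$ the geodesic compactification, notes $w=1$, $\nabla w=0$ on $\p X$ (so the tangential Schouten components and the Weyl tensor of $g^*$ and $g_1$ agree there), and then computes from the Fefferman--Graham expansion $g_r=h+g^{(2)}r^2+O(r^4)$, $g^{(2)}=-\hat A$, that $R_{1\alpha1\beta}[g_1]=-\tfrac12 g_{\alpha\beta,11}=\hat A_{\alpha\beta}$. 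Your outline never invokes the FG expansion of the metric — only the expansion of the conformal factor from Lemmas \ref{fgmetric}--\ref{fgm2} — and expanding $\rho$ alone cannot produce $R_{1\alpha1\beta}=\hat A_{\alpha\beta}$. For the same reason ``matching traces'' can only give scalar identities such as (2) and the value of $R_{11}$ (that part of your argument, via $\hat R=R-2R_{11}$, is fine); it cannot give the full tensorial identities $R_{\alpha\beta}$ in (3), $\nabla_\alpha A_{\beta\gamma}=\hat\nabla_\alpha\hat A_{\beta\gamma}$ in (6), or (4). Note also the ordering problem: you prove (4) first, yet (4) presupposes the Schouten comparison that your steps (2)--(3) were supposed to produce later.

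Two further points. First, for (2) the specific factor $\tfrac{2(d-1)}{d-2}$ (twice the value $\tfrac{d-1}{d-2}\hat R$ of a geodesic compactification) is the content of the Case--Chang computation which the paper simply cites; your proposal to re-derive it from the $Q$-flat property and the $r^2$-coefficient of $A$ in Lemma \ref{fgmetric} is plausible in outline but is asserted, not carried out. Second, your argument for (5) is circular as stated: the claim that $\lim_{\rho\to0}\rho^{-1}W_{\beta\gamma\alpha1}=\nabla_1W_{\beta\gamma\alpha1}$ ``equals $\hat{\mathcal C}_{\alpha\beta\gamma}$ by a Codazzi identity'' is precisely item (7), which the paper derives \emph{from} (5). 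The actual mechanism is to differentiate $\rho\,\mathcal C_{ijk}=\nabla^l\rho\,W_{jkil}$ in the normal direction, use $\nabla^lW_{jkil}=(d-3)\mathcal C_{ijk}$ together with the intrinsic divergence $\hat\nabla^\alpha\hat W_{jki\alpha}=(d-4)\hat{\mathcal C}_{ijk}$ and the vanishing statements in (4), and solve the resulting linear relation $(d-4)\mathcal C_{ijk}=(d-4)\hat{\mathcal C}_{ijk}$ (note the division by $d-4$, which is why $d=4$ is treated separately in the earlier paper). Your derivation of $W_{1\alpha1\beta}=0$ from the tangential identity via trace-freeness, and of (7) from (4)--(5) and the divergence identity, are correct shortcuts — but only once the missing Schouten/FG input above is in place.
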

\vskip .2in

All the identities in Lemma \ref{bdy}  above are straightforward consequence of the Gauss-Codazzi equation and the fact that for the boundary of the adapted metric $g^{*}$ is totally geodesic. Similar results as in the statement has been established before when $d=4$ in the earlier work of \cite[Lemma 2.7]{CG}.  The proof for general dimensions is tedious but similar, which we will place in appendix A.

The next lemma is the iteration process to express the higher order derivatives of the Ricci and Weyl curvatures of $g^*$ on the boundary in term of the curvature of the boundary metric; these formulas will be used in the proof of Lemma \ref{bdy1} in Section \ref{Sect:bdy1}.

\begin{lemm}\label{bdybis1}
Suppose $(X, g^{+})$ is conformally compact Einstein with conformal infinity $({\p X}, [h])$ with  $d\ge 6$. Then, for the $C^{d-1}$ adapted metrics $g^*$,  we have on the boundary $M=\p X$ for the all multi-index $ I = (i_1,\cdots, i_l)$ of the length $|I|:=l\le d-3$ with $1\le i_1,\cdots, i_l\le d$
$$
\nabla_I A=P(\hat\nabla_\gamma{\hat A},  \hat\nabla_\delta \hat{W},  \hat\nabla_\kappa (\nabla_1 R)|_M), \; \nabla_{I} W=P_1(\hat\nabla_\gamma{\hat A},  \hat\nabla_\delta \hat{W},  \hat\nabla_\kappa (\nabla_1 R)|_M),
$$
where $P$ and $P_1$ are  some homogenous polynomials on $(\hat\nabla_\gamma{\hat A},  \hat\nabla_\delta \hat{W},  \hat\nabla_\kappa (\nabla_1 R)|_M)$ with the multi-indices $\gamma,\delta,\kappa$ satisfying  $|\gamma|+|\delta|+|\kappa|\le l$ for each term in the polynomials, each component of $\gamma,\delta,\kappa$ taking values from $2$  to $d$, where $|\cdot|$ designates the length of the multi-indice.
\end{lemm}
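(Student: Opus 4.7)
My plan is induction on $l=|i|$. For the base case $l\le 1$, Lemma \ref{bdy} does the job: parts (2)--(5) express $A$ and $W$ on $M$ as algebraic polynomials in $\hat A$ and $\hat W$, and parts (6)--(7) handle first covariant derivatives, the only genuinely normal derivative of curvature appearing being $\nabla_1 R|_M$ in the formula $\nabla_1 A_{11}=\nabla_1 R/(2(d-1))$; all other first-order components reduce to tangential derivatives of $\hat A$ and $\hat W$. This matches the asserted form with $|\gamma|+|\delta|+|\kappa|\le 1$.

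For the inductive step, I work in the boundary-normal coordinate system adapted to $g^*$, with $\partial_1$ the unit inward $g^*$-normal and $\{x_\alpha\}_{\alpha\ge 2}$ coordinates along $M$. Because $M$ is totally geodesic for $g^*$, the Christoffel symbols $\Gamma^k_{1\alpha}$ vanish on $M$. Two consequences are decisive: first, $(\nabla_\alpha T)|_M$ agrees with $\hat\nabla_\alpha(T|_M)$ up to curvature commutator terms of strictly lower order, which are handled by the inductive hypothesis; second, the mean-curvature first-order term in the decomposition $\Delta=\nabla_1^2+\hat\Delta+H\nabla_1+\cdots$ drops out on $M$. Thus after freely commuting $\nabla_1$ past tangential derivatives (at the cost of curvature commutators subsumed by the induction), it suffices to express $\nabla_1^k A|_M$ and $\nabla_1^k W|_M$ for $k\le l\le d-3$ in the asserted form.

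The reduction of $\nabla_1^k$ with $k\ge 2$ is driven by three elliptic identities, each converting a bulk Laplacian into tangential data plus curvature quadratics: the $Q$-flat equation (\ref{Q-eq}) controls $\Delta R$, the Bach equation (\ref{Bach-eq-1}) controls $\Delta R_{ij}$ in terms of $B_{ij}$, $\nabla^2 R$ and curvature quadratics, and the identity (\ref{Bach-eq-2}) controls $\Delta W_{ijkl}$ in terms of $\nabla\mathcal{C}$, $B$ and curvature quadratics. For the adopted metric, Corollary \ref{Bachcoro} supplies the crucial algebraic simplifications $B[g^*]=-(d-4)\rho^{-1}\nabla^k\rho\,\mathcal{C}_{ikj}$ and $\mathcal{C}[g^*]=\rho^{-1}\nabla^l\rho\,W_{jkil}$. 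Since $\nabla\rho|_M$ is the unit inward normal and Lemma \ref{bdy}(4)--(5) force $\mathcal{C}_{i1j}|_M=0$ and $W_{jki1}|_M=0$ for the relevant index patterns, the factor $\rho^{-1}$ is absorbed, via a Taylor expansion along $\partial_1$, into a single normal derivative of a strictly lower-order curvature quantity. Iterating, replacing each $\nabla_1^2$ by $\hat\Delta$ plus lower-order bulk terms and each $B$ or $\mathcal{C}$ by its algebraic expression, unwinds the normal order down to $k=0$ or $k=1$, at which point the base case applies.

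The principal difficulty is bookkeeping the multi-index budget. Each application of a PDE trades $\nabla_1^2$ for $\hat\Delta$ together with curvature quadratics whose individual multi-indices are strictly smaller, so the sum $|\gamma|+|\delta|+|\kappa|$ never exceeds $l$; each use of (\ref{Bachex}) or (\ref{Cottonex}) lowers the curvature differentiation order by one while raising the normal order by one, again preserving the total count. That $\nabla_1 R|_M$ is the unique surviving non-tangential boundary datum follows from Lemma \ref{bdy}(6), which pins down $\nabla_1 A$ and $\nabla_1 W$ on $M$ as tangential data except for this single scalar. The regularity restriction $l\le d-3$ is precisely the range below the onset of the non-local Fefferman--Graham coefficient $g^{(d-1)}$, beyond which the present local reduction necessarily fails.
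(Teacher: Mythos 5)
Your setup (induction on $l$, base case from Lemma \ref{bdy}, reduction of mixed indices to pure normal derivatives by commuting at the cost of lower-order curvature terms, and the use of (\ref{Q-eq}), (\ref{Bach-eq-1}), (\ref{Bach-eq-2}) together with Corollary \ref{Bachcoro} to trade $\nabla_1^2$ for tangential data) matches the paper's strategy. But there is a genuine gap at the heart of the inductive step: your iteration does not close. When you "absorb" the factor $\rho^{-1}$ in $\mathcal{C}[g^*]=\rho^{-1}\nabla^l\rho\,W_{jkil}$ (or in $B[g^*]$) by Taylor expansion along $\partial_1$, the normal derivative $\nabla_1^{(r)}\mathcal{C}_{ijk}|_M$ gets expressed through $\nabla_1^{(r+1)}\bigl(\nabla^l\rho\,W_{jkil}\bigr)|_M$, whose leading term is $\nabla_1^{(r+1)}W_{jki1}|_M$ -- a normal derivative of $W$ of exactly the order you are trying to control at that stage. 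As you yourself note, each such substitution "lowers the curvature differentiation order by one while raising the normal order by one," i.e.\ it preserves the total count; but preserving the count is not enough -- nothing in your scheme strictly decreases, so the unwinding "down to $k=0$ or $1$" is not justified.

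The paper's resolution of precisely this circularity is the missing idea: it differentiates (\ref{Cottonexbis}) normally and then converts the troublesome term via the contracted Bianchi identity (\ref{Bach-Cotton}), writing $\nabla_1 W_{jki1}=\nabla^p W_{jkip}-\nabla^\beta W_{jki\beta}=(d-3)\mathcal{C}_{ijk}-\nabla^\beta W_{jki\beta}$, so that the same-order Cotton derivative reappears on the right with coefficient $(d-3)$ while the remainder is a \emph{tangential} divergence of a strictly lower-order normal derivative (handled by induction). One then solves the resulting linear relation $(d-3-r)\nabla_1^{(r)}\mathcal{C}_{ijk}=\nabla^\beta\nabla_1^{(r-1)}W_{jki\beta}+\hbox{l.o.t.}$, which is possible exactly because $r\le d-4$, i.e.\ $l\le d-3$. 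This is also where the order restriction genuinely enters the proof; your attribution of it to the onset of the non-local Fefferman--Graham coefficient $g^{(d-1)}$ is a reasonable heuristic for why some restriction must exist, but it is not the operative mechanism and cannot substitute for exhibiting the nonvanishing coefficient $(d-3-r)$. With that step inserted -- control $\nabla_1^{(r)}\mathcal{C}$ first, then feed it into (\ref{Bach-eq-1}) and (\ref{Bach-eq-2}) to get $\nabla_1^{(r+1)}A$ and $\nabla_1^{(r+1)}W$ -- your argument becomes the paper's proof.
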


\begin{proof}
We prove the result by induction.\\
For $l=0,1$, it follows from Lemma \ref{bdy}.\\
Assume the result is true for $l=r$. When $i_1,\cdots,i_{r+1}$ are not all equal to 1, we could change the order of the covariant derivative  such  that
$$
\nabla_{i} A=\nabla_{i_j} \nabla_{i'}A+P_r(\nabla_m Rm),
$$
where $i_j\neq 1$, $i'$ designates the multi-index removed $i_j$, $|m |\le r$, and $P_r$ involves only the derivatives of  Riemann curvature of the order less than $r$. In such case, the results follow from the induction. 
The proof is similar for the Weyl tensor $W$.

Now we treat the $r+1$ order the normal derivatives $\nabla_1^{(r+1)} A$ and  $\nabla_1^{(r+1)} W$. For this purpose, we study first $\nabla_1^{(r)} \mathcal{C}_{ijk}$. Recall (\ref{Cottonexbis}) and take the $r$ order normal derivatives so that
$$
\label{induction}
\begin{array}{ll}
r \nabla_1^{(r)} \mathcal{C}_{ijk}&= \nabla_1^{(r)}  W_{jki1}+ Q_r(\nabla_m Rm)\\
&=\nabla_1^{(r-1)}  \delta W_{jki\cdot}-\nabla_1^{(r-1)}  \nabla^\beta W_{jki\beta} + Q_r(\nabla_m Rm)\\
&=\nabla_1^{(r-1)}  \delta W_{jki\cdot}- \nabla^\beta \nabla_1^{(r-1)}  W_{jki\beta} + \bar Q_r(\nabla_m Rm)\\
&=(d-3)\nabla_1^{(r)} \mathcal{C}_{ijk} - \nabla^\beta \nabla_1^{(r-1)}  W_{jki\beta} + \bar Q_r(\nabla_m Rm).
\end{array}
$$
Here $Q_r,\bar Q_r$ involves only the derivatives of  Riemann curvature of the order less than $r$ and we use the relations (\ref{relation3}) to (\ref{relation4}) and the assumption in the induction. Therefore, we deduce
$$
(d-3-r)\nabla_1^{(r)} \mathcal{C}_{ijk}=\nabla^\beta \nabla_1^{(r-1)}  W_{jki\beta} - \bar Q_r(\nabla_m Rm),
$$
which yields the desired result for the Cotton tensor $C$. Applying the equations (\ref{Bach-eq-1}) to (\ref{Bach-eq-2}), we obtain 
\beqn
\nabla_1^{(r+1)} A=\nabla_1^{(r-1)}\triangle A-\nabla_1^{(r-1)} \nabla_\beta \nabla^\beta A, \\
\nabla_1^{(r+1)} W=\nabla_1^{(r-1)}\triangle W-\nabla_1^{(r-1)} \nabla_\beta \nabla^\beta W.
\eeqn
Hence, the claim follows.  Thus we have finished the proof of the lemma.
\end{proof}

\vskip .2in

\subsection{Some results in Riemannian geometry for CCE manifolds}
One fundamental tool to achieve compactness results in Riemannian geometry 
is the Cheeger-Gromov convergence theory (see, for example, \cite{CGT, Anderson0} for 
manifolds without boundary, and  \cite{Perales, Kodani, Knox, Wong, AKKLT},  for 
manifolds with boundary). For our purpose, here we recall some basic facts of the Cheeger-Gromov compactness theorem for manifolds with boundary.

\begin{lemm}(\cite[Theorem 3.1]{AKKLT},\cite[Remark 2.7]{CGQ})  \label{AKKLT}
Suppose that ${\mathcal M}(R_0,i_0, h_0, d_0)$ is the set of all compact Riemannian manifolds $(X, g)$ with boundary such that 
$$
\aligned
|Ric_X|\le R_0, & \quad |Ric_{\p X}|\le R_0\\
i_{\text{int}}(X) \ge i_0, & \quad  i_\p (X) \ge 2i_0, \quad i(\p X) \ge i_0,\\
\text{Diam}(X) \le d_0, & \quad \| H \|_{Lip(\p X)} \le h_0,
\endaligned 
$$
where $Ric_{\p X}$ is the Ricci curvature of the boundary, $i(\p X)$ is the injectivity radius of the boundary, $i_{\text{int}}(X)$ is the interior injectivity radius, $i_\partial (X, g) $ is  the boundary injectivity radius and $H$ is the mean 
curvature of the boundary.  Then ${\mathcal M}(R_0,i_0, h_0, d_0)$  is pre-compact in the $C^{1,\alpha}$ Cheeger-Gromov topology for 
any $\alpha\in (0,1)$. Moreover, if the Ricci 
curvatures are bounded in the $C^{k, \alpha}$ norm and the boundaries are all totally geodesic with $k\ge 0$, then one has the pre-compactness in the $C^{k+2, \alpha'}$-Cheeger-Gromov topology with $\alpha'<\alpha$. Furthermore, one has the pre-compactness in the Cheeger-Gromov topology with base points 
when we drop
the assumption on the upper bound of the diameter $\text{Diam}(X)$.
\end{lemm}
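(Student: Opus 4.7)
The plan is to reduce the compactness problem to a standard elliptic regularity extraction by working in harmonic (respectively boundary harmonic) coordinates, then apply Arzel\`a--Ascoli. The key point is to establish a uniform positive lower bound on the harmonic radius $r^{1,\alpha}(M)$ across the entire family ${\mathcal M}(R_0,i_0,h_0,d_0)$; once such a bound is in hand, one obtains uniformly many coordinate charts with uniform bi-Lipschitz control, and on each chart the metric components $g_{jk}$ satisfy a semilinear elliptic equation that converts the curvature bound into a $C^{1,\alpha}$ bound.

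For the interior case (points $p$ with $\dist(p,\partial X)> r$), the classical Jost--Karcher estimate says that a two-sided Ricci bound together with a positive injectivity radius gives a uniform lower bound on the harmonic radius. In such coordinates the metric satisfies
\beq
-\tfrac{1}{2}g^{ab}\partial_a\partial_b g_{jk} + Q(g,\partial g) = R_{jk},
\eeq
so $|Ric|\le R_0$ combined with the bi-Lipschitz bound \eqref{relation1} yields, via Schauder, uniform $C^{1,\alpha}$ control of $g_{jk}$, which is Anderson's argument.

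For the boundary case, the strategy is to construct boundary harmonic coordinates of uniform size at any $p\in\partial X$. First, the bounds $|Ric_{\partial X}|\le R_0$ and $i(\partial X)\ge i_0$ applied intrinsically on $\partial X$ produce harmonic coordinates $(x_2,\dots,x_d)$ on a ball of definite radius in $\partial X$. Next one harmonically extends these tangential coordinates into $X$ by solving $\Delta_g x_\alpha = 0$ with Dirichlet data from $\partial X$, and lets $x_1$ solve $\Delta_g x_1 = 0$ with $x_1|_{\partial X}=0$ and $\partial_\nu x_1|_{\partial X}=1$. The hypothesis $i_\partial(X)\ge 2i_0$ prevents the appearance of focal points within a definite normal distance and guarantees that $x_1$ is a genuine defining function on a half-ball of uniform size; the Lipschitz bound on the mean curvature $H$ controls the normal derivative $\partial_1 g_{\alpha\beta}|_{\partial X}$ and gives enough regularity of the boundary data to run a mixed Dirichlet-type Schauder estimate for the system displayed above. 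This yields the uniform boundary harmonic radius.

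With the uniform harmonic radius in hand, the diameter bound $\diam(X)\le d_0$ produces a uniformly finite cover by such charts, and an Arzel\`a--Ascoli/diagonal argument extracts a $C^{1,\alpha}$ Cheeger--Gromov limit. The main obstacle, and the only place where serious new work beyond Anderson's theorem is needed, is gluing the interior and boundary charts: one must check that transition maps between overlapping interior and boundary harmonic charts remain uniformly bounded in $C^{2,\alpha}$, which again rests on the boundary injectivity radius bound. For the higher-regularity statement, assume in addition $Ric\in C^{k,\alpha}$ uniformly and $\partial X$ totally geodesic; then the second fundamental form vanishes identically on $\partial X$, so the tangential components of the metric equation admit clean boundary data and an elliptic bootstrap on the same system upgrades convergence to $C^{k+2,\alpha'}$ for any $\alpha'<\alpha$. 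Finally, dropping the diameter hypothesis costs only the passage to pointed Cheeger--Gromov convergence, since the above arguments are local in nature and the uniform harmonic radius lets one exhaust each manifold by balls of fixed radius around the basepoint.
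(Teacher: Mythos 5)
First, a point of reference: the paper does not prove this lemma at all — it is quoted as a black box from \cite[Theorem 3.1]{AKKLT} together with \cite[Remark 2.7]{CGQ} — so your sketch has to be measured against the argument in those references. Your interior discussion (Anderson's harmonic-radius bound plus the equation for $g_{jk}$ in harmonic coordinates) and the final covering/Arzel\`a--Ascoli and pointed-limit remarks are standard and fine. The boundary step, however, which is the entire content of the cited theorem, has two genuine problems. (1) You prescribe for $x_1$ both $x_1|_{\partial X}=0$ and $\partial_\nu x_1|_{\partial X}=1$ while also requiring $\Delta_g x_1=0$; this is an overdetermined boundary value problem and in general has no solution. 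Boundary harmonic coordinates (as defined in Subsection 2.3 of this paper and in AKKLT) impose only the Dirichlet condition $x_1=0$ on $\partial X$, and one must then argue separately (Hopf lemma / gradient estimates, using the boundary injectivity radius) that $x_1$ is a genuine coordinate on a half-ball of definite size. (2) More seriously, you claim that the Lipschitz bound on the mean curvature $H$ ``controls the normal derivative $\partial_1 g_{\alpha\beta}|_{\partial X}$''. That normal derivative is, up to a factor, the full second fundamental form, whereas $H$ is only its trace; no second-fundamental-form bound appears among the hypotheses. The whole reason the AKKLT theorem is nontrivial — and is cited rather than rederived — is precisely that uniform $C^{1,\alpha}$ control must be extracted from $\mathrm{Ric}_X$, $\mathrm{Ric}_{\partial X}$, $H\in \mathrm{Lip}$ and the three injectivity radii alone: in boundary harmonic coordinates the tangential metric components receive Dirichlet data from the intrinsic boundary metric (controlled via $\mathrm{Ric}_{\partial X}$ and $i(\partial X)$), and $H$ enters only through a Neumann-type condition for the remaining components, so that the coupled elliptic system closes without ever bounding the second fundamental form. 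As written, your boundary harmonic-radius argument silently assumes exactly the control that is not available, so that step would fail on this family.

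Two smaller remarks. The ``main obstacle'' you single out — uniform $C^{2,\alpha}$ bounds on transition maps between interior and boundary charts — is in fact routine once the harmonic radius bound is established, since transition functions between harmonic charts are themselves harmonic with respect to a metric already controlled in $C^{1,\alpha}$, and Schauder theory applies; the genuinely hard work sits in the uniform boundary harmonic radius with mean-curvature-only control. For the higher-order assertion, your bootstrap idea is the right spirit, but ``clean boundary data'' needs substance: with totally geodesic boundary one typically argues by reflecting (doubling) across $\partial X$, or by checking that the mixed boundary conditions for the Ricci system retain the required $C^{k,\alpha}$ regularity, before Schauder can be iterated to give $C^{k+2,\alpha'}$ with $\alpha'<\alpha$.
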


Another important tool in  Riemannian geometry is to find criteria to establish the no collapsing phenomenon. In the setting of conformal compact Einstein manifolds, we will achieve this by applying an inequality (\ref{Eq:LQS}) recently discovered
by Li-Qing-Shi (\cite{LQS}, see also \cite{Dutta}. This inequality plays an important role in our proof of Theorem \ref{uniqueness} and Corollary \ref{gap}.)\\

\begin{lemm}\label{Lem:LQS} (Li-Qing-Shi \cite[Theorem 1.3]{LQS}) 
Suppose that $(X^d, g^+)$ is a conformally compact Einstein manifold with its conformal infinity of positive
Yamabe constant $Y(\p X, [h])$. Then, for any $p\in X^d$,
\begin{equation}\label{Eq:LQS}
1\ge \frac {\text{vol}_{g^+}(B(p, r))}{\text{vol}_{g_{\mathbb{H}^d}}(B(r))} \ge \left(\frac {Y(\p X, [h])}
{Y(\mathbb{S}^{d-1}, [g_{\mathbb{S}^{d-1}}])}\right)^\frac {d-1}2 
\end{equation}
\end{lemm}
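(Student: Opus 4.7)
The plan is to establish the two bounds separately: Bishop--Gromov comparison for the upper bound, and an asymptotic collar computation combined with the sharp Yamabe identity for the lower bound.

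Since $\operatorname{Ric}[g^+]=-(d-1)g^+$ agrees exactly with the Ricci tensor of $(\mathbb{H}^d, g_{\mathbb{H}^d})$, the classical Bishop--Gromov volume comparison theorem applies without modification: the ratio
$$r \longmapsto \frac{\mathrm{vol}_{g^+}(B(p,r))}{\mathrm{vol}_{g_{\mathbb{H}^d}}(B(r))}$$
is non-increasing in $r$ and tends to $1$ as $r\to 0^+$. This delivers the upper bound immediately and, by monotonicity, reduces the lower bound to controlling the limit as $r\to\infty$.

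For the lower bound, I would pick the Yamabe representative $h_Y\in [h]$ and rescale it so that its constant scalar curvature matches that of $(\mathbb{S}^{d-1}, g_{\mathbb{S}^{d-1}})$. Under this normalization the Yamabe constant becomes the pure area ratio
$$\mathrm{vol}(\partial X, h_Y) = \left(\frac{Y(\partial X,[h])}{Y(\mathbb{S}^{d-1},[g_{\mathbb{S}^{d-1}}])}\right)^{(d-1)/2}\,\mathrm{vol}(\mathbb{S}^{d-1}, g_{\mathbb{S}^{d-1}}).$$
Let $s$ be the geodesic defining function of $h_Y$, so that $g^+ = s^{-2}(ds^2 + g_s)$ with $g_s|_{s=0}=h_Y$. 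Direct integration of the volume element $s^{-d}\sqrt{\det g_s/\det h_Y}\,ds\,d\mathrm{vol}_{h_Y}$ over the collar $\{s<\varepsilon\}$ produces, as $\varepsilon\to 0^+$,
$$\mathrm{vol}_{g^+}(\{s<\varepsilon\}) = \frac{\mathrm{vol}(\partial X, h_Y)}{d-1}\,\varepsilon^{-(d-1)}\bigl(1+o(1)\bigr),$$
while $\mathrm{vol}_{g_{\mathbb{H}^d}}(B(R))\sim \mathrm{vol}(\mathbb{S}^{d-1})\,e^{(d-1)R}/(2^{d-1}(d-1))$. Combined with the standard Busemann-type fact that on a CCE manifold the distance in $g^+$ from any fixed $p$ to the level set $\{s=\varepsilon\}$ equals $-\log\varepsilon + O(1)$ as $\varepsilon\to 0^+$, this gives
$$\liminf_{R\to\infty}\frac{\mathrm{vol}_{g^+}(B(p,R))}{\mathrm{vol}_{g_{\mathbb{H}^d}}(B(R))} \;\ge\; \frac{\mathrm{vol}(\partial X, h_Y)}{\mathrm{vol}(\mathbb{S}^{d-1})},$$
and Bishop--Gromov monotonicity propagates the inequality to every $r>0$, yielding the claimed bound.

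The principal obstacle is sharpening the Busemann-type estimate $-\log\varepsilon+O(1)$ into a control whose $O(1)$ error does not contaminate the asymptotic volume ratio and is, crucially, independent of the base point $p$. This is exactly where the Einstein condition enters through the normal-form expansions (\ref{expansioneven})--(\ref{expansionodd}): they force geodesic rays in $(X,g^+)$ to approach the conformal infinity at the same rate as the model rays in hyperbolic space, up to a correction that washes out in the $R\to\infty$ limit, so that the comparison becomes tight.
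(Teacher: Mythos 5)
You should first note that the paper does not prove this lemma at all: it is quoted verbatim from Li--Qing--Shi \cite[Theorem 1.3]{LQS}, so the only comparison available is with that cited proof. Your upper bound and the reduction of the lower bound to the $r\to\infty$ limit are fine: $\operatorname{Ric}[g^+]=-(d-1)g^+$ gives Bishop--Gromov monotonicity of the ratio, which tends to $1$ as $r\to 0$, and this is indeed how the first inequality and the reduction go. The gap is in the core of the lower bound, namely the claim that the Busemann-type estimate $\mathrm{dist}_{g^+}(p,\{s=\varepsilon\})=-\log\varepsilon+O(1)$ suffices because the $O(1)$ ``washes out in the $R\to\infty$ limit.'' It does not: volumes grow like $e^{(d-1)R}$, so an additive $O(1)$ error in the radius becomes a multiplicative factor $e^{\pm(d-1)C}$ in the volume ratio that persists in the limit. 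Concretely, from $\mathrm{dist}(p,q)\le -\log s(q)+C$ you only get $B(p,R)\supset\{s\ge e^{C-R}\}$ near the boundary, hence $\liminf_R \mathrm{vol}_{g^+}(B(p,R))/\mathrm{vol}_{\mathbb{H}^d}(B(R))\ge 2^{d-1}e^{-(d-1)C}\,\mathrm{vol}(\partial X,h_Y)/\mathrm{vol}(\mathbb{S}^{d-1})$, and since $C$ is not controlled this can fall below the claimed sharp constant (note even in the exact model the factors $2^{d-1}$ only cancel because $\mathrm{dist}=-\log(s/2)+\cdots$ exactly).

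What the sharp argument actually requires is the precise asymptotics $\mathrm{dist}(p,q)=-\log\bigl(s(q)/2\bigr)+w_p(y)+o(1)$, which identifies the asymptotic volume ratio with $\mathrm{vol}(\partial X,h_p)/\mathrm{vol}(\mathbb{S}^{d-1})$ for the \emph{visual} (Busemann) representative $h_p=e^{2w_p}h_Y\in[h]$ determined by the base point $p$ --- not with $\mathrm{vol}(\partial X,h_Y)/\mathrm{vol}(\mathbb{S}^{d-1})$. The inequality $\mathrm{vol}(h_p)\ge\mathrm{vol}(h_Y)$ is then the real content and does not follow from Jensen or from your normalization of $h_Y$; in \cite{LQS} it is obtained essentially by a comparison argument along geodesics from $p$ (Riccati/Jacobi comparison using $\operatorname{Ric}=-(d-1)g^+$) showing the induced metric at infinity satisfies a scalar curvature bound $R_{h_p}\le (d-1)(d-2)$, after which the Yamabe constant, used as an \emph{infimum} tested on $h_p$, gives $Y(\partial X,[h])\le (d-1)(d-2)\,\mathrm{vol}(h_p)^{2/(d-1)}$ and hence the stated lower bound. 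In other words, the Yamabe constant must enter through a test-metric inequality for the visual representative, whereas your proposal uses it only to normalize $h_Y$ and never connects the collar volume of $h_Y$ to the geodesic-ball volume beyond the unjustified $O(1)$ claim; this missing step is exactly where the Einstein condition and positivity of $Y(\partial X,[h])$ do their work.
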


The last topic in this subsection concerns the injectivity radius estimates for manifolds with boundary. For our purpose we may always assume
that the geometry of the boundary is compact in the Cheeger-Gromov sense.

\begin{lemm} 
\label{bdy-injrad}
Suppose that $(X^d, g^+)$ is a conformally compact Einstein $d$-dimensional manifold with the conformal infinity of Yamabe constant $Y(\p
X, [h]) \ge Y_0 >0$. 
And suppose that the adapted metric $(X^d, g^*)$ has the intrinsic injectivity radius $i(\p X, h) \ge i_o>0$,  and that $ i_\p (X, g^*)\le  i_{\text{int}} (X, g^*)$. Then there is a constant  $C_\p  > 0$, depending on $i_0$ and independent of $Y_0$, such that 
\begin{equation}\label{Eq: bdy-injrad}
\max_X |Rm| (i_\p (X, g^*))^2  + i_\p (X, g^*) \ge C_\p,
\end{equation}
where $Rm$ is Riemann curvature of $g^*$.
\end{lemm}

The same proof in our earlier work \cite[Lemma 3.1]{CGQ} can be modified to establish this lemma, we will skip the proof here.

We get a similar estimate like Lemma \ref{bdy-injrad} for the lower bound of the injectivity radius.

\begin{lemm} 
\label{int-injrad}
Suppose that $(X^d, g^+)$ is a conformally compact Einstein $d$-dimensional manifold with the conformal infinity of Yamabe constant $Y(\p
X, [h]) \ge Y_0 >0$.  And suppose  that $(X^d, g^*)$ is the adapted metric associated with the Yamabe 
metric $h$ on the boundary such that the intrinsic injectivity radius $i(\p X, h) \ge i_o>0$, and that $ i_\p (X, g^*)\ge  i_{\text{int}} (X, g^*)$. Then there is a constant
$C_{\text{int}} > 0$, depending on $Y_0$ and $i_0$, such that 
\begin{equation}\label{Eq: int-injrad}
\max_X |Rm|(i_{\text{int}} (X, g^*))^2  + i_{\text{int}} (X, g^*) \ge C_{\text{int}},
\end{equation}
where $Rm$ is the Riemann curvature of $g^*$.
\end{lemm}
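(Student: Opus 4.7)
Proof plan. I would argue by contradiction in the same ``blow-up before blow-up'' spirit as the proof of Lemma \ref{bdy-injrad}, with the Li--Qing--Shi volume lower bound (Lemma \ref{Lem:LQS}) supplying the non-collapsing mechanism. Suppose the conclusion fails: there is a sequence of adopted metrics $(X_i^d, g_i^+, g_i^*)$ satisfying the hypotheses with
$$R_i^2 r_i^2 + r_i \longrightarrow 0, \qquad R_i^2 := \max_{X_i}|Rm|_{g_i^*},\quad r_i := i_{\text{int}}(X_i, g_i^*).$$
Choose $p_i \in X_i$ with $i(p_i, g_i^*) \leq 2 r_i$.

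The first step, parallel to \cite[Lemma 3.1]{CGQ}, is to show that the collapse occurs at depth $\gtrsim r_i$ inside $X_i$. Since by hypothesis $i_\partial(X_i, g_i^*) \geq r_i$, the boundary normal-exponential map is an embedded product of width $r_i$, and on this collar the injectivity radius at a point $p$ is comparable to $d_{g_i^*}(p, \partial X_i)$ together with the intrinsic injectivity radius $i(\partial X, h_i) \geq i_0$; were $d_{g_i^*}(p_i, \partial X_i) \ll r_i$, the collapse would be of boundary type and would fall under Lemma \ref{bdy-injrad}. Hence $d_{g_i^*}(p_i, \partial X_i) \geq c r_i$, and combining with \cite[Lemma 4.4]{CG}, which gives $\rho_i(x) \geq C \, d_{g_i^*}(x, \partial X_i)$ in the collar, one gets $\rho_i(p_i) \geq c' r_i$.

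Next I would transfer the volume bound of Lemma \ref{Lem:LQS} from $g_i^+$ to $g_i^*$. Since $|\nabla \rho_i|_{g_i^*} \leq 1$ by Lemma \ref{lem4.1}, on any $g_i^*$-ball $B_{g_i^*}(p_i, s)$ with $s \leq \rho_i(p_i)/2$ the factor $\rho_i$ is pinched between $\rho_i(p_i)/2$ and $3\rho_i(p_i)/2$, so $g_i^* \asymp \rho_i(p_i)^2 g_i^+$ and $d\text{vol}_{g_i^*} \asymp \rho_i(p_i)^d\, d\text{vol}_{g_i^+}$ there; combined with Lemma \ref{Lem:LQS} this yields
$$\text{vol}_{g_i^*}\bigl(B_{g_i^*}(p_i, s)\bigr) \geq C(Y_0)\, s^d, \qquad 0 < s \leq \rho_i(p_i)/2.$$
Rescale $\tilde g_i^* := r_i^{-2} g_i^*$; then $\tilde \rho_i := \rho_i/r_i$ is still $1$-Lipschitz in $\tilde g_i^*$ and $\tilde \rho_i(p_i) \geq c'$. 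The display becomes $\text{vol}_{\tilde g_i^*}(B_{\tilde g_i^*}(p_i, R)) \geq C(Y_0) R^d$ for $R \leq \tilde \rho_i(p_i)/2$, and by choosing the minimizing sequence $p_i$ at interior points of increasing depth one may arrange $\tilde \rho_i(p_i) \to \infty$, so this bound passes to every fixed scale $R > 0$.

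Finally, in the rescaled metrics we have $\max|Rm|_{\tilde g_i^*} = R_i^2 r_i^2 \to 0$, $i(p_i, \tilde g_i^*) \leq 2$, $d_{\tilde g_i^*}(p_i, \partial X_i) \geq c$, and the uniform non-collapsing $\text{vol}(B(R)) \geq C R^d$ at every scale. The interior $C^{1,\alpha}$ Cheeger--Gromov compactness (Lemma \ref{AKKLT}) then extracts a smooth complete flat pointed limit $(Y^d, g_\infty, p_\infty)$ satisfying $i(p_\infty, g_\infty) \leq 2$ and $\text{vol}_{g_\infty}(B_{g_\infty}(p_\infty, R)) \geq C R^d$ for all $R > 0$. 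Completeness plus $i(p_\infty) \leq 2$ forces $Y = \mathbb{R}^d/\Gamma$ with $\Gamma$ a nontrivial free properly discontinuous group of isometries, whose volume growth is strictly sub-Euclidean for large $R$ --- a contradiction. The main technical obstacle is the first step, namely guaranteeing both $d_{g_i^*}(p_i, \partial X_i) \gtrsim r_i$ and $\tilde \rho_i(p_i) \to \infty$; this relies on the precise interplay between the hypothesis $i_\partial \geq i_{\text{int}}$ and the collar estimate of \cite[Lemma 4.4]{CG}, and parallels the boundary version in Lemma \ref{bdy-injrad} and \cite[Lemma 3.1]{CGQ}.
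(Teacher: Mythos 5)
Your overall scheme --- argue by contradiction, rescale by $r_i=i_{\text{int}}(X_i,g_i^*)$, transfer the Li--Qing--Shi bound (Lemma \ref{Lem:LQS}) to $g_i^*$ through $\|\nabla\rho_i\|\le 1$ (Lemma \ref{lem4.1}) to rule out collapse, and then use rigidity of the flat blow-up limit --- is the scheme of the proof the paper points to (\cite[Lemma 3.3]{CGQ}), and your volume-transfer step (pinching $\rho_i$ on balls of radius at most $\rho_i(p_i)/2$) is correct. The genuine gap is exactly where you flag it, and neither of your two remedies works. First, you cannot dispose of the case $d_{g_i^*}(p_i,\partial X_i)\ll r_i$ by invoking Lemma \ref{bdy-injrad}: that lemma is proved under the complementary hypothesis $i_\p(X,g^*)\le i_{\text{int}}(X,g^*)$ and gives a lower bound involving the boundary injectivity radius; it does not rule out points of small injectivity radius sitting close to the boundary in the present regime $i_\p\ge i_{\text{int}}$. (With the naive convention that geodesics are cut by $\partial X$, every point near the boundary has small injectivity radius, so at this step one must also commit to the precise definition of $i_{\text{int}}$ used in \cite{AKKLT,CGQ}, which you never fix.) Second, the assertion that ``by choosing the minimizing sequence $p_i$ at interior points of increasing depth one may arrange $\tilde\rho_i(p_i)\to\infty$'' is not available: the points where the interior injectivity radius is nearly attained are dictated by the geometry. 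If they stay at bounded $\tilde g_i^*$-distance from the boundary, then $\tilde\rho_i(p_i)$ stays bounded as well (since $\rho_i\le d_{g_i^*}(\cdot,\partial X_i)$ by Lemma \ref{lem4.1}), your volume bound holds only up to a fixed scale, and the limit is a flat manifold \emph{with} totally geodesic boundary, so your final argument (complete boundaryless flat limit $\mathbb{R}^d/\Gamma$ with sub-Euclidean growth) does not apply.

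Concretely, the missing case is $d_{\tilde g_i^*}(p_i,\partial X_i)$ bounded: there the limit is flat with totally geodesic boundary which, because $i(\partial X,h_i)/r_i\to\infty$, is Euclidean $\mathbb{R}^{d-1}$, and the contradiction has to come from this half-space-type rigidity combined with the normalization $i(p_i,\tilde g_i^*)\le 2$ and the meaning of $i_{\text{int}}$, not from volume growth of a flat quotient. Moreover, even in the deep case $d_{\tilde g_i^*}(p_i,\partial X_i)\to\infty$ you still owe an argument that $\tilde\rho_i(p_i)\to\infty$: the collar estimate $\rho\ge C\,d(\cdot,\partial X)$ from \cite[Lemma 4.4]{CG} is only stated for $d(\cdot,\partial X)\le i_\p(X,g^*)$, and here $i_\p$ is only known to be $\ge r_i$, so the estimate alone does not control $\rho_i$ at depths much larger than $r_i$. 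Until these boundary-proximity issues are handled, the proof is incomplete.
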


The proof of the above lemma is also similar to the one of \cite[Lemma 3.3]{CGQ}. Here we omit the details.


\subsection{Interior regularity in all dimensions}\label{Sect:Int} 
The interior regularity estimates for CCE manifolds is relatively well known, we will provide the result here just for the sake of completeness. First we recall the definition of harmonic radius on a Riemannian manifold with boundary see \cite{Perales}: 

Assume $(X, \ g)$ is a complete Riemnnian $d$-dimensional manifold with the boundary $\p X$. A local coordinates 
$$
(x_1,x_2,\cdots,x_{d}): B(p, r) \to \Omega \subset \mathbb{R}^d
$$
is said to be harmonic if, 
\begin{itemize}
\item $\triangle x_i=0$ for all $1\le i\le d$ in $B(p, r) \subset X$, when $p\in X$ is in the interior;
\item $\Delta x_i = 0 $ for all $1\le i \le d$ in $B(p, r)\cap X$ and,  on the boundary $B(p, r)\cap \p X$,  $(x_2, x_3, \cdots,x_{d})$ is a harmonic
coordinate in $\p X$ at $p$ while $x_1 = 0$, when $p\in \p X$ is on the boundary.
\end{itemize}
For $\alpha\in (0,1)$ and $ M \in (1,2)$, we define the harmonic radius $r^{1,\alpha}(M)$ to be the biggest number $r$ satisfying the following properties:
\begin{itemize}
\item If $\text{dist}(p, \p X) > r$, there is a harmonic coordinate chart on $B(p, r)$ such that
\beq
\label{relation1}
M^{-2}\delta_{jk} \le g_{jk}(x) \le M^{2}\delta_{jk},
\eeq
and
\beq
\label{relation2}
r^{1+\alpha}\sup|x-y|^{-\alpha}|\p g_{jk}(x)-\p g_{jk}(y)|\le M-1
\eeq
in $\overline{B(p, \frac r2)}$.
\item If $p\in\p X$, there is a boundary harmonic coordinate chart on $B(p, 4r)$ such that \eqref{relation1} and \eqref{relation2}
hold in $\overline{B(p, 2r)}$. 
\end{itemize}

\begin{theo}\label{regularityinterior}
Suppose $(X^{d}, g^{+})$ is a conformally compact Einstein with the $C^{k-2,\gamma}$ adapted metrics  $g^*=\rho^2 g^+$ for $k\ge 2$ and $\gamma\in (0,1)$.  Assume that
\begin{enumerate}
\item Given $M>1$ and $\gamma\in (0,1)$ there exists some $r_0>0$ such that the harmonic radius $r^{1,\gamma}(M)\ge r_0$;
\item there exist positive constants $C,C_1>0$ such that  $\rho(x)\ge C_1$ provided $d_{g^*}(x,\p X)\ge C$;
\end{enumerate}
Then for all $x\in \bar X$ with $d_{g^*}(x,\p X)\ge C$ and for all $r\le r_1:=\min( r_0, C/2)$, we have
\beq
\label{est1int}
\|Ric_{g^*}\|_{C^{k,\gamma}(B(x,r/2))}
\le C(M,\gamma,r_0,C_1, k,\|Rm_{g^*}\|_{C^{k-2,\gamma}(B(x,r_1))};
\eeq
which yields also in harmonic coordinates
\beq
\label{est2int}
\|g^*\|_{C^{k+2,\gamma}(B(x,r/2))}\le C(M,\gamma,r_0,C_1, k, \|g^*\|_{C^{k,\gamma}(B(x,r_1))}).
\eeq
\end{theo}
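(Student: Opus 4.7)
My plan is to treat the theorem as an interior elliptic bootstrap for the coupled quasilinear system satisfied by $(g^*, \rho)$ in a harmonic chart of $g^*$ on $B(x, r_1)$. The key structural ingredient is that on the region $\{\rho \geq C_1\}$ the conformal Einstein condition reduces to a pair of uniformly elliptic second-order PDEs whose right-hand sides are smooth functions of the lower-order jets of $(g^*, \rho)$, so classical Schauder theory in harmonic coordinates yields the claimed gains in regularity.

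First, combining \eqref{relation3} with \eqref{relation4} eliminates $R[g^*]$ and produces the scalar equation
\[
\Delta_{g^*}\rho = -\frac{d+2}{2\rho}\bigl(1 - |\nabla\rho|^2_{g^*}\bigr),
\]
which is uniformly elliptic with $g^*$-coefficients and has a smooth right-hand side since $\rho \geq C_1 > 0$. Second, the standard harmonic-coordinate expression $Ric_{ij}[g^*] = -\tfrac12 g^{*ab}\partial_a\partial_b g^*_{ij} + B_{ij}(g^*, \partial g^*)$, combined with \eqref{relation5} (and \eqref{relation3} substituted for $R[g^*]$), yields the elliptic equation
\[
-\tfrac12 g^{*ab}\partial_a\partial_b g^*_{ij} = -(d-2)\rho^{-1}\bigl(\partial_i\partial_j\rho - \Gamma^k_{ij}\partial_k\rho\bigr) + \tfrac{4-d}{4(d-1)}R[g^*]\,g^*_{ij} - B_{ij}(g^*, \partial g^*),
\]
whose right-hand side involves at most second derivatives of $\rho$ and first derivatives of $g^*$.

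With these two equations in hand, the harmonic radius assumption supplies coordinates on $B(x, r_1)$ in which $g^*$ is a priori in $C^{1,\gamma}$ with norms controlled by $M$ and $r_0$; the curvature bound $\|Rm_{g^*}\|_{C^{k-2,\gamma}}$ together with the standard Ricci-in-harmonic-coordinates Schauder estimate then upgrades $g^*$ to $C^{k,\gamma}$, and an application of Schauder to the $\rho$-equation gives $\rho \in C^{k-2,\gamma}$ as starting data. Now alternate: given $(g^*, \rho) \in C^{m,\gamma}$ on a slightly larger ball, Schauder applied to the $\rho$-equation gains one derivative on $\rho$, and Schauder applied to the $g^*$-equation (whose right-hand side has regularity controlled by $\partial^2\rho$ and $\partial g^*$) then gains one derivative on $g^*$. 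Each step costs a definite fraction of the radius, but finitely many iterations, bounded a priori in terms of $k$, produce $\rho, g^* \in C^{k+2,\gamma}$ on $B(x, r/2)$; feeding this back into \eqref{relation5} gives the Ricci estimate \eqref{est1int}.

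The main obstacle is purely bookkeeping: tracking the radii through the finite bootstrap and verifying that every constant depends only on $(M, \gamma, r_0, C_1, k)$ and the initial curvature norm. Because the number of iterations is bounded a priori in terms of $k$, the ball shrinks only by a universal factor that can be absorbed by the choice of $r_1$, and the dependence on $C_1$ enters solely through the $\rho^{-1}, \rho^{-2}$ coefficients, which remain uniformly bounded by hypothesis (2). No new analytic ingredient beyond the conformal and $Q$-flat identities of Subsection \ref{Subsect:rho} and classical interior Schauder theory is required.
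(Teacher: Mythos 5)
Your argument is essentially correct, but it takes a genuinely different route from the paper. The paper works at the level of curvature: it uses the $Q$-flat equation \eqref{Q-eq} together with interior Schauder theory to upgrade the scalar curvature to $C^{k,\gamma}$, then invokes the conformally-Einstein identity \eqref{Bachex} (with Lemma \ref{lem4.1} and \eqref{relation4}--\eqref{relation5}, and $\rho\ge C_1$ in the interior) to bound the Bach tensor in $C^{k-2,\gamma}$, and finally reads \eqref{Bach-eq-1} as a second-order elliptic equation for $Ric_{g^*}$ to obtain \eqref{est1int}; harmonic coordinates (Lemma \ref{AKKLT}) enter only at the last step to convert the Ricci bound into \eqref{est2int}. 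You instead bypass the $Q$-curvature and Bach/Cotton machinery entirely and work at the level of the pair $(g^*,\rho)$: eliminating $R[g^*]$ between \eqref{relation3} and \eqref{relation4} gives your uniformly elliptic scalar equation for $\rho$ (nondegenerate precisely because $\rho\ge C_1$), and \eqref{relation5} turns the harmonic-coordinate expression for $Ric$ into a quasilinear elliptic equation for $g^*$ whose right-hand side involves only lower-order jets of $(g^*,\rho)$; alternating Schauder estimates then bootstraps both unknowns, and \eqref{est1int} follows a posteriori. Your approach is more elementary and self-contained for the interior (it uses only the Einstein condition written conformally, and in effect re-proves the remark that $g^*$ is smooth in the interior), while the paper's route has the advantage of running in parallel with the boundary regularity argument of Sections \ref{Sect:bdy1}--\ref{Sect:bdy2}, where the $\rho^{-1}$ coefficients degenerate and only the curvature-level equations survive. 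One point to tighten in your bookkeeping: the interior Schauder estimate for the $\rho$-equation carries a $\|\rho\|_{C^{0}}$ term, and $\sup_{B}\rho$ is not among the admissible constants in \eqref{est1int}; this is harmless but should be handled explicitly, e.g.\ by estimating $\rho-\rho(x)$ (whose sup on $B(x,r_1)$ is at most $r_1$ since $|\nabla\rho|\le 1$) or by using \eqref{relation5} to control $\rho^{-1}\nabla^2\rho$ directly, so that only $C_1^{-1}$ and curvature norms enter the final constants. With that adjustment, your proof delivers the same conclusions \eqref{est1int} and \eqref{est2int} with the stated dependence of constants.
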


\begin{proof}

In view of equation (\ref{Q-eq}), it follows from  \cite[Theorem 6.2]{GT}, that the estimate (\ref{est1int}) holds for the scalar curvature since $Rm_{g^*}\in {C^{k-2,\gamma}}$, that is, $R\in C^{k,\gamma}$. Using Lemma \ref{lem4.1} and the formula (\ref{Bachex}), (\ref{relation5}) and (\ref{relation4}),  the Bach tensor $B\in {C^{k-2,\gamma}}(B(x,r))$.  Recall the elliptic system (\ref{Bach-eq-1}). By the classical regularity theory  \cite[Theorem 6.2]{GT}, we derive $Ric_{g^*}\in {C^{k,\gamma}}(B(x,3r/4))$ and the estimate  (\ref{est1int}) holds.  Finally, the estimate (\ref{est2int}) comes from Lemma \ref{AKKLT}. 
\end{proof}
\begin{RK} 
We notice the metric $g^*$ is smooth in the interior.
\end{RK}


\section{Boundary regularity when dimension is even}\label{Sect:bdy1} 
For the interior regularity, we can use the conformal changes for the extended obstruction tensors \cite{G09}, which  in the special case of fourth-order tensor agrees with the Bach tensor. The conformal transformation law for the extended obstruction tensors involves both the conformal factor and its gradient. Hence, the $C^1$-estimates of the conformal factor helps us to handle the regularity away from the boundary. However, to obtain the desired regularity result for the class of adapted metrics on the boundary, in our proof we 
use the fact that  such metrics satisfy some elliptic PDE  for AHE manifolds $X^d$ when the dimension $d$ is even. More precisely,  when
$d$ is even, in \cite{GH, FG12},
they define a conformally invariant obstruction tensor $\mathcal{O}_{ij}$ of the form
\beq
\label{obstruction1}
 \mathcal{O}_{ij}=(\triangle)^{(d-4)/2} \frac{1}{d-3}\nabla^k\nabla^l W_{ikjl}+lots=(\triangle)^{(d-4)/2} B_{ij}+lots,
\eeq
where $B_{ij}$ denotes the fourth-order Bach tensor. The obstruction tensor $\mathcal{O}_{ij}$ vanishes on Einstein metrics hence on any metric conformal to an Einstein metric (e.g \cite{FG12}), thus we have the metric satisfies the elliptic equation
\beq
\label{obstruction1bis}
(\triangle)^{(d-4)/2} B_{ij}+lots=0.
\eeq

For example, in the special case when $d=6$, we have (e.g \cite{FG12})
\beq
\label{obstruction}
\begin{array}{ll}
{B_{ij,k}}^k=&2W_{kijl}B^{kl}+4{A_k}^k B_{ij}-8A^{kl}\mathcal{C}_{(ij)k,l}\\
&+4{\mathcal{C}_{ki}}^l{\mathcal{C}_{lj}}^k-2{\mathcal{C}_i}^{kl}\mathcal{C}_{jkl}-4{A^k}_{k,l}{\mathcal{C}_{ij}}^l+4W_{kijl}{A^k}_mA^{ml}, 
\end{array}
\eeq
where $2\mathcal{C}_{(ij)k}=\mathcal{C}_{ijk}+\mathcal{C}_{jik}$.

Our main result in this section is that the elliptic equation (\ref{obstruction1bis}) 
helps us to gain the regularity of the compactified metric $g^{*}$.
This is a key step which will 
lead to the proof of the
compactness result in Theorem
\ref{maintheorem3bis}.
More precisely we have the following result.

\begin{lemm}\label{bdy1}
Suppose $(X^{d}, g^{+})$ is conformally compact Einstein with positive conformal infinity  $({\p X}, [h])$ with dimension d even and $d\ge 6$.  Assume further that the adapted metric $g^*$ as defined in 
Lemma 2.2 is in the $C^{d-2}$ space satisfying
\begin{enumerate}
\item $\|Rm_{g^*}\|_{C^{d-4}}\le 1$;
\item Given $M>1$ and $\gamma\in (0,1)$ there exists some $r_0>0$ such that the harmonic radius $r^{1,\gamma}(M)\ge r_0$ (The harmonic radius $r^{1,\gamma}(M)$ was introduced in Section \ref{Sect:prelim});
\item $\|h\|_{C^{d-1,\gamma}}\le N$ for some positive constants $N>0$ and $\gamma\in (0,1)$.
\end{enumerate}
Then, there exists some positive constant $C$  such that  for all $x\in \bar X$ and for all $r\le r_0$, we have
\beq
\label{est1}
\|Ric_{g^*}\|_{C^{d-3,\gamma}(B(x,r/2)\cap \bar X)}
\le C(M,\gamma,r_0,d, \|Rm_{g^*}\|_{C^{d-4}(B(x,r_0)\cap \bar X)},  \|h\|_{C^{d-1,\gamma}(B(x,r_0)\cap \p X)}).
\eeq
As a consequence, we have
\beq
\label{est2}
\|g^*\|_{C^{d-1,\gamma}(B(x,r/2)\cap \bar X)}\le C(M,\gamma,r_0,d,  \|Rm_{g^*}\|_{C^{d-4}(B(x,r_0)\cap \bar X)}, \|h\|_{C^{d-1,\gamma}(B(x,r_0)\cap \p X)}).
\eeq
\end{lemm}
\vskip .2in

\begin{proof}
We will use the harmonic coordinate and boundary conditions as stated in Lemma \ref{bdy}.


{To establish the estimates in (\ref{est1}), we observe that in view of equation (\ref{Q-eq}), and that $\|Rm_{g^*}\|_{C^{d-4}}\le 1$ holds, it follows from \cite[Theorem 6.6]{GT}, that the scalar curvature $R$ is in the $ C^{d-3,\gamma}$.}

Applying Lemma \ref{bdybis1}, the restriction of the Schouten tensor $A$ and the Weyl tensor $W$ on the boundary also are in the $C^{d-3,\gamma}$.
 
We now estimate the fourth-order Bach tensor $B$ via  the elliptic system 
of obstruction tensor equations (\ref{obstruction1}) or (\ref{obstruction}).
Thus via the classical regularity theory for the Laplacian operator (\cite[Theorem 8.32]{GT}) that  $B$ is in the $C^{1,\gamma}$ (when $d=6$) or more generally $B$ is in the $C^{d-5,\gamma}$ when $d>6$.

Applying the equation (\ref{Bach-eq-1}) and \cite[Theorem 6.6]{GT} again,  the estimate (\ref{est1}) holds for the Ricci curvature. Thus it follows from Lemma \ref{AKKLT} that estimate in (\ref{est2}) also holds.

\end{proof}

\begin{RK} In Lemma \ref{bdy1}, 
\begin{itemize}
 \item We can similarly obtain high order estimates  of $g^*$, that is, if we assume $h \in C^{k,\gamma}$ with $k\ge d-1$, then $g^*$ is in $C^{k,\gamma}$.
 \end{itemize}
  
\end{RK}


\section{Boundary regularity in all dimensions}\label{Sect:bdy2} 
For conformally compact Einstein manifolds of dimension $d$, when $d$ may not be even, we will now use a different strategy to gain boundary regularity. Namely we will use the method of ``gauged Einstein equations" as in the work of Chru\'sciel-Delay-Lee-Skinner \cite{CDLS} to derive our estimates.   The eventual goal is to gain the regularity of the compactified metric through the choice of a suitable local gauge, from there we gain the regularity
of the Weyl and Cotton tensor near the conformal infinity, which in turn implies the regularity of the fourth-order Bach tensor. 

This section is organized as follows. In Subsection \ref{GaugedEinsteinequation}, 
we present the concept of local gauge for Einstein metric introduced by Biquard \cite{Biquard1}, and derive some $C^{3, \alpha}$ 
regularity of the defining function $\rho$ using the adapted harmonic coordinate  introduced in Lemma \ref{lemmaSection3.1},  from which we derive the closeness of the metric  $ g^+$ related to the approximated metric $t^+$ in Lemma \ref{lemma1Section3.1}.
In Subsection \ref{Localgauge},  we  first establish some uniform estimates for the linearized operator of the gauge condition in Lemma \ref{lemma2bisSection3.1}, then apply the result to prove the existence of some suitable local gauge in the neighborhood of any point on the conformal infinity and derive the estimates for such local gauge in Lemma \ref{lemma2Section3.1}.
In Subsection \ref{varepsilon-regularity}, we  first establish some uniform estimates for the linearized operator with respect to the first variable of  the gauged Einstein functional in Lemma \ref{lemma3bisSection3.1} and derive some $\varepsilon$-regularity result of the gauged metric in Lemma \ref{lemma3Section3.1}, which leads to the  regularity in a neighborhood of any point on  the conformal infinity in Lemma \ref{lemma3bis2Section3.1}. In Subsection \ref{Regularity of  $g^*$}, we apply the estimates in Subsection \ref{varepsilon-regularity} to derive estimates of the Weyl and Cotton tensor of the compactified metric $g^*$ in Lemma \ref{lemma4bisSection3.1}, and after passing such information,
 to obtain the $C^{1, \lambda}$ estimates of $Rm[g^*]$ in a local neighborhood of the conformal infinity in Lemma \ref{lemma4Section3.1},   which is the main result in this section.

\subsection{Gauged Einstein equation}\label{GaugedEinsteinequation}
In \cite{CDLS}, the authors use gauged Einstein equation to study the regularity  issue of $g^+$ up to a diffeomorphism.  Later on Biquard-Herzlich have established  \cite{BiquardHerzlich} a local version of the result. We now briefly describe the set-up of their method, then indicate the modifications to apply their method to our setting.

Let $Z_R(p)$ denote a domain defined by (\ref{B1}) in Appendix B. We consider the nonlinear functional introduced by Biquard \cite{Biquard1} defined on the $d$-dimensional open set $Z_R(p)$ with $p\in\p X$ for two asymptotically hyperbolic metrics $g^+$ and $k^+$.
\beq
\label{gauged Einstein equationbis}
 F(g^+,k^+):=Ric[g^+]+(d-1)g^+- \delta_{g^+}^*(B_{k^+}(g^+)),
\eeq
where $B_{k^+}$ is a linear differential operator on symmetric (0,2) tensor,  which is the infinitesimal version of the harmonicity condition
$$
B_{k^+}(g^+):=\delta_{k^+} g^++\frac12 \mathfrak{d}\tr_{k^+}(g^+).
$$
Here,  $\delta$ denotes the divergence operator of $2$-tensors, $\delta^*$ the symmetrized covariant derivative of the vector field and $\mathfrak{d}$ the exterior derivative. 

We now recall the Lichnerowicz Laplacian $\triangle_L$ on symmetric $2$-tensors given by.
$$
\triangle_L:=\nabla^*\nabla +2\overset{\circ}{Ric}[k^+]-2\overset{\circ}{Rm}[k^+]; 
$$
where 
$$
\overset{\circ}{Ric}[k^+](u)_{ij}=\frac{1}{2}(R_{im}[g^+]{u_j}^m+R_{jm}[k^+]{u_i}^m), 
$$
and
$$
\overset{\circ}{Rm}[k^+](u)_{ij}=R_{imjl}[k^+]u^{ml}.
$$
We have for any asymptotically hyperbolic metrics $k^+$
$$
D_1 F(k^+,k^+)=\frac12(\triangle_L+2(d-1)),
$$
where $D_1$ denotes the differentiation of $F$ with respective to its first variable. 

Recall $k^+$ is an asymptotically hyperbolic (AH) metric on $X$ if $k^+$ is a conformally compact metric on $X$ such that for some compactified metric $k=\varphi^2k^+$ there holds $\|\nabla\varphi\|\equiv 1$ on $\p X$. \\
It is clear that for any asymptotically hyperbolic Einstein metrics $g^+$,
$$
 F(g^+, g^+)=0.
$$
Suppose $(X^{d}, {\p X}, g^{+})$ is a conformally compact Einstein manifold  of  dimension $d\ge 4$ with a conformal infinity  $({\p X}, [h])$ of 
the positive Yamabe type. Assume that our adapted metrics $g^*$ is in the $C^3$ and that we have 
\begin{enumerate}
\item[(H1)] $\|Rm_{g^*}\|_{C^{0}}\le 1$;
\item[(H2)] there exists some $r_0>0$ such that the injectivity radius $i_{\text{int}}(X) \ge r_0,  \quad  i_\p (X) \ge 2r_0, \quad i(\p X) \ge r_0$;
\item[(H3)] $\|h\|_{C^{6}}\le N$ for some positive constants $N>0$.
\end{enumerate}

Hence, we can identify $\{p\in \bar X, \rho(p)\le r_1\}= [0,r_1]\times \p X \subset \{d_{g^*}(p,\p X)\le r_0\}$ for some $r_1>0$ (we could decrease $r_1$ if necessary) as a submanifold with boundary. We consider a $C^{4}$ AH metric on $ [0,r_1/2]\times \p X$ and its compactification:
$$
t^+=\rho^{-2}t, \; t=d\rho^2+h+\rho^2 h^{(2)}, 
$$
where $h^{(2)}$  is the Fefferman-Graham expansion term and intrinsically determined by the boundary metric $h$. Given $2R<r_1/2$, we look for a local diffeomorphism $\Phi:Z_R(p)\to Z_{2R}(p)$ such that $\Phi^*g^+$ solves the gauged Einstein equation in $Z_{R/2}(p)$
\beq
\label{gauged Einstein equation}
 F(\Phi^*g^+,t^+)=0.
 \eeq
We divide  the boundary  $\p Z_R(p):= \p^\infty Z_R(p)\cup  \p^{int} Z_R(p)=(\{\rho=0\}\cap \p  Z_R(p))\cup (\{\rho>0\}\cap \p Z_R(p))$.  
Given a CCE $g^+$ and a regular AH $t^+$ with the same conformal infinity on the local boundary $\Psi_{p,R}(Y_1^\infty) = \partial^\infty Z_R (p)$, 
we try to find a local diffeomorphism $\Phi:Z_R(p)\to Z_{2R}(p)$ such that the gauged condition is satisfied in $Z_{R/2}(p)$ up to the 
diffeomorphism $\Phi$ fixing the boundary $\p^\infty Z_R(p)$, that is
$$
B_{t^+}(\Phi^*g^+)=0\mbox{ in } Z_{R/2}(p).
$$
Here $Y_1$ is defined by (\ref{B2}) in Appendix B and $ Y_1^\infty:=\bar  Y_1\cap \{(0,x')\}$. 
Thus, the gauged Einstein equation (\ref{gauged Einstein equation}) is satisfied in $Z_{R/2}(p)$.
 We know $\rho\in C^{3,\gamma}_{loc}$ for all $\gamma\in (0,1)$  
under the adapted harmonic coordinates for the metric $g^*$.  More precisely, we have the following result.
\begin{lemm} 
\label{lemmaSection3.1} 
Under the assumptions (H1)-(H3), there exists some positive constant $C$ depending on $\gamma$ but independent of  $p\in \p X$ (and the sequence of the metrics) such that for all $p\in \p X$ under the adapted harmonic coordinates
$$
\|\rho\|_{C^{3,\gamma}(Z_{r_1/2}(p))}\le C.
$$
\end{lemm}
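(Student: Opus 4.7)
The plan is to realize $\rho$ as the solution of a linear elliptic Dirichlet problem in the adapted harmonic chart and then invoke boundary Schauder theory. From relation (\ref{relation4}), $\rho$ satisfies
\[
\Delta_{g^*}\rho \;=\; -\frac{d+2}{4(d-1)}\,R[g^*]\,\rho \quad \text{in } Z_R(p), \qquad \rho\big|_{\partial X\cap Z_R(p)} \;=\; 0,
\]
and Lemma \ref{lem4.1} supplies the a priori bound $|\nabla\rho|_{g^*}\le 1$. In an adapted harmonic chart $(x^1,\dots,x^d)$ for $g^*$ with $\{x^1=0\}=\partial X\cap Z_R(p)$, the principal part becomes simply $g^{*\,ij}\partial_i\partial_j$, so we are dealing with a linear elliptic equation with zeroth-order coefficient $-\tfrac{d+2}{4(d-1)}R[g^*]$ and vanishing Dirichlet data on a flat piece of boundary.

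For $C^{3,\gamma}$ boundary Schauder theory it suffices that both the principal coefficients $g^{*\,ij}$ and the zeroth-order coefficient $R[g^*]$ lie uniformly in $C^{1,\gamma}$. The first fact follows from the hypothesis $\|Rm[g^*]\|_{C^0}\le 1$ together with the injectivity radius lower bounds and Lemma \ref{AKKLT}, which deliver uniform $C^{1,\gamma}$ control of $g^*$ in the chart for every $\gamma\in(0,1)$. For the second fact I would exploit the $Q$-flat equation (\ref{Q-eq}): it is a semilinear Poisson equation for $R[g^*]$ whose right-hand side is polynomial in $R$ and $Ric$, and whose boundary trace equals $\tfrac{2(d-1)}{d-2}\hat R$ by Lemma \ref{bdy}, hence lies in $C^{4}$ thanks to the assumption $h\in C^{6}$. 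A boundary Schauder bootstrap then lifts $R[g^*]$ to $C^{1,\gamma}$ with a uniform bound.

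With these two uniform inputs, classical Schauder estimates up to the boundary for the linear Dirichlet problem for $\rho$ yield $\|\rho\|_{C^{3,\gamma}(Z_{r_1/2}(p))}\le C$, where $C$ depends only on the a priori data and $\gamma$, and is independent of $p\in\partial X$ since the harmonic radius, injectivity radius, and underlying Schauder constants are all controlled uniformly in $p$ by hypothesis.

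The main obstacle is the boundary Schauder step for $R[g^*]$: running Schauder theory up to $\partial X$ for the semilinear $Q$-flat equation requires the coefficients (involving $g^*$ and its derivatives) and the boundary trace to be controlled in matched Hölder classes, and it is precisely the combination $\|Rm\|_{C^0}\le 1$ with the positive Yamabe boundary and $h\in C^6$, together with the intrinsic boundary identities of Lemma \ref{bdy} (and, for later iterations toward higher regularity, Lemma \ref{bdybis1}), that lets the scheme close consistently and produce $C$ independent of the point $p$.
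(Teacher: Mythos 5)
Your proposal is correct and follows essentially the same route as the paper: the scalar curvature is first controlled in $C^{1,\gamma}$ up to the boundary from the $Q$-flat equation (\ref{Q-eq}) (the paper cites the global $C^{1,\alpha}$ estimate of Gilbarg--Trudinger, with the boundary trace of $R$ supplied by Lemma \ref{bdy} and the regularity of $h$), and then (\ref{relation4}) together with Lemma \ref{lem4.1} is treated as a linear Dirichlet problem for $\rho$ in the adapted harmonic chart, so boundary Schauder theory gives the uniform $C^{3,\gamma}$ bound. The extra detail you supply (uniform $C^{1,\gamma}$ control of $g^{*}$ from $\|Rm\|_{C^{0}}\le 1$, the injectivity radius bounds and Lemma \ref{AKKLT}, and the two-step bootstrap $\rho\in C^{2,\gamma}\Rightarrow R\rho\in C^{1,\gamma}\Rightarrow\rho\in C^{3,\gamma}$) is exactly what the paper's terse argument implicitly uses.
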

\begin{proof}
By the classical elliptic regularity \cite[Theorem 8.33]{GT}, it follows from (\ref{Q-eq}) that the scalar curvature $R\in C^{1,\gamma}_{loc}$ and we have
$$
\|R\|_{C^{1,\gamma}(Z_{r_1}(p))}\le C
$$ 
for all $p\in \p X$ and for all $\gamma\in (0,1)$. Thanks to (\ref{relation4}) and Lemma \ref{lem4.1}, we infer that $ \rho$ is $C^{3,\gamma}$ smooth in  $ Z_{r_1/2}(p)$ under the adapted harmonic coordinates for the metric $g^*$, and 
$$
\|\rho\|_{C^{3,\gamma}(Z_{r_1/2}(p))}\le C.
$$
Therefore, we {established} the desired results.
\end{proof}

\begin{RK} 
In addition, under the assumptions that the metric is in  $C^{2,\gamma}$ space and the scalar curvature is in the $C^{2,\gamma}$ space, there holds $\rho$ is in the $C^{4,\gamma}$ space under the adapted harmonic coordinates for the metric $g^*$. In such case,  we have considered the partial differential derivatives for the $C^{4,\gamma}$ norm, not the covariant derivatives.
\end{RK}

We could identify the neighborhood $\{p\in X |\rho(p)\le r_1/2\}$ of $\p X$ in $X$ as $ [0,r_1/2]\times \p X$. In fact, let $(\theta^2, \cdots, \theta^{d})$ be the harmonic chart of $\p X$. We extend them as harmonic functions $(x^2, \cdots, x^{d})$  in X so that a local chart of  $\{p\in X|  \rho(p)\le r_1/2\}$ could be given by $(\rho,x^2, \cdots, x^{d})$.  In view of Lemma \ref{lemmaSection3.1} , such chart is $C^{3, \gamma}$ compatible with the harmonic coordinates of $X$. 
Thus, recall the $C^{4}$ compacitified AH manifold on $ [0,r_1/2]\times \p X$
\begin{equation}
\label{tequation}
t=d\rho^2+h+\rho^2 h^{(2)},\; t^+=\rho^{-2}t.
\end{equation}
We suppose for $t$,  one has $  i_\p (X) \ge 2r_1, \quad i(\p X) \ge r_1$ ( we could decrease $r_1$ if necessary). We consider $t^+$ as a reference AH metric with the given conformal infinity $h$. For simplicity, we drop the index $i$ for the family of metrics $t_i$ and $t_i^+$ if there is no confusion. Recall near the boundary (in $ [0,r_1/2]\times \p X$), $t_i^+$ is a family  of class $C^{4}$  AH manifolds, and moreover the family of metrics $t_i$ is compact in the $C^{3,\gamma}$-Cheeger-Gromov's topology in $Z_R(p)$ for all $R<r_1/2$, for any $p\in \p X$ and  for  all $\gamma\in (0,1)$.
We define a map $H_v : Z_R(p) \rightarrow X$ by
$$H_v(q)=exp_q(v(q)),$$
where exp denotes the Riemannian exponential map of $t^+.$ It is showed \cite{CDLS} that $H_v$ is differmorphism if $v$ is sufficiently small, and  by \cite[Lemma 4.1]{CDLS}  it extends to a homeomorphism of $Z_R(p)$ fixing the boundary at infinity
pointwise if $v$ is small in the $C_\delta^{1,0}(Z_R(p); TX)$ for $\delta> 0$. \\

Let $\Sigma^2$ denote the bundle of symmetric covariant 2-tensor over $X$. Let  $\varphi_R$ be the cut-off function in $\overline{X}$ such that
$${ \mathrm{supp}}\,\varphi_R\in Z_{R}(p),\ \varphi_R\equiv 1 \ on\ Z_{\frac{R}{2}}(p),\|\varphi_R\|_{C^{k,\lambda}_{k+\lambda}(Z_{R}(p))}\leq C_0R^{-k-\lambda},\, \forall 0\le k\le 2,\,\forall \lambda\in (0,1).$$
We set $g^+_\varphi=t^+ + \varphi (g^+-t^+)$. 

In the steps below, we will try to find a local gauge $H_v$  such that
\beq
\label{Gaugecondition}
B_{t^+}((H_v)^*g^+_\varphi)=0\mbox{ in } Z_{R}(p).
\eeq
The linearized operator on $v$ is $B_{t^+} (\delta_{t^+})^* = \frac 12((\nabla )^*\nabla  -Ric[t^+] )$ which is an isomorphism from $C^{k+2,\lambda}_\delta$ into $C^{k,\lambda}_\delta$, provided $\delta\in (-1,d)$.\\

\begin{lemm} 
\label{lemma1Section3.1} 
Under the assumptions (H1)-(H3), there exists some positive constant $R_0<r_1/2$ independent of $p\in \p X$ (and the sequence of the metrics) such that for all $p\in \p X$, we have  
$$
\begin{array}{ll}
i)& g=t+O(\rho^\lambda)\;\; \forall \lambda\in (0,1);\\
ii) & g^+-t^+\in C^{1, \lambda}_{1+\lambda}\; \forall 0<\lambda<1.  \mbox{ Furthermore, for any }\tilde \lambda\in (\lambda,1), \mbox{there exists some  } \\
&C>0, \mbox{ such that  }\\
&\| g^+-t^+\|_{C^{1,\lambda}_{1+\lambda}(Z_{R_0}(p))}\le CR_0^{\tilde\lambda-\lambda}.
\end{array}
$$
\end{lemm}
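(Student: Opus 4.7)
The plan is to control $g^*-t$ through its Taylor data at $\partial X$ and then translate to the weighted $g^+$-norm via the standard conversion for weighted tensor bundles. For part (i), the first step is to show that $g^*$ and $t$ have the same boundary values in the adapted chart $(\rho,x^2,\ldots,x^d)$. The model metric $t = d\rho^2 + h + \rho^2 h^{(2)}$ has explicit boundary components $t_{11}|_{\partial X}=1$, $t_{1\alpha}|_{\partial X}=0$, $t_{\alpha\beta}|_{\partial X}=h_{\alpha\beta}$. For $g=g^*$, identity (\ref{relation3}) combined with the uniform bound on $R[g^*]$ (from Lemma \ref{lem4.1} and assumption (1)) gives $|d\rho|^2_{g^*} = 1 + O(\rho^2)$, hence $g^{*11}|_{\partial X}=1$; the orthogonality of $\nabla\rho$ to $T\partial X$ yields $g^{*1\alpha}|_{\partial X}=0$, equivalently $g^*_{1\alpha}|_{\partial X}=0$ by block inversion; and the normalization $g^*|_{\partial X}=h$ gives $g^*_{\alpha\beta}|_{\partial X}=h_{\alpha\beta}$. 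Combined with the $C^3$ regularity of $g^*$ and the $C^{3,\gamma}$ control of $\rho$ from Lemma \ref{lemmaSection3.1}, a coordinate-wise Taylor expansion yields $g^*-t=O(\rho)$, giving the $O(\rho^\lambda)$-bound of (i).

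For (ii) the Taylor expansion must be pushed one order further. By Lemma \ref{bdy}(1), $\partial X$ is totally geodesic for $g^*$, so $\partial_\rho g^*_{\alpha\beta}|_{\partial X}=0$. Expanding (\ref{relation3}) and (\ref{relation5}) to the next order in $\rho$, and using Lemma \ref{fgmetric} to write the adopted defining function as $\rho = r + O(r^3)$ for a geodesic defining function $r$, one argues $\partial_\rho g^*_{11}|_{\partial X}=0$ and $\partial_\rho g^*_{1\alpha}|_{\partial X}=0$. On the $t$-side, $\partial_\rho t_{ij}|_{\partial X}=0$ is immediate from the explicit form of $t$. With the $C^{1,\lambda'}$-regularity of $g^*$ near the boundary (available from Lemma \ref{lemmaSection3.1} together with interior Schauder estimates applied to the Q-flatness equation (\ref{Q-eq})), second-order Taylor expansion yields $g^*-t=O(\rho^{1+\lambda'})$ in coordinate norm for any $\lambda'<1$, with the remainder $C^{0,\lambda'}$-controlled by a constant depending only on the prescribed data.

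The translation to weighted $g^+$-norms is routine: applying the weight identity $|T|_{g^+}=\rho^2|T|_{g^*}$ with $T=g^+-t^+=\rho^{-2}(g^*-t)$ gives $|g^+-t^+|_{g^+}=|g^*-t|_{g^*}=O(\rho^{1+\lambda'})$; testing against the target weight $\rho^{1+\lambda}$ on $Z_{R_0}(p)$ produces the prefactor $R_0^{\lambda'-\lambda}$. The first-derivative part of $C^{1,\lambda}_{1+\lambda}$ is obtained by differentiating the Taylor remainder once and reapplying the same weight identity (with bundle weight increased by one), combined with interior Schauder estimates from Theorem \ref{regularityinterior}. The main obstacle is verifying the vanishing of $\partial_\rho g^*_{11}$ and $\partial_\rho g^*_{1\alpha}$ at $\partial X$ in these non-geodesic adapted coordinates: Lemma \ref{bdy}(1) controls directly only the tangential-tangential block, and the remaining components require combining the asymptotic expansion of $\rho$ from Lemma \ref{fgmetric} with the curvature identities (\ref{relation3}) and (\ref{relation5}) pushed to one higher order in $\rho$.
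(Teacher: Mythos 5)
Your proposal follows essentially the same route as the paper's proof: working in the adapted chart $(\rho,x^2,\dots,x^d)$, you match the boundary values and the first normal derivatives of $g^*$ and $t$ using (\ref{relation3}), the Hessian identity (\ref{relation5}) and the totally geodesic boundary, and then conclude by Taylor expansion with the uniform $C^{1,\lambda}$ bounds together with the weight bookkeeping $|g^+-t^+|_{g^+}=|g^*-t|_{g^*}$, which is exactly how the paper produces the factor $R_0^{\lambda'-\lambda}$. The only difference is cosmetic: the paper gets $\partial_\rho g^*_{11}=\partial_\rho g^*_{1\alpha}=0$ directly from $\nabla^2\rho=O(\rho)$ and $|\nabla\rho|^2=1+O(\rho^2)$ (rather than routing through the expansion $\rho=r+O(r^3)$ of Lemma \ref{fgmetric}), so your argument is correct to the same standard of detail as the paper's own proof.
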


\begin{proof}
We consider the boundary harmonic chart $(x^1,\cdots,x^d)$. Let $\phi$ be a chart such that $\phi^{-1}(q)=(\rho(q),x^2,\cdots\cdots,x^d)$. We use the above chart $\phi$. We claim on the boundary $g_{1\gamma}=0$, $\p_1 g_{ij}=0$ for all $\gamma=2,\cdots,d$ and for all $i,j=1,,\cdots,d$. For the first one, we note on the boundary
$$
g(\p_1,\p_\gamma)=\p_\gamma \rho=0,
$$
since $\rho$ vanishes on the boundary $\p X$.  Using (\ref{relation3}), $g_{11}=g(\p_1,\p_1)=1+O(\rho^2)$ so that $\p_1 g_{11}=0$ on $\p X$.\\ 
Again $g(\p_1,\p_1)\equiv1$ on the boundary $\p X$, which yields $g(\nabla_{\p_\gamma}{\p_1},\p_1)=0 $ on the boundary. Together with the fact the boundary is totally geodesic, we get $ \nabla_{\p_\gamma}{\p_1}=0$ on the boundary. On the other hand, by  (\ref{relation5}), we deduce
 $\p_1 g_{1\gamma}=\p_1\p_\gamma \rho=D^2 \rho (\p_1,\p_\gamma)-(\nabla_{\p_1}{\p_\gamma})\rho=D^2 \rho (\p_1,\p_\gamma)=0$ on the boundary. Similarly, it follows from (\ref{relation5}) that $\nabla_\alpha\nabla_\beta\rho=0$ on the boundary so that the Christofell symbols $\Gamma^1_{\alpha\beta}=0$ on the boundary, that is,
$0=\frac12(\p_\beta g_{\alpha 1}+\p_\alpha g_{1\beta}-\p_1 g_{\alpha\beta})=-\frac12\p_1 g_{\alpha\beta}$ since $\p_\beta g_{\alpha 1}=\p_\alpha g_{1\beta}=0$ on the boundary. Thus, we obtain $\p_1 g_{\alpha\beta}=0$ on the boundary and prove the claim. We know the metric $g^*$ is in the $C^{1,\lambda}$ space so that (i) is an immediate result of the above claim. By the Taylor's expansion, the second property comes from the fact $g^*$ is bounded in the $C^{1,\alpha}$ topology for all $\alpha\in (0,1)$.
\end{proof}

\subsection{Local gauge} \label{Localgauge} 
We now return to the step to find a diffeomorphism $H$ which satisfies the equation (\ref{Gaugecondition}). Fixing the boundary $\p^\infty(Z_R(p))$ such that $B_{t^+}H^*g^+=0$ in $Z_R(p)$, which is equivalent to $B_{(H^{-1})^*t^+} g^+=0$ in $H^{-1}(Z_R(p))$.  Given small $R>0$ and $p\in \p X$, let $\Psi_{p,R}: Y_1\subset \mathbb{H}\to Z_R(p)$ be a boundary M\"obius chart (see Appendix B). It follows from \cite[Lemma 6.1]{Lee1} that for any $\lambda\in (0,1)$, we have
$$
\|\Psi_{p,R}^* t^+ - g_{ \mathbb{H}}\|_{3,\lambda;Y_1}\le CR,
$$
where the positive constant $C>0$ is independent of the sequence and the point $p\in \p X$.  We denote $\varphi$ some non-negative smooth cut-off function such that $\varphi\equiv 1$ on $Y_{1/2}$ and $\varphi\equiv 0$ on $\mathbb{H}\setminus Y_1$. We want to glue the metric $\Psi_{p,R}^* t^+$ with the standard hyperbolic metric $g_{\mathbb{H}}$ as follows
$$
t^+_{p,R}=\varphi \Psi_{p,R}^* t^++(1-\varphi)g_{\mathbb{H}}.
$$
There exists some small $\bar R_0>0$ such that the sectional curvature of $t^+_{p,R}$ is negative and $\rho^2 t^+_{p,R}$ is a compact family of AH metrics in the $C^{3,\lambda}$-Cheeger-Gromov topology for all $R\le\bar R_0$, for all $p\in \p X$ and for the sequence (for adapted metrics)  since $\rho^2t^+$ is compact family of AH metrics in the $C^{3,\lambda}$-Cheeger-Gromov topology. We denote $\tilde Z_R(p)$ the related domain of a boundary M\"obius chart for such AH metric $t^+_{p,R}$. We consider  the following mapping $\Psi$.
$$
\begin{array}{lcll}
\Psi:&C_{1+\lambda}^{2,\lambda}(\tilde  Z_{\bar R_0}(p); T\tilde  Z_{\bar R_0}(p))\times C_{1+\lambda}^{1,\lambda}(\tilde Z_{\bar R_0}(p); \Sigma^2) &\to& C_{1+\lambda}^{0,\lambda}(\tilde Z_{\bar R_0}(p); T\tilde Z_{\bar R_0}(p))\times C_{1+\lambda}^{1,\lambda}(\tilde Z_{\bar R_0}(p); \Sigma^2)\\
&(v,w)&\mapsto &(B_{(H_v^{-1})^*t^+_{p,\bar R_0}}(t^+_w),w)
\end{array}
$$
where $t^+_w= t^+_{p,\bar R_0}+  w$.  It is clear that 
$$
D_1 \Psi_1 (0,0)(Y)=B_{t^+_{p,\bar R_0}} ( (\delta_{t^+_{p,\bar R_0}})^*Y)=  \frac 12((\nabla )^*\nabla  -Ric[t^+_{p,\bar R_0}] )Y.
$$
Here $\Psi=(\Psi_1,\Psi_2)$.  It is known \cite[Theorem C]{Lee1} that $D_1 \Psi_1 (0,0):  C_{\delta}^{k,\lambda}\to C_{\delta}^{k-2,\lambda}$ is an isomorphism, provided $\delta\in (-1,d)$. In the following, if there is no confusion, the set $Z_R(p)$ is always related to the metric $t^+$. 

\begin{lemm} 
\label{lemma2bisSection3.1} 
Under the assumptions (H1)-(H3), for any given $\lambda\in (0,1)$, there exist some positive constant $C$ and  some small number $\eta>0$, independent of $\bar R_0$ and $p\in \p X$ (and the sequence of the metrics) such that  $\Psi$ is a $C^1$ mapping, and for  all $(v_i, w_i)\in C_{1+\lambda}^{2,\lambda}(\tilde Z_{\bar R_0}(p); T\tilde Z_{\bar R_0}(p))\times C_{1+\lambda}^{1,\lambda}(\tilde Z_{\bar R_0}(p); \Sigma^2) $ with $\|v_i\|+\|w_i\|\le \eta$  for $i=1,2$, there holds
$$
\begin{array}{ll}
i)& \|D_1 \Psi_1 (0,0)\|+\|(D_1 \Psi_1 (0,0))^{-1}\|\le C,\\
ii)& \| D \Psi_1 (v_1,w_1)-D \Psi_1 (v_2,w_2)\|\le C(\|v_1-v_2\|_{C^{2,\lambda}_{1+\lambda}(\tilde Z_{\bar R_0}(p))}+\|w_1-w_2\|_{C^{1,\lambda}_{1+\lambda}(\tilde Z_{\bar R_0}(p))}).
\end{array}
$$
\end{lemm}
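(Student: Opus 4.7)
The plan is to reduce everything to Lee's isomorphism theorem \cite[Theorem C]{Lee1} applied uniformly across the reference family $\{t^+_{p,R}\}$ constructed just above the statement. By construction each $t^+_{p,R}$ differs from the hyperbolic metric $g_{\mathbb{H}}$ by at most $O(R)$ in $C^{3,\lambda}$ on $Y_1$ and equals $g_{\mathbb{H}}$ outside $Y_1$, so once $\bar R_0$ is small the entire family is precompact in $C^{3,\lambda}$ Cheeger--Gromov topology. This precompactness is what will yield constants uniform in $p\in\p X$ and in the sequence index.

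For (i), the formula $D_1 \Psi_1(0,0) = \tfrac12(\nabla^*\nabla[t^+] - \mathrm{Ric}[t^+])$ exhibits the linearization as a uniformly elliptic second-order operator with coefficients lying in a precompact subset of $C^{1,\lambda}$. The mapping bound $\|D_1\Psi_1(0,0)\|_{C^{2,\lambda}_{1+\lambda}\to C^{0,\lambda}_{1+\lambda}}\le C$ is then immediate. The bound on $(D_1\Psi_1(0,0))^{-1}$ follows from Lee's theorem, which yields an isomorphism on these weighted H\"older spaces for the weight $\delta=1+\lambda\in(-1,d)$. Uniformity of the inverse bound is obtained by a standard contradiction argument: if it failed, one could extract a subsequence $t^+_{p_n,R_n}$ converging in $C^{3,\lambda}$ to a limit metric $t^+_\infty$ (still a small perturbation of $g_{\mathbb{H}}$) together with unit-norm $Y_n\in C^{2,\lambda}_{1+\lambda}$ with $D_1\Psi_1(0,0)Y_n\to 0$; Arzel\`a--Ascoli on compact subsets plus the weight control near $\{\rho=0\}$ produce a nontrivial element of the kernel of the limit operator, contradicting Lee's theorem applied to $t^+_\infty$.

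For (ii), I would expand $\Psi_1(v,w) = B_{(H_v^{-1})^* t^+}(t^+ + w)$ directly. The assignment $k^+\mapsto B_{k^+}(\cdot)$ is polynomial in the components of $k^+$, $(k^+)^{-1}$ and $\partial k^+$, and depends linearly on its second argument. Since $t^+$ is $C^4$, the map $v\mapsto (H_v^{-1})^* t^+$, built from the exponential $H_v = \exp^{t^+}(v)$, is $C^1$ from $C^{2,\lambda}_{1+\lambda}(T\tilde Z_{\bar R_0}(p))$ into $C^{1,\lambda}_{1+\lambda}(\Sigma^2)$ on the ball $\|v\|<\eta$ for $\eta$ small. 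Applying the chain rule expresses $D\Psi_1(v,w)(Y,V)$ as a polynomial in $v,w,Y,V$ and their first derivatives with coefficients in $C^{1,\lambda}$ controlled by $t^+$; the Lipschitz estimate in (ii) is then a routine consequence of product estimates in weighted H\"older spaces on $\|v\|+\|w\|\le\eta$.

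The main obstacle is arranging for the constants $C$ and $\eta$ to be independent of $p$ and of the sequence index. This is handled entirely by the precompactness of $\{t^+_{p,R}\}$ in $C^{3,\lambda}$ established above, together with the stability of Lee's inverse bound under $C^{3,\lambda}$ perturbations of the reference metric; the same compactness simultaneously controls the Schauder constants, the radius on which $\exp^{t^+}$ is a diffeomorphism, and the moduli of continuity needed in the product estimates.
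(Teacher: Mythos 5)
Your overall strategy coincides with the paper's: bound $D_1\Psi_1(0,0)$ and its inverse uniformly via Lee's isomorphism theorem together with a compactness/contradiction argument over the precompact family of reference metrics, and obtain the $C^1$ and Lipschitz properties of $\Psi_1$ by differentiating $B_{(H_v^{-1})^*t^+}(t^+ + w)$ directly. However, there is a genuine gap in your uniformity argument for $(D_1\Psi_1(0,0))^{-1}$. After extracting $Y_n$ with $\|Y_n\|_{C^{2,\lambda}_{1+\lambda}}=1$ and $D_1\Psi_1(0,0)[t_n^+]\,Y_n\to 0$, you assert that Arzel\`a--Ascoli on compact subsets plus ``weight control near $\{\rho=0\}$'' produces a nontrivial element of the kernel of the limit operator. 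As stated this does not follow: locally uniform convergence only controls the limit in weaker norms, and the unit $C^{2,\lambda}_{1+\lambda}$-norm of $Y_n$ may be carried by high-frequency oscillation (the H\"older seminorm) or may concentrate in M\"obius charts whose centers approach the conformal boundary; in either case the local limit is $0$ and no contradiction results. The paper closes exactly this hole with two quantitative inputs: the uniform weighted a priori estimate $\|Y\|_{C^{2,\lambda}_{1+\lambda}}\le C\bigl(\|PY\|_{C^{0,\lambda}_{1+\lambda}}+\|Y\|_{C^{0,0}_{1+\lambda'}}\bigr)$ with $\lambda'<\lambda$ (\cite[Lemma 6.4]{Lee1}, with $C$ uniform over the precompact family), which forces $\|Y_n\|_{C^{0,0}_{1+\lambda'}}\ge 1/(2C)$, and the Rellich-type compact embedding $C^{2,\lambda}_{1+\lambda}\hookrightarrow C^{0,0}_{1+\lambda'}$ (\cite[Lemma 3.6]{Lee1}), which converts that lower bound into a nonzero limit $Y_\infty$ annihilated by the limit operator; injectivity of the limit operator (negative sectional curvature of $t^+_\infty$, hence trivial $L^2$ kernel, then \cite[Theorem C]{Lee1}) yields the contradiction. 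You must invoke or reprove these two facts; the phrase ``weight control near $\{\rho=0\}$'' is not a substitute.

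For part (ii) your route is the one the paper takes, following \cite[Lemmas 4.2 and 4.4]{CDLS}: write $H_v=\exp^{t^+}(v)$ in M\"obius charts, use that the component functions of the exponential map are $C^3$ because $t^+$ is $C^4$ (so its Christoffel symbols are $C^3$), and estimate $H_v-\mathrm{id}$, $H_v^{-1}-\mathrm{id}$ and $(H_v^{-1})^*t^+-t^+$ chart by chart, so that every constant is expressed through the $C^3$/$C^4$ bounds on the precompact family $t^+_{p,\bar R_0}$ and is therefore independent of $p$ and of the sequence. Your appeal to ``routine product estimates in weighted H\"older spaces'' is this computation in compressed form; that is acceptable as a sketch, but the uniformity in $p$ and in the sequence that the lemma asserts lives precisely in those chart-level estimates (including the estimate for $H_{v_1+v_2}^{-1}-H_{v_1}^{-1}$ needed for the Lipschitz bound on $D\Psi_1$), so they should be carried out or explicitly cited rather than merely asserted.
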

\begin{proof}  If there is no confusion, we denote the metric $t^+_{p,\bar R_0}$ as $t^+$ in the proof. It follows from \cite[Lemma 4.6]{Lee1} that $\|D_1 \Psi_1 (0,0)\|\le C$ for some positive constant $C$ independent of  $\bar R_0$, $p\in \p X$ (and the sequence of the 
metrics) since the family of metrics $t_i$ (resp. $t_i^+$) is compact in the $C^{3,\gamma}$-Cheeger-Gromov topology for all $\gamma\in (0,1)$. \\
Now we prove $\|(D_1 \Psi_1 (0,0))^{-1}\|\le C$ by the contradiction. Recall the sectional curvature  is negative on $\tilde Z_{\bar R_0}(p)$. Therefore, there is no $L^2$ kernel for the linear operator $\frac 12((\nabla )^*\nabla  -Ric[t^+])$. As a consequence,  it follows from \cite[Theorem C]{Lee1} that $D_1 \Psi_1 (0,0):  C_{1+\lambda}^{2,\lambda}\to C_{1+\lambda}^{0,\lambda}$ is an isomorphism since $1+\lambda\in (-1,d)$. We suppose 
$$
\|(D_1 \Psi_1 (0,0))^{-1}[t_i^+]\|\to \infty.
$$ 
Thus, we choose some vector field $v_i\in C_{1+\lambda}^{2,\lambda}(\tilde Z_{\bar R_0}(p); T\tilde Z_{\bar R_0}(p))$ with $\|v_i\|_{C^{2,\lambda}_{1+\lambda}}=1$ and $$ \|(D_1 \Psi_1 (0,0)[t_i^+])v_i\|_{C^{0,\lambda}_{1+\lambda}}\to 0.$$  
Up to a subsequence, $t_i$ converges to $t_\infty$ in the $C^{3,\gamma}$-Cheeger-Gromov topology for all  $\gamma\in (0,1)$. Modulo  a subsequence,  $t_i^+$ converges also to a $C^{3,\gamma}$ AH $t_\infty^+=\rho^{-2}t_\infty$ in the pointed  $C^{3,\gamma}$ -Cheeger-Gromov topology.  On the other hand, by  \cite[Lemma 6.4]{Lee1}, 
$$
\|v_i\|_{C^{2,\lambda}_{1+\lambda}}\le C( \|(D_1 \Psi_1 (0,0)[t_i^+])v_i\|_{C^{0,\lambda}_{1+\lambda}}+ \|v_i\|_{C^{0,0}_{1+\lambda'}}).
$$
where $\lambda'\in (0,\lambda)$ and  $C$ is some positive constant independent of  $\bar R_0$, $p$ and the sequence since $t_i$ is in some compact set in the $C^{3,\gamma}$-Cheeger-Gromov topology. Thus, we have for large $i$
$$
\|v_i\|_{C^{0,0}_{1+\lambda'}}\ge 1/2C.
$$
By the Rellich Lemma \cite[Lemma 3.6]{Lee1}, the mapping $C^{2,\lambda}_{1+\lambda}\hookrightarrow C^{0,0}_{1+\lambda'}$ is a compact embedding so that we infer $\|v_\infty\|_{C^{0,0}_{1+\lambda'}}\ge 1/2C$. On the other hand, we have
$$
(D_1 \Psi_1 (0,0)[t_\infty^+])v_\infty=0.
$$
As above, we have $D_1 \Psi_1 (0,0)[t_\infty^+]:  C_{1+\lambda}^{2,\lambda}\to C_{1+\lambda}^{0,\lambda}$ is an isomorphism so that $v_\infty=0$. This contradiction yields the desired result (i).\\
The proof of the property (ii) is  similar as in \cite[Lemmas 4.2 and 4.4]{CDLS}. 
\end{proof}


\begin{lemm} 
\label{lemma2Section3.1} 
{Under the assumptions (H1)-(H3), for any given $\lambda\in (0,1)$,  there exist some positive constants $C$ and $R_1<\bar R_0/2$  independent of $p\in \p X$ (and the sequence of the metrics), such that for any $p\in \p X$ and for any $R\le R_1$, there exist a small local gauge vector field $\tilde v\in C_{1+\lambda}^{2,\lambda}(\tilde Z_{\bar R_0}(p); T\tilde Z_{\bar R_0}(p))$ which satisfies,
$$
\begin{array}{ll}
i)& H_v^*g^+ \mbox {solves  local gauge for gauged Einstein equation in } Z_{R/2}(p),\\
ii)&\| H_v\|_{C^{2,\lambda}(Z_{\bar R_0/2}(p))}\le C, and \\
iii)&\mbox{for any }\tilde \lambda\in (\lambda,1), \mbox{there exists some  } C_1>0, \mbox{ such that  }
\\
&\|\tilde v\|_{C^{2,\lambda}_{1+\lambda}(\tilde Z_{\bar R_0}(p))}\le C_1 R^{\tilde \lambda-\lambda}{\bar R_0}^{1+\lambda},
\end{array}
$$
where $v=(\Psi_{p,\bar R_0})_*\tilde v$.}
\end{lemm}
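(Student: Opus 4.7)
The plan is to produce $\tilde v$ by a Banach fixed-point argument on the extended AH domain $\tilde Z_{\bar R_0}(p)$, applied to the nonlinear map $\Psi_1$ whose linearization at the origin has been shown to be uniformly invertible in Lemma \ref{lemma2bisSection3.1}. The strategy is to absorb the ``data'' $g^+-t^+$ (cut off at scale $R$) into the second argument $w$ of $\Psi_1$, invert the linearization in the first argument, and verify that the fixed point gives the gauge condition on the smaller set $Z_{R/2}(p)$ where the cut-off is identically $1$.

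First I would construct the data tensor $w$: pull $g^+-t^+$ back by $\Psi_{p,\bar R_0}$, multiply by the cut-off $\varphi_R$ (which is $1$ on $Z_{R/2}(p)$ and vanishes off $Z_R(p)$), and extend by $0$ to all of $\tilde Z_{\bar R_0}(p)$. Combining $\|\varphi_R\|_{C^{1,\lambda}_{1+\lambda}}\le CR^{-1-\lambda}$ with $\|g^+-t^+\|_{C^{1,\lambda'}_{1+\lambda'}(Z_{\bar R_0}(p))}\le C\bar R_0^{\lambda'-\lambda}$ from Lemma \ref{lemma1Section3.1}(ii) and the product rule in weighted spaces yields
\[
\|w\|_{C^{1,\lambda}_{1+\lambda}(\tilde Z_{\bar R_0}(p))}\;\le\; C\,R^{\lambda'-\lambda}\,\bar R_0^{1+\lambda}, \qquad 0\le\lambda<\lambda'<1.
\]

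Next, since $\Psi_1(0,0)=B_{t^+}(t^+)=0$, the equation $\Psi_1(\tilde v,w)=0$ rewrites as the fixed-point problem
\[
\tilde v\;=\;T(\tilde v)\;:=\;-[D_1\Psi_1(0,0)]^{-1}\bigl(\Psi_1(0,w)+\mathcal N(\tilde v,w)\bigr),
\]
with $\mathcal N(\tilde v,w):=\Psi_1(\tilde v,w)-\Psi_1(0,w)-D_1\Psi_1(0,0)\tilde v$ collecting terms that vanish to second order at $\tilde v=0$. Lemma \ref{lemma2bisSection3.1}(i) gives a uniform norm bound for $[D_1\Psi_1(0,0)]^{-1}$, and Lemma \ref{lemma2bisSection3.1}(ii) shows that $\mathcal N$ has small Lipschitz constant on a ball of radius $\eta$ in $(\tilde v,w)$. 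Choosing $R_1<\bar R_0/2$ small enough (uniformly in $p$ and along the sequence) so that $\|w\|\le\eta/2$ and so that the contraction constant of $T$ is $\le 1/2$, the Banach fixed-point theorem produces a unique small solution satisfying
\[
\|\tilde v\|_{C^{2,\lambda}_{1+\lambda}(\tilde Z_{\bar R_0}(p))}\;\le\;2\,\|[D_1\Psi_1(0,0)]^{-1}\|\cdot\|\Psi_1(0,w)\|_{C^{0,\lambda}_{1+\lambda}}\;\le\;C\,R^{\lambda'-\lambda}\,\bar R_0^{1+\lambda},
\]
which is exactly assertion (iii).

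For (i), on $Z_{R/2}(p)$ the cut-off $\varphi_R\equiv 1$, so $t^++w$ coincides there with the pulled-back $g^+$, and unwinding the definition of $\Psi_1$ one reads off $B_{t^+}(H_v^*g^+)=0$ on $Z_{R/2}(p)$ with $v=(\Psi_{p,\bar R_0})_*\tilde v$. For (ii), $H_v(x)=\exp_x^{t^+}(v(x))$; since $t^+$ is $C^3$ and $v$ is small in $C^{2,\lambda}_{1+\lambda}$, standard ODE estimates for the exponential map in Möbius charts, as used in the proof of Lemma \ref{lemma2bisSection3.1}, produce the uniform $C^{2,\lambda}$ bound on $Z_{\bar R_0/2}(p)$. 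The main technical obstacle is the weighted-norm bookkeeping: the cut-off $\varphi_R$ costs a factor $R^{-1-\lambda}$ in $C^{1,\lambda}_{1+\lambda}$, which is only beaten into smallness by sacrificing a sliver $\lambda'-\lambda>0$ of regularity against the decay of $g^+-t^+$ provided by Lemma \ref{lemma1Section3.1}; ensuring that all constants (the inverse of $D_1\Psi_1(0,0)$, the Lipschitz constant for $\mathcal N$, the product-rule constants, and the exponential-map estimates) are uniform in $p\in\partial X$ and along the sequence is where the previously established uniform statements of Lemmas \ref{lemma1Section3.1} and \ref{lemma2bisSection3.1} are essential.
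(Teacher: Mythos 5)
Your proposal is correct and follows essentially the same route as the paper: you build the data by cutting off $g^+-t^+$ at scale $R$ (with the same $C^{1,\lambda}_{1+\lambda}$ smallness $CR^{\lambda'-\lambda}\bar R_0^{1+\lambda}$ coming from Lemma \ref{lemma1Section3.1}), solve $\Psi_1(\tilde v,\tilde w_R)=0$ using the uniform invertibility of $D_1\Psi_1(0,0)$ and the Lipschitz control of $D\Psi_1$ from Lemma \ref{lemma2bisSection3.1}, and read off (i) by naturality of $B$ under $H_v$ on the set where the cut-off is $1$; the only cosmetic difference is that you run the quantitative inverse function theorem explicitly as a Banach contraction, whereas the paper invokes it directly, and for (ii) the paper cites \cite[Lemma 4.4]{CDLS} while you appeal to the same exponential-map estimates already used in Lemma \ref{lemma2bisSection3.1}.
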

\begin{proof} We assume $R$ is small so that $t^+_{p,\bar R_0}=(\Psi_{p,\bar R_0})_*t^+$ on $ (\Psi_{p,\bar R_0})^{-1}(Z_R(p))$.  Let $\varphi:\mathbb{R}\to \mathbb{R}$ be a smooth non-negative cut-off function satisfying $\varphi(s)\equiv 1$ for all $s<1/2$ and $\varphi(s)\equiv 0$ for all $s>1$. We consider $$w_R(x)=\varphi(d_{t}(x,p)/R)(g^+-t^+) .$$ 
Set $\tilde w_R= (\Psi_{p,\bar R_0})^*w_R$. Thanks of Lemma \ref{lemma1Section3.1}, we have $\tilde w_R\in C_{1+\lambda}^{1,\lambda}(\tilde Z_{\bar R_0}(p); \Sigma^2)$ and
$$
\|\tilde w_R\|_{C^{1,\lambda}_{1+\lambda}(\tilde Z_{\bar R_0}(p))}\le CR^{\tilde\lambda-\lambda}{\bar R_0}^{1+\lambda},
$$
with $0<\lambda<\tilde \lambda$ so that $\|\tilde w_R\|_{C^{1,\lambda}_{1+\lambda}(\tilde Z_{\bar R_0}(p))}\to 0$ as $R\to 0$. In view of Lemma \ref{lemma2bisSection3.1}, it follows from the inverse function theorem there exists some small $R_1<\bar R_0/2$ such that for all $R\le R_1$ we have $\tilde v\in C_{1+\lambda}^{2,\lambda}(\tilde Z_{\bar R_0}(p); T\tilde Z_{\bar R_0}(p))$ which solves $\Psi (\tilde v,\tilde w_R)=(0,\tilde w_R)$. Moreover, we estimate
$$
\|\tilde v\|_{C^{2,\lambda}_{1+\lambda}(\tilde Z_{\bar R_0}(p))}\le C R^{\tilde\lambda-\lambda}{\bar R_0}^{1+\lambda}.
$$
To see that, we have $\Psi (0,0)=0$ and $\Psi_1(0,\tilde w_R)=B_{(\Psi_{p,\bar R_0})^*t^+}((\Psi_{p,\bar R_0})^*(t^++ w_R))=(\Psi_{p,\bar R_0})^*B_{t^+}( w_R) $ so that 
$$\|\Psi_1(0,\tilde w_R)\|_{C^{0,\lambda}_{1+\lambda}(\tilde Z_{\bar R_0}(p))}\le C R^{\tilde\lambda-\lambda}{\bar R_0}^{1+\lambda} .
$$ 
Thus, we establish (iii). As a consequence, we obtain
$$
\| v\|_{C^{2,\lambda}_{1+\lambda}( Z_{\bar R_0}(p))}\le C R^{\tilde\lambda-\lambda}.
$$
We have $B_{(H_v^{-1})^*t^+}(t^++w_R)=0$ in $Z_{R_1}(p)$, which yields 
$$
0=(H_v)^*B_{(H_v^{-1})^*t^+}(t^++w_R)=B_{t^+}(H_v)^*(t^++w_R).
$$
Recall $g^+=t^++w$ in $Z_{R/2}(p)$. Hence  $H_v^*g^+ $ solves local gauge for gauged Einstein equation in $ Z_{R/2}(p)$, that is, (i) is proved. The proof of (ii) is given in \cite[Lemma 4.4]{CDLS}. Thus we finish the proof.
\end{proof}

\subsection{$\varepsilon$-regularity} \label{varepsilon-regularity}
In this part, we want to prove some higher order regularity of $g^+$  up to a diffeomorphism (or equivalently, high order regularity of $H_v^*g^+- t^+$). We establish first the uniform bound for the linearized operator $D_1 F$, where
$F$ is the gauged Einstein functional \eqref{gauged Einstein equationbis}, and its inverse. 
\begin{lemm} 
\label{lemma3bisSection3.1} 
Under the assumptions (H1)-(H3),  there exists a positive constant $C$  independent of $p\in \p X$ (and the sequence of the metrics) such that
$$
\| D_1 F(t^+_{p,\bar R_0},t^+_{p,\bar R_0})\|+ \| (D_1 F(t^+_{p,\bar R_0},t^+_{p,\bar R_0}))^{-1}\|\le C, 
$$
where $D_1 F(t^+_{p,\bar R_0},t^+_{p,\bar R_0}): C_{2+\lambda}^{2,\lambda}(\tilde Z_{ \bar R_0}(p))\to C^{0,\lambda}_{2+\lambda}(\tilde  Z_{\bar R_0}(p))$ (or $D_1 F(t^+_{p,\bar R_0},t^+_{p,\bar R_0}): C_{1+\lambda}^{2,\lambda}(\tilde Z_{ \bar R_0}(p))\to C^{0,\lambda}_{1+\lambda}(\tilde  Z_{\bar R_0}(p))$).
Moreover, such estimates hold also for $D_1 F(t^+_{p,\bar R_0},t^+_{p,\bar R_0}): C_{3+\lambda}^{3,\lambda}(\tilde Z_{ \bar R_0}(p))\to C^{1,\lambda}_{3+\lambda}(\tilde  Z_{\bar R_0}(p)).$ (or $D_1 F(t^+_{p,\bar R_0},t^+_{p,\bar R_0}): C_{1+\lambda}^{3,\lambda}(\tilde Z_{ \bar R_0}(p))\to C^{1,\lambda}_{1+\lambda}(\tilde  Z_{\bar R_0}(p))$).
\end{lemm}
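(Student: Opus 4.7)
The plan is to reproduce, in the setting of the linearized gauged Einstein operator, the same contradiction-plus-compactness argument that was used for $D_1\Psi_1(0,0)$ in the proof of Lemma \ref{lemma2bisSection3.1}. Throughout, I will write $L_i := D_1 F(t^+_{p_i,\bar R_0}, t^+_{p_i,\bar R_0}) = \tfrac{1}{2}(\Delta_L[t^+_{p_i,\bar R_0}] + 2(d-1))$. The upper bound $\|L_i\| \le C$ on each of the mapping pairs is essentially immediate: by construction, $\{t^+_{p,\bar R_0}\}$ forms a compact family of AH metrics in $C^{3,\gamma}$ Cheeger-Gromov topology uniformly in $p\in\partial X$ and in the sequence, so the Christoffel symbols and Riemann tensor entering the coefficients of $L_i$ are uniformly bounded in $C^{2,\lambda}$ and $C^{1,\lambda}$ respectively; this suffices to bound $L_i$ as a map $C^{k,\lambda}_\delta \to C^{k-2,\lambda}_\delta$ for $k=2,3$.

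For the inverse bound I argue by contradiction exactly as for part (i) of Lemma \ref{lemma2bisSection3.1}. Suppose $\|L_i^{-1}\|\to\infty$; choose $u_i\in C^{k,\lambda}_\delta$ with $\|u_i\|_{C^{k,\lambda}_\delta}=1$ and $\|L_i u_i\|_{C^{k-2,\lambda}_\delta}\to 0$. Pass to a subsequence so that $t^+_{p_i,\bar R_0}$ converges in $C^{3,\gamma}$ Cheeger-Gromov topology (for every $\gamma\in(0,1)$) to a limit AH metric $t^+_\infty$, whose sectional curvatures are still negative since negative sectional curvature is preserved under $C^2$ convergence. Apply the weighted Schauder estimate of \cite[Lemma 6.4]{Lee1} in the form
\[
\|u_i\|_{C^{k,\lambda}_\delta} \le C\bigl(\|L_i u_i\|_{C^{k-2,\lambda}_\delta} + \|u_i\|_{C^{0,0}_{\delta'}}\bigr),
\]
with $\delta'<\delta$ chosen slightly smaller, where $C$ depends only on the $C^{3,\gamma}$ Cheeger-Gromov bound for $t^+_{p_i,\bar R_0}$ and hence is uniform. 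This forces $\|u_i\|_{C^{0,0}_{\delta'}}\ge 1/(2C)$. By the weighted Rellich compactness \cite[Lemma 3.6]{Lee1} the embedding $C^{k,\lambda}_\delta \hookrightarrow C^{0,0}_{\delta'}$ is compact, so a further subsequence converges in $C^{0,0}_{\delta'}$ to a nonzero limit $u_\infty$ satisfying $L_\infty u_\infty = 0$ distributionally.

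It remains to rule out $u_\infty$. Because $t^+_\infty$ has negative sectional curvature, the Weitzenb\"ock-type expression for $\tfrac12(\Delta_L + 2(d-1))$ has no $L^2$ kernel on symmetric $2$-tensors; combined with \cite[Theorem C]{Lee1} this gives that $L_\infty: C^{k,\lambda}_\delta \to C^{k-2,\lambda}_\delta$ is an isomorphism provided the weight $\delta\in\{1+\lambda, 2+\lambda, 3+\lambda\}$ lies in the admissible interval $(-1,d)$, which is automatic for $d\ge 4$ and $\lambda\in(0,1)$. By elliptic regularity $u_\infty\in C^{k,\lambda}_\delta$, and the isomorphism property yields $u_\infty=0$, a contradiction. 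This proves the claimed uniform bounds at the $C^{2,\lambda}_{\delta}$-level for $\delta\in\{1+\lambda, 2+\lambda\}$. The $C^{3,\lambda}_{\delta}$-level statements follow by differentiating the equation once and applying the interior/boundary Schauder estimate to the coefficients of $L_i$, which are controlled in $C^{1,\lambda}$ uniformly in $i$. The main obstacle I anticipate is purely technical: choosing the weight $\delta'$ and the sequence of cutoffs correctly so that Lee's Schauder and Rellich estimates apply with constants that are \emph{uniform} in $p\in\partial X$ and in the sequence, but this is already handled in the analogous argument in Lemma \ref{lemma2bisSection3.1}.
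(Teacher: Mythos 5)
Your proposal is correct and follows essentially the same route as the paper: negative sectional curvature of $t^+_{p,\bar R_0}$ gives triviality of the $L^2$ kernel, Lee's Theorem C then gives the isomorphism at each weight, and uniformity of the inverse is obtained by the same contradiction argument (weighted Schauder estimate plus Rellich compactness from \cite{Lee1}) already used in Lemma \ref{lemma2bisSection3.1}, exploiting the $C^{3,\gamma}$ Cheeger--Gromov compactness of the family. The only cosmetic difference is that the paper quotes the admissible weight interval $(0,d)$ for the Lichnerowicz-type operator rather than $(-1,d)$, which does not affect the argument since $1+\lambda$, $2+\lambda$, $3+\lambda$ lie in both for $d\ge 4$.
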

\begin{proof} We state the sectional curvature of $t^+_{p,\bar R_0}$ is negative in $\tilde Z_{ \bar R_0}(p)$. It is known (see \cite[Proof of Theorem A]{Lee1}) the $L^2$ kernel of the operator 
$D_1 F(t^+_{p,\bar R_0},t^+_{p,\bar R_0})$ is trivial. Hence by \cite[Theorem C]{Lee1},  $D_1 F(t^+_{p,\bar R_0},t^+_{p,\bar R_0}):C_{2+\lambda}^{2,\lambda}(\tilde Z_{ \bar R_0}(p)))\to C^{0,\lambda}_{2+\lambda}(\tilde Z_{\bar R_0}(p))$ is an isomorphism since $2+\lambda\in (0,d)$. Recall the family of $t$ is compact in the $C^{2,\lambda}$-Cheeger-Gromov topology for all $\lambda\in (0,1)$ (even $C^{3,\lambda}$). By the same arguments in the proof of Lemma \ref{lemma2bisSection3.1}, the desired results follow. The proof in the high order H\"older spaces  is same. We finish the proof.
\end{proof}

Now we can prove the $\varepsilon$-regularity result.

\begin{lemm} 
\label{lemma3Section3.1} 
Under the assumptions (H1)-(H3),  there exists positive constant $C$ and a small positive constant $\varepsilon$ independent of $p\in \p X$ (and the sequence of the metrics) such that if for all $R<\min(R_1/2,1)$ we have 
$$\|H_v^*g^+- t^+\|_{C_{\lambda}^{0,\lambda}(Z_{R}(p))}\le \varepsilon\mbox{ and }  \|H_v^*g^+- t^+\|_{C_{1+\lambda}^{1,\lambda}(Z_{R}(p))}\le C,$$ 
then we have
$$
\|H_v^*g^+- t^+\|_{C_{2+\lambda}^{2,\lambda}(Z_{{R}/2}(p))}\le\frac{C}{R},
$$
moreover, there holds
$$
\|H_v^*g^+- t^+\|_{C_{3+\lambda}^{3,\lambda}(Z_{{R}/4}(p))}\le\frac{C}{R^2}.
$$
\end{lemm}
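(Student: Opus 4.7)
The plan is to view $u := H_v^* g^+ - t^+$ as a solution of an elliptic equation coming from the gauged Einstein equation, and to bootstrap from the $C^{0,\lambda}$ smallness hypothesis by means of Schauder-type estimates whose uniformity is provided by Lemma \ref{lemma3bisSection3.1}. From Lemma \ref{lemma2Section3.1} we have $F(H_v^* g^+, t^+) = 0$ on $Z_{R/2}(p)$. Taylor-expanding $F$ in its first argument around $t^+$ gives
\[
D_1 F(t^+, t^+)(u) = -F(t^+, t^+) - Q(u), \quad Q(u) := F(t^+ + u, t^+) - F(t^+, t^+) - D_1 F(t^+, t^+)(u),
\]
where $F(t^+, t^+) = \mathrm{Ric}[t^+] + (d-1)\,t^+$ is controlled uniformly by the Fefferman-Graham construction of $t$ (in particular it lies in $C^{1,\lambda}_{s}$ for some $s \ge 2+\lambda$ thanks to the even-power expansion of $t$), and $Q(u)$ is a schematic quadratic remainder in the jets of $u$ of order $\le 2$.

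First I would pull everything back by the boundary M\"obius chart $\Psi_{p,R}$ to $Y_1 \subset \mathbb{H}$. Under this rescaling, the weighted H\"older norms $C^{k,\lambda}_{k+\lambda}$ correspond, up to powers of $R$, to ordinary $C^{k,\lambda}$ norms on $Y_1$; the reference metric $t^+$ becomes uniformly close in $C^{3,\lambda}$ to the hyperbolic metric on $Y_1$, and the hypotheses translate into $\|\tilde u\|_{C^{0,\lambda}(Y_1)} \le \varepsilon$ and $\|\tilde u\|_{C^{1,\lambda}(Y_1)} \le C$ for $\tilde u = \Psi_{p,R}^* u$. In this normalized setting the nonlinear remainder satisfies the schematic bound
\[
\|Q(u)\|_{C^{0,\lambda}} \lesssim \|u\|_{C^{0,\lambda}}\big(\|u\|_{C^{2,\lambda}} + \|u\|_{C^{1,\lambda}}^{2}\big),
\]
so that, once $\varepsilon$ is small enough, the term involving $\|u\|_{C^{2,\lambda}}$ can be absorbed into the left-hand side of the linear estimate.

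Next I would apply the uniform invertibility of $D_1 F(t^+_{p,\bar R_0}, t^+_{p,\bar R_0})$ on weighted H\"older spaces from Lemma \ref{lemma3bisSection3.1}, combined with standard interior Schauder estimates for second-order elliptic systems and a cut-off argument between $Z_R(p)$ and $Z_{R/2}(p)$, to conclude
\[
\|u\|_{C^{2,\lambda}_{2+\lambda}(Z_{R/2}(p))} \le C\bigl(\|F(t^+, t^+)\|_{C^{0,\lambda}_{2+\lambda}(Z_R(p))} + \|Q(u)\|_{C^{0,\lambda}_{2+\lambda}(Z_R(p))} + \|u\|_{C^{0,\lambda}_\lambda(Z_R(p))}\bigr).
\]
The factor $1/R$ in the conclusion comes from the mismatch between the weights $2+\lambda$ and $\lambda$ of the H\"older norms under the $\Psi_{p,R}$ dilation. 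A second application of the same Schauder estimate, now in $C^{3,\lambda}$ and using the already-established $C^{2,\lambda}$ bound together with the corresponding $C^{1,\lambda}_{3+\lambda}$ version of Lemma \ref{lemma3bisSection3.1}, yields the $C^{3,\lambda}_{3+\lambda}$ estimate on $Z_{R/4}(p)$ with the predicted $1/R^2$ scaling.

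The main obstacle I expect is keeping all constants uniform in $R$, $p$ and the sequence index $i$: one must check that the remainder $Q(u)$ really can be absorbed in the weighted norm (not just the unweighted one), and that the cutoff/interpolation steps used to localize from $Z_R$ to $Z_{R/2}$ are compatible with the weight shifts. A secondary technical point is ensuring that $F(t^+, t^+)$ has enough regularity and decay to be handled as a datum in the $C^{0,\lambda}_{2+\lambda}$ and $C^{1,\lambda}_{3+\lambda}$ norms respectively, which reduces to the explicit form of $t$ coming from the Fefferman-Graham expansion truncated at order $\rho^2$.
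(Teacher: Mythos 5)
Your overall strategy coincides with the paper's: both set $E[u]=F(t^++u,t^+)$, use that $u=H_v^*g^+-t^+$ solves $E[u]=0$ (Lemma \ref{lemma2Section3.1}), control the zeroth-order term $E(0)=\mathrm{Ric}[t^+]+(d-1)t^+$ in weighted norms through the Fefferman--Graham form of $t$ and Lemma \ref{Lee1bis}, absorb the quadratic remainder using the $\varepsilon$-smallness of $\|u\|_{C^{0,\lambda}_{\lambda}}$, localize with a cut-off, and invoke the uniform isomorphism of the linearized operator from Lemma \ref{lemma3bisSection3.1}; the third-order estimate is then a repetition one derivative up. So the architecture is right.

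The one point that needs repair is your account of where the factors $1/R$ and $1/R^2$ come from. As displayed, your Schauder-type inequality carries no $R$-dependence at all (if it were true as written, the conclusion would hold with a bound independent of $R$, which is stronger than the lemma and not what the argument yields), and your explanation that the loss is a ``weight mismatch under the $\Psi_{p,R}$ dilation'' misidentifies the mechanism. In the paper the M\"obius rescaling is performed at the \emph{fixed} scale $\bar R_0$ via $\Psi_{p,\bar R_0}$, precisely so that Lemma \ref{lemma3bisSection3.1} (stated for $t^+_{p,\bar R_0}$) applies with constants independent of $R$, and the entire $R$-dependence enters through the commutator with the cut-off: writing $P(\varphi_R u)=-\varphi_R G[u]-\varphi_R E[0]+[\varphi_R,P]u$ with $[\varphi_R,P]u=-\nabla^*u\,\nabla\varphi_R-u\,\nabla^*\nabla\varphi_R$ and $\|\nabla^k\varphi_R\|\le C_0R^{-k}$, the hypothesis $\|u\|_{C^{1,\lambda}_{1+\lambda}}\le C$ turns the commutator into the $C/R$ term, and repeating the argument at third order (now using $E(0)\in C^{3,\lambda}_{(1+\lambda)}$, hence bounded in $C^{3,\lambda}_{3+\lambda}$) produces the $C/R^2$. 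If you insist on rescaling by $\Psi_{p,R}$ with the variable radius $R$, you would instead need uniform invertibility for the whole family $t^+_{p,R}$, $R\le \bar R_0$, which is plausible by the same compactness argument but is not literally Lemma \ref{lemma3bisSection3.1}. Finally, before the inverse operator can be applied you must know a priori that $\varphi_R u$ lies in $C^{2,\lambda}_{1+\lambda}$ and not merely $C^{1,\lambda}_{1+\lambda}$; the paper supplies this through Lee's local regularity lemma, a small but necessary step that your sketch skips.
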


\begin{proof}  
We consider the following functional
$$E[u]:=F(t^++u,t^+),$$
where $u$ is a symmetric $2$-tensor fields.  By Lemma  \ref{lemma2bisSection3.1},  $u={\tilde g}^+ -t^+:= (H_v)^*g^+ -t^+ $ is a solution of $E[u]=0$ in $Z_{R_1/2}(p)$.  It is a is quasilinear uniformly degenerate equation with its  
linearized operator at $0,$ $DE[0]=\frac{1}{2}(\Delta_L+2(d-1))=:P,$ which is
of course, a geometric elliptic operator. 
Recall  $ u$ solves $E(u)=0$ in $Z_{R}(p)$. On the other hand, a direct calculation leads to (see \cite{G00})
$$
E(0)=Ric[t^+]+(d-1)t^+\in C^{1,\lambda}_{(1+\lambda)}(\overline{Z_{R}(p)})\subset C^{0,\lambda}_{(\lambda)}(\overline{Z_{R}(p)}).
$$
Thus, by Lemma \ref{Lee1bis}, we deduce
$$
\|E(0)\|_{C_{2+\lambda}^{0,\lambda}(Z_{R}(p))}\le C.
$$
Here the bound $C$ is independent of $p$ and the sequence.  Let $G[u]=E[u]-E[0]-DE[0]u$ be the quadratic polynomials and higher degree in $u$. Hence we can estimate for small $u$
$$
\|G[u]\|_{C^{0,\lambda}_{2+\lambda}(Z_{R}(p))}\le C\|G[u]\|_{C^{0,\lambda}_{2+2\lambda}(Z_{R}(p))}\leq C( \|u\|_{C^{2,\lambda}_{2+\lambda}(Z_{R}(p))} \|u\|_{C^{0,\lambda}_{\lambda}(Z_{R}(p))}+ \|u\|^2_{C^{1,\lambda}_{1+\lambda}(Z_{R}(p))}),
$$
where $C$ is independent of $p$ and the sequence of the metrics. As $t$ is in in the $C^4$ space, we can choose  $\varphi_R(x)=\varphi(d_{t}(x,p)/R)$ be the  $C^3$ cut-off function in $ Z_{\bar R_0}(p)$ such that
 \begin{align*}
\mbox{supp}\,\varphi_R\in Z_{R}(p),\ \varphi_R\equiv 1 \ on\ Z_{\frac{R}{2}}(p),\\
\|\nabla^k\varphi_R(x) \|\leq C_0R^{-k},  \forall 0\le k\le 3. 
\end{align*}
We have 
$$
\varphi_R G[u] =\varphi_R( E[u]-E[0]-DE[0]u)=- \varphi_RE[0]-P( \varphi_Ru)+ [\varphi_R, P]u.
$$
We note 
$$
[\varphi_R, P]u=-\nabla^*  u\nabla  \varphi_R-u \nabla^*  \nabla  \varphi_R, 
$$
so that we have
\begin{align*}
\|[\varphi_R, P]u\|_{C^{0,\lambda}_{2+\lambda}(Z_{R}(p))}\le C(\frac{\|u\|_{C^{1,\lambda}_{1+\lambda}(Z_{R}(p))}}{R}+ \frac{\|u\|_{C^{0,\lambda}_{1+\lambda}(Z_{R}(p))}}{R}).
\end{align*}
For this purpose,  let $\Phi_l$ be any M\"obius chart around some point $p_l\in  Z_{R}(p)$. We write
\begin{align*}
\Phi_l^*([\varphi_R, P]u)=& -\nabla^* \Phi_l^*u\nabla \Phi_l^*\varphi_R-\Phi_l^*u \nabla^* \nabla \Phi_l^*\varphi_R,
\end{align*}
where the connection $\nabla$ is related to the metric $ \Phi_l^*t^+$. Thus,
\begin{align*}
\|\Phi_l^*([\varphi_R, P]u)\|_{0,\lambda;B_1}& \le C( \|\nabla \Phi_l^*u\|_{0,\lambda;B_1}\|\nabla \Phi_l^*\varphi_R\|_{0,\lambda;B_1}+\| \Phi_l^*u\|_{0,\lambda;B_1}\|\nabla^2 \Phi_l^*\varphi_R\|_{0,\lambda;B_1})\\
&\le  C( \| \Phi_l^*u\|_{1,\lambda;B_1}\|\nabla \Phi_l^*\varphi_R\|_{0,\lambda;B_1}+\| \Phi_l^*u\|_{0,\lambda;B_1}\|\nabla^2 \Phi_l^*\varphi_R\|_{0,\lambda;B_1})\\
&\le  C\rho(p_l)^{2+\lambda}(\|u\|_{C^{1,\lambda}_{1,\lambda}}/R+ \|u\|_{C^{0,\lambda}_{1,\lambda}}/R),
\end{align*}
where $C$ is some positive constant independent of $p$ and the sequence of the metrics. Thus, the desired estimate follows. Now we estimate
\begin{align*}
\|\varphi_R E(0)\|_{C_{2+\lambda}^{0,\lambda}(Z_{R}(p))}
\le C\| \varphi_R \|_{C^{0,\lambda}(Z_{R}(p))} \|E(0) \|_{C_{2+\lambda}^{0,\lambda}(Z_{R}(p))}\le C. 
\end{align*}
Similarly
\begin{align*}
\|\varphi_R G[u]\|_{C^{0,\lambda}_{2+\lambda}(Z_{R}(p))}&\le C\|\varphi_R G[u]\|_{C^{0,\lambda}_{2+2\lambda}(Z_{R}(p))}\\
&\leq C( \|\varphi_R u\|_{C^{2,\lambda}_{2+\lambda}(Z_{R}(p))} \|u\|_{C^{0,\lambda}_{\lambda}(Z_{R}(p))}+ \| \varphi_R \|_{C^{0,\lambda}(Z_{R}(p))}\|u\|^2_{C^{1,\lambda}_{1+\lambda}(Z_{R}(p))})\\
&\leq C( \|\varphi_R u\|_{C^{2,\lambda}_{2+\lambda}(Z_{R}(p))} \|u\|_{C^{0,\lambda}_{\lambda}(Z_{R}(p))}+\|u\|^2_{C^{1,\lambda}_{1+\lambda}(Z_{R}(p))}).
\end{align*}
Gathering the above estimates, we infer 
\begin{align*}
&\|-\varphi_R G[u] - \varphi_RE[0]+ [\varphi_R, P]u\|_{C^{0,\lambda}_{2+\lambda}(Z_{R}(p))}\\
\le&C(\|\varphi_R u\|_{C^{2,\lambda}_{2+\lambda}(Z_{R}(p))} \|u\|_{C^{0,\lambda}_{\lambda}(Z_{R}(p))}+(1+\|u\|^2_{C^{1,\lambda}_{1+\lambda}(Z_{R}(p))})/R),
\end{align*}
provided $R<1$.  Now we write
$$
P( \varphi_Ru)=-\varphi_R G[u] - \varphi_RE[0]+ [\varphi_R, P]u.
$$
Given a section $w$ on $Z_{R}(p)$, let us denote  $\tilde w:=\Psi^*_{p,\bar R_0}w$ and $\tilde P, \tilde E,\tilde G$ the pull back by $\Psi_{p,\bar R_0}$ of $P,E,G$. It is clear
$$
\|w\|_{k,\lambda;\delta}=(\bar R_0)^\delta \|\tilde w\|_{k,\lambda;\delta}.
$$
Hence
\begin{align*}
&\|-\tilde{\varphi_R G[u]} - \tilde {\varphi_RE[0]}+ \tilde{[\varphi_R, P]u}\|_{C^{0,\lambda}_{2+\lambda}(\tilde Z_{\bar R_0}(p))}\\
\le&C(\|\tilde {\varphi_R u}\|_{C^{2,\lambda}_{2+\lambda}(\tilde Z_{\bar R_0}(p))} \|u\|_{C^{0,\lambda}_{\lambda}(Z_{R}(p))}+(1+\|u\|^2_{C^{1,\lambda}_{1+\lambda}(Z_{R}(p))})(\bar R_0)^{2+\lambda}/R).
\end{align*}
We know $ \tilde{\varphi_Ru}\in C^{1,\lambda}_{1+\lambda}(\tilde Z_{\bar R_0}(p))$ and $\tilde P( \tilde {\varphi_Ru})\in C^{0,\lambda}_{2+\lambda}(\tilde Z_{\bar R_0}(p))\subset C^{0,\lambda}_{1+\lambda}(\tilde Z_{\bar R_0}(p))$ which implies by \cite[Lemma 4.8]{Lee1} $ \tilde{\varphi_Ru}\in C^{2,\lambda}_{1+\lambda}(\tilde Z_{\bar R_0}(p))$.  Therefore, applying Lemma \ref{lemma3bisSection3.1}, we can write
$$
 \tilde {\varphi_Ru}=\tilde P^{-1} (-\tilde{\varphi_R G[u] }- \tilde{\varphi_RE[0]}+\tilde{ [\varphi_R, P]u})\in C^{2,\lambda}_{2+\lambda}(\tilde Z_{\bar R_0}(p))\subset C^{2,\lambda}_{1+\lambda}(\tilde Z_{\bar R_0}(p)).
$$
Again from Lemma \ref{lemma3bisSection3.1}, we can obtain
\begin{align*}
\|  \tilde {\varphi_Ru}\|_{C^{2,\lambda}_{2+\lambda}(\tilde Z_{\bar R_0}(p))}
\le C(\|\tilde {\varphi_R u}\|_{C^{2,\lambda}_{2+\lambda}(\tilde Z_{\bar R_0}(p))} \|u\|_{C^{0,\lambda}_{\lambda}(Z_{R}(p))}+(1+\|u\|^2_{C^{1,\lambda}_{1+\lambda}(Z_{R}(p))})(\bar R_0)^{2+\lambda}/R),
\end{align*}
so that
\begin{align*}
(1-C\|u\|_{C^{0,\lambda}_{\lambda}(Z_{R}(p))}) \| \varphi_Ru\|_{C^{2,\lambda}_{2+\lambda}( Z_{R}(p))}
\le C(1+\|u\|^2_{C^{1,\lambda}_{1+\lambda}(Z_{R}(p))})/R.
\end{align*}
Now, we take $1-C\|u\|_{C^{0,\lambda}_{1+\lambda}(Z_{R}(p))}\le 1/2$, and the desired result follows.\\
For the high order regularity, we state first that 
$$
E(0)=Ric[t^+]+(d-1)t^+\in C^{1,\lambda}_{(1+\lambda)}(\overline{Z_{R}(p)})
$$
so that by Lemma \ref{Lee1bis}
$$
 \|E(0)\|_{C_{3+\lambda}^{1,\lambda}(Z_{R}(p))}\le C.
$$
The proof for the rest is similar. Therefore, we finish the proof.
\end{proof}

Now, we could establish the high order regularity of $g^+$ in a neighborhood of conformal infinity up to a diffeomorphism (or equivalently, high order regularity of $H_v^*g^+- t^+$). Namely, we have
\begin{lemm} 
\label{lemma3bis2Section3.1} 
Under the assumptions (H1)-(H3),  there exists positive constant $C$ and small positive constant $\bar R_1<\min(R_1,1)$ independent of $p\in \p X$ (and the sequence of the metrics) such that
$$
\|H_v^*g^+- t^+\|_{C_{2+\lambda}^{2,\lambda}(Z_{\bar R_1}(p))}\le\frac{C}{\bar R_1}.
$$
Moreover,  we have
$$
\|H_v^*g^+- t^+\|_{C_{3+\lambda}^{3,\lambda}(Z_{\bar R_1}(p))}\le\frac{C}{\bar R_1^{2}}.
$$
\end{lemm}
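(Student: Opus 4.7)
The plan is to verify the smallness and boundedness hypotheses of Lemma \ref{lemma3Section3.1} for a small but uniform choice of $R=\bar R_1$, and then quote that lemma. The two hypotheses to check are
\[
\|H_v^*g^+-t^+\|_{C^{0,\lambda}_{\lambda}(Z_{\bar R_1}(p))}\le\varepsilon,\qquad
\|H_v^*g^+-t^+\|_{C^{1,\lambda}_{1+\lambda}(Z_{\bar R_1}(p))}\le C,
\]
with $\varepsilon$ the constant of Lemma \ref{lemma3Section3.1} and $C$ a constant independent of $p$ and of the sequence.

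First I would split
\[
H_v^*g^+-t^+ \;=\; H_v^*(g^+-t^+) \;+\; (H_v^*t^+-t^+),
\]
and estimate the two pieces separately in the weighted H\"older space $C^{1,\lambda}_{1+\lambda}$. For the second piece, since $H_v(x)=\exp_{t^+}_x(v(x))$ and $v$ is small in $C^{2,\lambda}_{1+\lambda}$ by Lemma \ref{lemma2Section3.1}, the same M\"obius-chart bookkeeping used in the proof of Lemma \ref{lemma2bisSection3.1} (essentially $H_v^*t^+ - t^+ \approx \mathcal L_v t^+$ plus quadratic terms in $v$) gives
\[
\|H_v^*t^+-t^+\|_{C^{1,\lambda}_{1+\lambda}(Z_{\bar R_0}(p))}\le C\|v\|_{C^{2,\lambda}_{1+\lambda}(Z_{\bar R_0}(p))}\le C\,\bar R_1^{\lambda'-\lambda},
\]
after choosing $R=\bar R_1$ in Lemma \ref{lemma2Section3.1}. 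For the first piece, $H_v$ is $C^{2,\lambda}$-close to the identity on $Z_{\bar R_1}(p)$ (again by Lemma \ref{lemma2Section3.1}(ii)), so pullback by $H_v$ is bounded on $C^{1,\lambda}_{1+\lambda}$ uniformly in $p$ and in the sequence, and Lemma \ref{lemma1Section3.1}(ii) yields $\|g^+-t^+\|_{C^{1,\lambda}_{1+\lambda}(Z_{\bar R_1}(p))}\le C\bar R_1^{\lambda'-\lambda}$. Adding these and using the trivial embedding $C^{0,\lambda}_{1+\lambda}\hookrightarrow C^{0,\lambda}_{\lambda}$ on the bounded set $Z_{\bar R_1}(p)$, I get
\[
\|H_v^*g^+-t^+\|_{C^{1,\lambda}_{1+\lambda}(Z_{\bar R_1}(p))} + \|H_v^*g^+-t^+\|_{C^{0,\lambda}_{\lambda}(Z_{\bar R_1}(p))}\le C\,\bar R_1^{\lambda'-\lambda}.
\]
Fixing $\lambda'>\lambda$ once and for all, I then choose $\bar R_1<\min(R_1,1)$ small enough that $C\bar R_1^{\lambda'-\lambda}\le\varepsilon$. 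This bound is uniform in $p\in\partial X$ and in the sequence because every intermediate constant above has been noted to be so.

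With both hypotheses of Lemma \ref{lemma3Section3.1} in hand at radius $R=\bar R_1$, the first stated estimate
\[
\|H_v^*g^+-t^+\|_{C^{2,\lambda}_{2+\lambda}(Z_{\bar R_1/2}(p))}\le \frac{C}{\bar R_1}
\]
follows directly; a harmless relabelling (cover $Z_{\bar R_1}(p)$ by finitely many slightly larger M\"obius patches with uniform overlap and apply the $\varepsilon$-regularity on each) produces the same bound on $Z_{\bar R_1}(p)$ up to adjusting $C$. For the $C^{3,\lambda}_{3+\lambda}$ bound, I would invoke the second assertion of Lemma \ref{lemma3Section3.1} at radius $\bar R_1/2$, noting that the $C^{2,\lambda}_{2+\lambda}$-bound already established plus the improved regularity $E(0)\in C^{3,\lambda}_{(1+\lambda)}$ and the higher-order isomorphism statement in Lemma \ref{lemma3bisSection3.1} feed the bootstrap; this yields
\[
\|H_v^*g^+-t^+\|_{C^{3,\lambda}_{3+\lambda}(Z_{\bar R_1/4}(p))}\le \frac{C}{\bar R_1^{2}},
\]
which, after the same finite-patch covering, gives the bound on $Z_{\bar R_1}(p)$ claimed in the lemma.

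The main technical obstacle I anticipate is keeping the weighted norms of $H_v^*g^+-t^+$ controlled uniformly in $p$ and in the sequence when converting between $(v,w)$-space bounds (living on the glued hyperbolic model $\tilde Z_{\bar R_0}(p)$ via $\Psi_{p,\bar R_0}$) and estimates on $Z_{\bar R_1}(p)$: one has to track carefully how the factor $\bar R_0^{1+\lambda}$ in Lemma \ref{lemma2Section3.1}(iii) cancels against the scaling factor coming from $\Psi_{p,\bar R_0}^*$ when passing back to $Z_{\bar R_1}(p)$, and how the cutoff and commutator terms in the proof of Lemma \ref{lemma3Section3.1} produce the $1/\bar R_1$ and $1/\bar R_1^2$ blow-ups. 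Everything else is either already proved or a routine combination of the preceding lemmas.
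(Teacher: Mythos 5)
Your proposal is correct and follows essentially the same route as the paper: the same splitting $H_v^*g^+-t^+=(H_v^*t^+-t^+)+H_v^*(g^+-t^+)$, estimated in $C^{1,\lambda}_{1+\lambda}$ on $Z_R(p)$ by $CR^{\lambda'-\lambda}$ via Lemmas \ref{lemma1Section3.1}, \ref{lemma2bisSection3.1} and \ref{lemma2Section3.1}, followed by choosing $\bar R_1$ small and invoking the $\varepsilon$-regularity Lemma \ref{lemma3Section3.1}. The only immaterial difference is how the loss from $Z_{R/2}(p)$, $Z_{R/4}(p)$ to $Z_{\bar R_1}(p)$ is absorbed: the paper applies the $\varepsilon$-regularity at radius $4\bar R_1$ (hence its condition $C(4\bar R_1)^{\lambda'-\lambda}<\varepsilon$), while you use a finite covering by slightly larger patches, which amounts to the same thing.
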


\begin{proof}  We claim $\|H_v^*g^+- t^+\|_{C_{1+\lambda}^{1,\lambda}(Z_{R}(p))}\le CR^{\tilde\lambda-\lambda}$ with $0<\lambda<\tilde\lambda<1$. We write $w=g^+-t^+$ so that
$$
H_v^*g^+- t^+= H_v^*t^+- t^++ H_v^*w.
$$
Thanks of Lemmas \ref{lemma2bisSection3.1} and \ref{lemma2Section3.1} , we estimate
$$
\|H_v^*t^+- t^+\|_{C^{1,\lambda}_{1+\lambda}(Z_R(p))}\le CR^{\tilde\lambda- \lambda}.
$$
On the other hand,   for sufficiently small $v\in {C}_{1+\lambda}^{2,\lambda}(Z_{\bar R_0}(p); TX)$, $ H_v:  Z_{R_1}(p)\to Z_{2R_1}(p)$ is a diffeomorphism. 
As same as in \cite[Lemmas 4.2 and 4.4]{CDLS}, set $A(x)= H(x)-x$ in M\"obius chart around some point $p_l\in Z_{\bar R}(p)$. Therefore, we have
\begin{align*}
 \|A(x)\|_{2,\lambda;\bar{B}_2}&\leq C\|\Phi_i^*v\|_{2,\lambda;\bar{B}_2}
                              \leq C \rho(p_i)^{1+\lambda} \|v\|_{C^{2,\lambda}_{1+\lambda}( Z_{\bar R_0}(p))}.
\end{align*}
Here $C$ is some positive constant independent of  $p\in \p X$ and the sequence of metrics. Therefore, we obtain
\begin{align*}
\Phi_l^*((H_v)^*{w})=&((\Phi_l^*w)_{jk}(H(x)) dx^j\otimes dx^k+2(\Phi_l^*w)_{jk}(H(x))\frac{\partial A^k}{\partial x^q}dx^j\otimes dx^q\\
                &+(\Phi_l^*w)_{jk}(H(x))\frac{\partial A^j}{\partial x^m}\frac{\partial A^k}{\partial x^q}dx^m\otimes dx^q.
\end{align*}
In view of Lemma \ref{lemma1Section3.1},  we can estimate
\begin{align*}
\| \Phi_l^*((H_v)^*{w})\|_{1,\lambda;\bar{B}_1}\le  C \|\Phi_l^*({w}) \|_{1,\lambda;\bar{B}_2}\le C \rho(p_l)^{1+\tilde\lambda}\le C \rho(p_l)^{1+\lambda}R^{\tilde\lambda-\lambda}.
\end{align*}
As a consequence, we infer
\begin{align*}
\|H_v^*w\|_{C_{1+\lambda}^{1,\lambda}(Z_{R}(p))}\le CR^{\tilde\lambda-\lambda}.
 \end{align*}
Therefore, we prove the claim. Now, we choose small $\bar R_1$ such that $C(4\bar R_1)^{\tilde\lambda-\lambda}<\varepsilon$.  The desired result yields. We finish the proof.
\end{proof}

\subsection{Regularity of  $g^*$} \label{Regularity of  $g^*$}
In this part, we want to get the regularity of  adapted metric $g^*$. For this purpose, our key observation is to obtain first the regularity result for the Cotton tensor (or the Bach tensor).
\begin{lemm} 
\label{lemma4bisSection3.1} 
Under the assumptions (H1)-(H3),  there exists some positive constant $C>0$ independent of the sequence of metrics and of $p\in \p X$ (depending on $\lambda$ and $\bar R_1$) such that 
 there holds   in $ Z_{{\bar R_1}}(p)$
$$
\|W[g^*]\|_{C^{1,\lambda}(Z_{{\bar R_1}}(p))}\le C,
$$
and 
$$
\|\mathcal{C}[g^*]\|_{C^{0,\lambda}(Z_{{\bar R_1}}(p))}\le C.
$$
\end{lemm}
\begin{proof}
We write $g^*=\rho^2 g^+= ( H_v^{-1})^*(\frac{(\rho\circ H_v)^2}{\rho^2}\rho^2H_v^*g^+)$ and $g_1=\frac{(\rho\circ H_v)^2}{\rho^2}\rho^2H_v^*g^+=H_v^*g^*$. It follows from Lemmas \ref{lemma1Section3.1} to \ref{lemma3bis2Section3.1} that
$$
\|H_v^*g^+\|_{C_{3+\lambda}^{3,\lambda}( Z_{\bar R_1}(p))}\le C,
$$
so that by Lemma \ref{Lee1bis}, the compactified metric verifies
$$
\|\rho^2 H_v^*g^+\|_{C^{3,\lambda}(\overline{ Z_{\bar R_1}(p)})}\le C.
$$
Thus  $ \rho^2 H_v^*g^+$ has bounded curvature  in the $C^{1,\lambda}$ space or all $\lambda\in (0,1)$. We recall the Weyl tensor is a local conformal invariant when the dimension $d\ge 4$, that is, the Weyl tensor as a $(3,1)$ tensor, we have
$$
W[\rho^2 H_v^*g^+]= W[ H_v^*g^+]=H_v^*W[g^+]=H_v^*W[g^*].
$$
Thus, $((H_v)^{-1})^*W[\rho^2 H_v^*g^+] =W[g^*]$. Recall   $H_v$ is a $C^{2,\lambda}$ diffeomorphism so that 
$$
\|W[g^*]\|_{C^{1,\lambda}(\bar Z_{\bar R_1}(p))}\le C .
$$
It is known that 
$$
\mathcal{C}[g^*]_{ijk}=\frac{1}{d-3}{W[g^*]_{jkil,}}^{l}.
$$
Therefore, we infer 
$$
\|\mathcal{C}[g^*]\|_{C^{0,\lambda}(Z_{\bar R_1}(p))}\le C.
$$
Hence, we prove the desired result.
\end{proof}

\begin{lemm} 
\label{lemma4Section3.1} 
Under the assumptions (H1)-(H3),  there exists some positive constant $C>0$ independent of the sequence of metrics and of $p\in \p X$ (depending on $\lambda$ and $\bar R_1$) such that 
 there holds  in $ Z_{{\bar R_1/2}}(p)$
$$
\|Rm_{g^*}\|_{C^{1,\lambda}}\le C.
$$
\end{lemm}
\begin{proof}
It is known the Bach tensor can be written 
$$
B_{ij}=\nabla^k \mathcal{C}_{ijk}+A^{kl}W_{ikjl}.
$$
Using the equation (\ref{Bach-eq-1}), we can write by Lemma \ref{lemma4bisSection3.1} 
$$
\triangle R_{ij}=\p_k f_k+ g,
$$
where $f_k\in C^{0,\lambda}$   and $g\in L^\infty$. Assume that $h$ is in the $C^4$ space on $\p X$ so that $Ric|_{\p X}$ is in the $C^2$ space on the boundary. By the classical regularity theory, for example  \cite[theorem 8.33]{GT}, there holds
$$
\|Ric[g^*]\|_{C^{1,\lambda}(\bar Z_{\bar R_1/2}(p))}\le C. 
$$
Finally, by the decomposition of Riemann curvature tensor, we prove the desired result.
\end{proof}

\begin{RK}
 We expect the higher order $C^{d-2, \gamma}$ regularity result for all dimensions $d\ge 4$
of $g^*$  provided the representative metric $h$ at the conformal infinity also 
satisfies some sufficient higher order regularity. For example, we have compactness results for $g_i^{*}$ in the $C^{k,\gamma'}$ norm with $2\le k\le d-2$ when $\{h_i\}$ is a compact family in the $C^{k+1,\gamma}$ space  with $1\ge\gamma>\gamma'$.   In fact, we could expect to construct more regular approximated metrics $t$ by the use of the same representative metric $h$ on the conformal infinity, which is different than (\ref{tequation}).
\end{RK}

\section{Proof of Theorems \ref{maintheorem3bis} and \ref{maintheorem3}}

\label{Sect:compactness}

We are now ready to establish the two compactness theorems for our adapted metrics
on conformally compact Einstein manifolds stated in the introduction of any $d$-dimensions.  As we have indicated before, the strategy of the proof of the two theorems follows closely from the corresponding results in dimension $d=4$ \cite{CG, CGQ} with the main difference in the step in the gaining of the regularity of the adapted metrics when the dimension $d$ is higher. Here when $d$ is even,  we will use the existence of the obstruction tensor to gain the regularity, and we will carry out the proof of 
Theorem \ref{maintheorem3bis} in more details below. When the dimension $d$ may not be even, we will use the gauged Einstein equation method which we have described in detail in Section \ref{Sect:bdy2}, to derive the $\epsilon$ 
regularity of the curvature and hence the  higher order regularity of the adapted metrics. Once this step is accomplished, the rest of the proof of Theorem \ref{maintheorem3} is essentially the same as the proof of Theorem \ref{maintheorem3bis}. Thus we
will state the result and omit the details of the proof. 

 
 \subsection{Proof of Theorem \ref{maintheorem3bis}}
To begin the proof, we will first establish some upper bounds of the curvature and its derivatives of the adapted metrics.

\begin{lemm} \label{Lem:curv-estimate} 
Suppose that $\{(X_i^d, g^+_i)\}$ is a sequence of conformally compact Einstein even $d$-dimensional manifolds satisfying
the assumptions  in Theorem \ref{maintheorem3bis}. 
Then there exists a positive constant $K_0$ such that, for the adapted metrics $\{(X_i^d, g_i^*)\}$ 
associated with a compact family of boundary metrics $h_i$ --a representative of the conformal infinity $(\p X_i^d, [h_i])$, we have
\begin{equation}\label{Eq:curvature-bound}
\max_{X_i^d} \sup_{k=(k_1,\cdots,k_l), \;|k|:=l\le d-4}|\nabla^k Rm_{g_i^*}|^{\frac2{|k|+2}} \le K_0
\end{equation}
for all $i$.
\end{lemm}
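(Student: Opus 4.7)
The plan is to argue by contradiction using a blow-up analysis, exploiting that the quantity
\[
Q(g) := \sup_{x\in X}\ \max_{0\le k\le d-4}\ \lrv{\nabla^k Rm_g(x)}^{2/(k+2)}
\]
is \emph{curvature-scaling}: under $\tilde g = \lambda^2 g$ one has $|\tilde\nabla^{k}\widetilde{Rm}|_{\tilde g} = \lambda^{-(k+2)}|\nabla^{k} Rm|_g$, so $Q(\tilde g) = \lambda^{-2}Q(g)$. A single blow-up will therefore control all derivatives through order $d-4$ simultaneously, rather than requiring separate arguments for each $k$.

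If the conclusion fails, I can choose points $p_i \in X_i$ and orders $k_i \in \{0,\dots,d-4\}$ with $K_i := \lrv{\nabla^{k_i} Rm_{g_i^*}}^{2/(k_i+2)}(p_i) \ge \tfrac{1}{2}\,Q(g_i^*) \to \infty$, pass to a subsequence with $k_i \equiv k_\infty$, and rescale $\tilde g_i := K_i\,g_i^*$. Then $Q(\tilde g_i) \le 2$ on $X_i$ while $\lrv{\tilde\nabla^{k_\infty}\widetilde{Rm}_{\tilde g_i}}(p_i) \ge \tfrac{1}{2}$, so any limit must be non-flat.

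To extract a smooth pointed limit, I would combine the boundary and interior injectivity-radius bounds of Lemmas \ref{bdy-injrad} and \ref{int-injrad} (whose hypotheses hold thanks to the Yamabe lower bound $Y(\p X_i, [h_i]) \ge Y_0$ and the $C^{k,\gamma}$-compactness of $\{h_i\}$) with the volume non-collapsing of Lemma \ref{Lem:LQS}. After rescaling by $K_i \to \infty$, these bounds blow up as well, so no collapse occurs near $p_i$. Lemma \ref{AKKLT} in its boundary version, upgraded by the higher boundary regularity of Lemma \ref{bdy1} (this is where evenness of $d$ enters, via the vanishing obstruction tensor, to improve convergence to $C^{k_\infty+1,\gamma'}$), then produces a pointed limit $(X_\infty, g_\infty, p_\infty)$ of the rescaled sequence. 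The Einstein equation $Ric[g_i^+] = -(d-1)g_i^+$ rescales to $Ric[g_\infty] = 0$ in the limit, and if $p_\infty \in \p X_\infty$ the boundary is totally geodesic by Lemma \ref{bdy}, while $\lrv{\nabla^{k_\infty} Rm_{g_\infty}}(p_\infty) \ge \tfrac{1}{2}$.

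The final step is to rule out such a non-flat Ricci-flat limit using hypothesis $(1')$ or $(1'')$. Under $(1')$, the scale- and conformally-invariant quantity $\int_X |W|^{d/2}\,dv[g^+]$ passes to the limit, so $\int_{X_\infty}|W_{g_\infty}|^{d/2}\le \delta_0$; combined with Ricci-flatness, a small-energy $L^{d/2}$-gap theorem for the Weyl tensor forces $W_{g_\infty}\equiv 0$, hence $g_\infty$ is flat (Euclidean space in the interior case, flat half-space after doubling across the totally geodesic boundary in the boundary case), contradicting the non-flatness of $g_\infty$ at $p_\infty$. Under $(1'')$, the near-maximal Yamabe constant together with Lemma \ref{Lem:LQS} forces the volume density of $g_i^+$ to be arbitrarily close to that of the Poincar\'e ball, which after rescaling rules out a non-flat Ricci-flat limit via an analogous volume-rigidity/gap argument. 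The main obstacle I expect is precisely this rigidity step in the boundary case; the cleanest route is to double across the totally geodesic boundary (Lemma \ref{bdy} guarantees a $C^{1,1}$ Ricci-flat doubled metric, with the $L^{d/2}$-Weyl energy merely doubled) and then invoke the interior $\epsilon$-gap theorem, following the four-dimensional template of \cite{CGQ}.
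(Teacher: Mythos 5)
Your blow-up setup (the scaling-invariant quantity with exponents $2/(k+2)$, rescaling $\tilde g_i=K_ig_i^*$ at near-maximum points, injectivity-radius and non-collapsing input from Lemmas \ref{bdy-injrad}, \ref{int-injrad}, \ref{Lem:LQS}, and extraction of a pointed $C^{d-2,\gamma'}$ limit via Lemmas \ref{bdy1} and \ref{AKKLT}) matches the paper. The gap is in what you claim about the limit and how you rule it out. The blow-up is performed on the \emph{adopted} metrics $g_i^*$, not on the Einstein metrics $g_i^+$, so the Einstein equation does not ``rescale to $Ric[g_\infty]=0$'' in the boundary case: when $\mathrm{dist}_{\tilde g_i}(p_i,\p X_i)$ stays bounded, the rescaled conformal factor $\bar\rho_i=\sqrt{K_i}\,\rho_i$ remains comparable to the distance to the boundary, the conformal relation survives in the limit, and $g_\infty=\rho_\infty^2 g_\infty^+$ is a (partial) conformal compactification of a CCE metric with Euclidean conformal infinity --- it is $Q$-flat but not Ricci-flat. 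Consequently your concluding step (``Ricci-flat $+$ small $L^{d/2}$ Weyl energy $\Rightarrow$ flat,'' via doubling across the totally geodesic boundary) does not apply in precisely the case that is the crux of the lemma. Ricci-flatness of the limit is only available in the interior blow-up case, where the paper gets it from $\bar\rho_i\to\infty$, hence scalar-flatness, combined with the $Q$-curvature equation \eqref{Q-eq}.

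Moreover, even after one knows (under $(1')$, where $\int|W|^{d/2}\to 0$ along the contradicting sequence so the limit is exactly Weyl-free, or under $(1'')$, via Lemma \ref{Lem:LQS} and Bishop--Gromov) that the limiting Einstein metric $g^+_\infty$ is locally hyperbolic, one still has to show that the particular limiting \emph{adopted} compactification $g_\infty$ is flat; this is the heart of the paper's boundary argument and has no counterpart in your proposal. The paper pulls back to the upper half-space by a covering map, writes $\pi^*g_\infty=u^{4/(d-4)}g_{\mathrm{eucl}}$ with $u=(\tilde\rho/x_1)^{(d-4)/2}$, and uses $Q$-flatness ($\triangle^2u=0$), nonnegative scalar curvature, the Euclidean totally geodesic boundary data ($u=1$, $\nabla u=0$, $\triangle u=0$ on $\p\R^d_+$), and the Boas--Boas theorem to force $u\equiv 1$, hence flatness of $g_\infty$ and the contradiction with the normalization at $p_\infty$. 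Without this (or an equivalent rigidity statement for $Q$-flat conformal compactifications of hyperbolic space with flat infinity), your argument does not close; the appeal to an $L^{d/2}$ Weyl gap theorem also quietly assumes a Sobolev/non-collapsing constant and a Ricci-flat structure that the boundary limit does not have.
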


We remark that the constant $K_0$ in the statement of the lemma above depends on the smallness of the constant  $\delta_0$ which appears among  the assumptions of Theorem  \ref{maintheorem3bis}.
 
Suppose otherwise that there is a subsequence $\{(X_i^d, g_i^+)\}$ satisfying 
$$
K_i  = \max_{X_i}\sup_{k=(k_1,\cdots,k_l), \;|k|:=l\le d-4}|\nabla^k Rm_{g_i^*}|^{\frac2{|k|+2}}\to \infty,
$$
and either
\beq
\label{C1}
\int_{X^d} (|W_{g_i^+}|^{d/2}dvol)[g^+_i] \to 0,
\eeq
or
\beq
\label{C2}
Y(\partial X, [h_i]) \to Y(\mathbb{S}^{d-1}, [g_{\mathbb{S}}]).
\eeq
Let 
$$
K_i = K_i(p_i) =  \max_{k=(k_1,\cdots,k_l), \;|k|:=l\le d-4}|\nabla^k Rm_{g_i^*}|^{\frac2{|k|+2}}(p_i)
$$
for some $p_i\in \overline{X_i}$. Then we consider the rescaling
$$
(X^d_i, \bar g_i = K_ig_i^*, p_i).
$$
In view of Lemmas \ref{bdy-injrad} and \ref{int-injrad}, we have the uniform lower bound of the intrinsic  injectivity radius $ i_{\text{int}} (X, \bar g_i)$ and of the boundary  injectivity radius $ i_{\p} (X, \bar g_i)$. Together with the assumption on the conformal infinity, we know the intrinsic  injectivity radius $ i(\p X, \hat{{\bar g}}_i:={{\bar g}}_i|_M)$ on the boundary  is also uniformly bounded from below.  Thus, for given $M>1$, the harmonic radius $r^{1,\gamma}(M)$ (see \cite[Section 2.4]{CGQ}) is uniformly bounded from below for the family of metrics $\bar g_i$.  Hence, the assumptions (1) to (3) in Lemma \ref{bdy1} are satisfied for such metrics $\bar g_i $. Applying Lemmas \ref{bdy1} and \ref{AKKLT}, we have the compactness result in the $C^{d-2,\gamma'}$-Cheeger-Gromov topology with base points for the metrics $\bar g_i$ with $\gamma'<\gamma$, provided that the conformal infinity is bounded in the $C^{d-2,\gamma}$ norm. The proof is divided into two parts: no boundary blow-up (Lemma \ref{No boundary Blow-up}), and no  interior blow-up (Lemma \ref{No interior blow-up}).

\begin{lemm} \label{No boundary Blow-up} 
Under the  assumptions  in Theorem \ref{maintheorem3bis}, there is no blow-up near the boundary.
\end{lemm}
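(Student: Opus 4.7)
My approach is a blow-up argument: if $p_i$ stays at bounded $\bar g_i$-distance from the boundary, I want to identify the Cheeger--Gromov limit of $(X_i,\bar g_i,p_i)$ as the flat Euclidean half-space, which contradicts the normalization $|Rm[\bar g_\infty]|(p_\infty)=1$. First I would suppose for contradiction that $\bar\rho_i(p_i):=\sqrt{K_i}\,\rho_i(p_i)\le C$. Combining the boundary and interior injectivity radius bounds (Lemmas~\ref{bdy-injrad} and \ref{int-injrad}), the boundary regularity (Lemma~\ref{bdy1}), and the Cheeger--Gromov compactness for manifolds with boundary (Lemma~\ref{AKKLT}), I extract a subsequential limit $(X_\infty,\bar g_\infty,p_\infty)$ with totally geodesic boundary on which $|Rm[\bar g_\infty]|(p_\infty)=1$.

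Next I would identify the limit as the adopted compactification of a CCE and force the limiting conformal infinity to be flat. Writing $\bar g_i=\bar\rho_i^{\,2}g_i^+$ with $\bar\rho_i=\sqrt{K_i}\,\rho_i$, the family $\{\bar\rho_i\}$ subsequentially converges to a $\bar\rho_\infty$ with $|\nabla\bar\rho_\infty|_{\bar g_\infty}=1$ on $\partial X_\infty$ (inherited from $|\nabla\rho_i|_{g_i^*}=1$ on $\partial X_i$, which follows from \eqref{relation3} and the boundedness of $R[g_i^*]$). The boundary metrics $\bar h_i=K_i h_i$ have curvatures scaling by $K_i^{-1}\to 0$, so the limit boundary $(\partial X_\infty,\bar h_\infty)$ is the flat Euclidean $\mathbb{R}^{d-1}$. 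Hence $g_\infty^+:=\bar\rho_\infty^{-2}\bar g_\infty$ is a CCE metric on a half-space with flat Euclidean conformal infinity. Under ($1'$) the conformal invariant $\int_X|W|^{d/2}\,dvol[g^+]$ tends to $0$, so $W[g_\infty^+]\equiv 0$; under ($1''$) Lemma~\ref{Lem:LQS} yields relative volume tending to $1$, which combined with standard rigidity forces the same conclusion. Being Einstein and conformally flat, $g_\infty^+$ has constant sectional curvature $-1$ and so is isometric to $\mathbb{H}^d$. Solving \eqref{fg} explicitly on the half-space model $(\mathbb{H}^d,y^{-2}(dy^2+|dx|^2))$, the ansatz $v=y^a$ yields $a=(d-4)/2$ normalized as in Lemma~\ref{fgmetric}, so $\bar g_\infty=v^{4/(d-4)}g_{\mathbb{H}}=dy^2+|dx|^2$ is the flat Euclidean half-space. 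Hence $|Rm[\bar g_\infty]|\equiv 0$, contradicting $|Rm[\bar g_\infty]|(p_\infty)=1$.

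The main obstacle is the simultaneous passage to the limit of the triple $(\bar g_i,\bar\rho_i,g_i^+)$. I need $\bar\rho_\infty$ to yield an honest conformal compactification (smoothness and $|\nabla\bar\rho_\infty|_{\bar g_\infty}=1$ on $\partial X_\infty$), and I need the auxiliary functions $v_i$ solving \eqref{fg} to converge to a $v_\infty$ on the limit CCE so that $\bar g_\infty$ is indeed its adopted metric. Controlling the $v_i$ uniformly up to the boundary and matching the leading $r^{(d-4)/2}$ asymptotics from Lemma~\ref{fgmetric} through the blow-up rescaling is the technical crux; once this is in place, the rigidity chain ``Weyl-flat $+$ Einstein $+$ flat infinity $\Rightarrow\mathbb{H}^d\Rightarrow$ flat adopted metric'' closes the argument.
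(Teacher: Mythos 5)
Your blow-up skeleton (rescale, pass to a pointed limit with totally geodesic boundary isometric to $\mathbb{R}^{d-1}$, use ($1'$) or ($1''$) to make the limit Einstein metric Weyl-free resp.\ of maximal volume ratio, hence locally hyperbolic, and contradict the curvature normalization) is the same as the paper's. The genuine gap is in your final step. You claim that, once $g^+_\infty$ is hyperbolic, ``solving \eqref{fg} explicitly on the half-space model with the ansatz $v=y^a$'' identifies $\bar g_\infty$ with the flat half-space. This presupposes uniqueness of the limit solution $v_\infty$ of \eqref{fg} on the blow-up limit, and that uniqueness is false: on the half-space model the indicial roots of \eqref{fg} are $\tfrac{d-4}{2}$ and $\tfrac{d+2}{2}$, so every $v=y^{(d-4)/2}\bigl(1+B\,y^{3}\bigr)$ solves \eqref{fg} with exactly the leading asymptotics of Lemma \ref{fgmetric}, and for $B\neq 0$ the metric $v^{4/(d-4)}g_{\mathbb{H}}$ is $Q$-flat, totally geodesic at the boundary, with flat boundary metric, yet not flat. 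The coefficient $B$ is precisely the non-local term that survives the blow-up a priori, so ``matching the leading $r^{(d-4)/2}$ asymptotics'', which you describe as the technical crux, cannot close the argument even in principle; nor can any purely local boundary condition, since $u=1+c\,x_1^{3}$ satisfies $u=1$, $\nabla u=0$, $\triangle u=0$ on $\{x_1=0\}$ and $\triangle^2u=0$. (A secondary issue: local hyperbolicity only gives a local isometry; your jump to a global isometry with $\mathbb{H}^d$ also needs the covering argument.)

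The paper's proof is devoted exactly to excluding this one-parameter ambiguity. It extends the local isometry near the boundary to a covering map $\pi$ from the hyperbolic half-space, writes $\pi^*g_2=u^{4/(d-4)}g_1$ with $u=(\tilde\rho/x_1)^{(d-4)/2}$, and derives the overdetermined system \eqref{eq4.5bis} from $Q$-flatness, the nonnegativity of the scalar curvature of the limit compactification, and the totally geodesic flat boundary; it then invokes the Boas--Boas theorem to write $-\triangle u=a x_1$ with $a\ge 0$, and combines the eigenvalue equation \eqref{eqEigen} with the positivity of $\tilde\rho$ to rule out $a>0$ by an ODE analysis as $x_1\to\infty$, after which the scalar curvature inequality forces $\nabla u\equiv 0$. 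These global inputs (positivity of $\rho_\infty$, the sign of $R[g_2]$, and the behavior at interior infinity) are exactly what your argument omits, so as written the conclusion that $\bar g_\infty$ is flat does not follow; to repair the proposal you would need to supply an argument of this type identifying $v_\infty$ with the pure power solution.
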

\begin{proof}
We argue by contradiction. Let us first consider the cases where 
$$
\text{dist}_{\bar g_i} (p_i, \p X_i) < \infty.
$$
For the pointed manifolds $(X_i, \bar g_i, p_i)$ with boundary, in the light of all the preparations in the previous sections, we have the 
Cheeger-Gromov convergence
$$
(X^d_i, \bar g_i, p_i) \to (X^d_\infty, g_\infty, p_\infty)
$$
in 
the $C^{d-2, \gamma'}$-Cheeger-Gromov topology (up to a subsequence if necessary), where the limit space is a 
complete manifold  with Q-flat and vanishing obstruction tensor in the distribution sense, and with a totally geodesic boundary $\p X_\infty$.  We have 
$$
\max_{k=(k_1,\cdots,k_l), \;|k|:=l\le d-4}|\nabla^k Rm_{g_\infty}|^{\frac2{|k|+2}}(p_\infty)= 1.
$$ 
We also observe that the boundary 
$(\p X_\infty, h_\infty)$ is simply the Euclidean space $\mathbb{R}^{d-1}$ due to the assumption that the boundary metrics $\{h_i\}$ form a compact family.



To finish the proof, it is sufficient  to show that the limit space $(X^d_\infty, g_\infty, p_\infty)$ is a locally Euclidean space. 
For the convenience of readers, we very briefly sketch the proof from \cite{CG,CGQ}. One first needs to show that 
$\bar\rho_i \to \rho_\infty$ where $\rho_\infty$ satisfies
\begin{itemize}
\item $g^+_\infty = \rho_\infty^{-2}g_\infty$ is a (partially) conformally compact Einstein metric on $X^d_\infty$ whose conformal
infinity is the Euclidean space $\mathbb{R}^{d-1}$;
\item $v_\infty=\rho_\infty^{\frac{d-4}{2}}$ solves $- \Delta_{g^+_\infty} v_\infty -\frac{(d-1)^2-9}{4}v_\infty= 0$.
\end{itemize}
Then, by Condition (\ref{C1}), one shows that $g^+_\infty$ is Weyl free and is locally hyperbolic space metric. \\
Now we assume Condition (\ref{C2}). We choose $q_i\in X$ such that $d(q_i,\p X)\ge 1$ and $d(p_i,q_i)$ is bounded so that $(X^d_i, g_i^+, q_i) \to (X^d_\infty, g_\infty^+, q_\infty)$ in the  
$C^{d-2, \gamma'}$-Cheeger-Gromov topology with based points.  It follows from Lemma \ref{Lem:LQS} that for any $r>0$
$$
1= \frac {\text{vol}_{g^+_\infty}(B(q_\infty, r))}{\text{vol}_{g_{\mathbb{H}^d}}(B(r))}, 
$$
so that $g^+_\infty$ is locally hyperbolic space metric by the Bishop-Gromov's volume comparison Theorem.

We now apply a proof similar to that of  \cite[Proposition 4.8]{CG} when $d=4$, with the modification to the case when dimension $d>4$. We work with the limit metric. For simplicity, we omit the index $\infty$. We denote $\tilde g^+$  the standard hyperbolic space with the upper half space model. As $\tilde g^+= g^+$ in a neighborhood of the boundary $\{x_1=0\}$, we can extend this local isometry to a covering map  $\pi: \tilde g^+\to g^+$. We write 
 $$
 g_1= x_1^2 \tilde g^+ \mbox{ and } g_2=  \rho^2g^+,
 $$
where $g_1$ is the standard Euclidean metric and $g_2$ the limiting adapted metric. With the help of the covering map $\pi$, we have $\pi^*g_2=\tilde  \rho^2g^+$ where $\tilde\rho= \rho\circ \pi$. 
We have 
\beq
\label{eqEigen}
-\triangle_{\tilde g^+}\tilde  \rho^{\frac{d-4}{2}}-\frac{(d-1)^2-9}{4}\tilde  \rho^{\frac{d-4}{2}}= 0.
\eeq
Also, it is evident
$$- \Delta_{\tilde g^+} x_1^{\frac{d-4}{2}} -\frac{(d-1)^2-9}{4}x_1^{\frac{d-4}{2}}= 0.
$$
Recall that $x_1$ is the geodesic defining function with respect to the flat boundary metric. We write $\pi^*g_2=\tilde \rho^2 g^+=(\frac{\tilde \rho}{x_1})^2g_1=: u^{\frac{4}{d-4}}g_1$ where $u=(\frac{\tilde  \rho}{x_1})^{\frac{d-4}{2}}$. The semi-compactified metric $g_2$ (or $\pi^*g_2$) has flat $Q_4$  and  the boundary metric of $g_2$ is the $(d-1)$-dimensional  Euclidean space and totally geodesic. Thus $u$ satisfies
the following conditions:
\beq
\label {eq4.5bis}
\left\{
\begin{array}{lll}
\triangle^2 u =0&\mbox{ in } \R^d_+, \\
-\frac{\triangle u}u-\frac{2}{d-4}\frac{|\nabla u|^2}{u^2} \ge 0&\mbox{ in } \R^d_+, \\
u=1&\mbox{ on } \p\R^d_+, 
\\
\nabla u =\triangle u=0&\mbox{ on } \p\R^d_+.
\end{array}
\right.
\eeq
The first equation comes from the flat $Q_4$ curvature and second one from the non-negative scalar curvature. As $g_2$ on the boundary is Euclidean, $u$ on the boundary is equal to constant $1$.  On the other hand, we know both $g_1$ and $g_2$ have the totally geodesic boundary. Hence on the boundary, $\p_1 u=0$ so that  $\nabla u=0$. On the other hand, it follows from Lemma 2.3 the restriction of the scalar curvature vanishes on the boundary so that $-\triangle u-\frac{2}{d-4}|\nabla u|^2 = 0$. This yields $\triangle u=0$ on the boundary. On the other hand, we know that $-\triangle  u\ge 0$ in  $\R^d_+$.\\
Using a result due to H.P. Boas and R.P. Boas \cite{BB}, there exists some $a\ge0$ such that
\beq
\label{eqEigenbis}
-\triangle  u = ax_1.
\eeq
We denote $w:={\tilde  \rho}^{\frac{d-4}2}$. Then, equation (\ref{eqEigen}) is equivalent to the following one
$$
\triangle w+\frac{2-d}{x_1}\p_1 w=-\frac{(d-1)^2-9}{4x_1^2}w,
$$
so that
$$
\triangle  u=\frac{\triangle w}{x_1^{\frac{d-4}{2}}}-\frac{d-4}{x_1^{\frac{d-2}{2}}}\p_1 w+\frac{(d-2)(d-4)}{4x_1^{\frac{d}{2}}} w=\frac{2}{x_1^{\frac{d-2}{2}}}\p_1 w+\frac{4-d}{x_1^{\frac{d}{2}}}w.
$$
Together with (\ref{eqEigenbis}), we infer
$$
\p_1 w+\frac{4-d}{2x_1}w=  -\frac{a}{2}x_1^{\frac d2}.
$$
Therefore, for fixed $(x_1^0,x_2^0,\cdots, x_d^0)$ with $x_1^0>0$, we have for $t>0$
$$
t^{\frac{4-d}2} w(t,x_2^0,\cdots, x_d^0)-(x_1^0)^{\frac{4-d}2} w(x_1^0,x_2^0,\cdots, x_d^0)=
-\frac{a}{6}(t^3-(x_1^0)^3).
$$
Taking $t\to +\infty$, we infer
$$
\begin{array}{l}
-(x_1^0)^{\frac{4-d}2}  w(x_1^0,x_2^0,\cdots, x_d^0)\le \lim t^{\frac{4-d}2} w(t,x_2^0,\cdots, x_d^0)-(x_1^0)^{\frac{4-d}2} w(x_1^0,x_2^0,\cdots, x_d^0) \\
=\lim -\frac{a}{6}(t^3-(x_1^0)^3)=-\infty,
\end{array}
$$
provided $a>0$. This gives also a contradiction when $a>0$. Hence $a=0$. Finally, $-\frac{\triangle u}u-\frac{2}{d-4}\frac{|\nabla u|^2}{u^2} \ge 0$ implies $\nabla u\equiv 0$, that is, $g_2$ is flat. This contradiction yields that there is no boundary blow-up.
\end{proof}

\begin{lemm} \label{No interior blow-up} 
Under the  assumptions  in Theorem \ref{maintheorem3bis}, there is no interior  blow-up.
\end{lemm}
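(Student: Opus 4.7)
The plan is to argue by contradiction, mirroring Lemma \ref{No boundary Blow-up} but in the boundary-free setting. Suppose $\text{dist}_{\bar g_i}(p_i,\partial X_i)\to\infty$. Combining the interior injectivity radius bound (Lemma \ref{int-injrad}), the interior regularity for the adopted metric (Theorem \ref{regularityinterior}), and the Cheeger-Gromov compactness theorem (Lemma \ref{AKKLT}) applied without boundary, I would extract a subsequential pointed limit $(X_\infty^d, g_\infty, p_\infty)$ of $(X_i^d,\bar g_i, p_i)$: a complete Riemannian manifold without boundary, $Q$-flat (since $Q=0$ is scale invariant), with the normalization $\max_{|k|\le d-4}|\nabla^k Rm|^{2/(k+2)}(p_\infty)=1$.

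To identify the Einstein side, I would rescale in parallel $\tilde g_i^+:=K_i g_i^+$, so that $\text{Ric}[\tilde g_i^+]=-(d-1)K_i^{-1}\tilde g_i^+\to 0$. Setting $\bar\rho_i:=\sqrt{K_i}\rho_i$, one has $\bar g_i=\bar\rho_i^2 g_i^+$; using $\rho_i(p_i)\ge C>0$ in the interior (\cite[Lemma 4.4]{CG}) and $|\nabla\bar\rho_i|_{\bar g_i}\le 1$ (Lemma \ref{lem4.1}), the factor $\bar\rho_i$ is locally essentially constant on unit $\bar g_i$-balls, so a Cheeger-Gromov extraction for the uniformly bounded-curvature Einstein metrics $\tilde g_i^+$ should yield a complete Ricci-flat limit $(X_\infty,\tilde g_\infty^+)$ conformally related to $g_\infty$ by a positive smooth factor.

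Under condition ($1^{\prime}$), the scale- and conformally-invariant integral $\int|W|^{d/2}dvol$ passes to the limit as $W[\tilde g_\infty^+]=0$, hence $\tilde g_\infty^+$ is flat. The conformally related $g_\infty$, being $Q$-flat with non-negative scalar curvature on a complete boundary-less manifold, is then also forced to be flat: the conformal factor $u$ would satisfy $\triangle^2 u=0$ and $-\frac{\triangle u}{u}-\frac{2}{d-4}\frac{|\nabla u|^2}{u^2}\ge 0$ globally on $\mathbb{R}^d$, and a Liouville-type argument, simpler than the Boas calculation in Lemma \ref{No boundary Blow-up} because no boundary conditions intervene, forces $u$ to be constant. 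Under condition ($1^{\prime\prime}$), applying Lemma \ref{Lem:LQS} to $g_i^+$ at $p_i$ gives $\text{vol}_{g_i^+}(B(p_i,r/\sqrt{K_i}))/\text{vol}_{g_{\mathbb{H}^d}}(B(r/\sqrt{K_i}))\ge (Y_i/Y_S)^{(d-1)/2}$ for every $r>0$; as $K_i\to\infty$ the small hyperbolic balls flatten on the fixed $\tilde g_i^+$-scale, so the Ricci-flat $\tilde g_\infty^+$ inherits $\text{vol}(B(r))/(\omega_d r^d/d)\ge (1-\delta_0/Y_S)^{(d-1)/2}$ at every $r>0$. Combined with Bishop-Gromov monotonicity and the Euclidean small-ball limit, taking $\delta_0$ small enough should pin the asymptotic volume ratio to $1$, and the rigidity case of Bishop-Gromov would then yield $\tilde g_\infty^+\cong\mathbb{R}^d$. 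In either case $g_\infty$ is forced to be flat, contradicting $|Rm|(p_\infty)=1$.

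The main obstacle will be the rigidity step under ($1^{\prime\prime}$): a near-Euclidean asymptotic volume ratio on a smooth Ricci-flat manifold does not automatically imply isometry to $\mathbb{R}^d$. The two natural routes are (a) choosing $\delta_0$ small enough to invoke a quantitative Cheeger-Colding type almost-rigidity together with the Einstein structure, or (b) exploiting that the LQS inequality holds at \emph{every} scale simultaneously with Bishop-Gromov monotonicity so as to squeeze the volume ratio equal to $1$ in the limit. By contrast, the ($1^{\prime}$) case should reduce to a clean Liouville calculation and be essentially a boundary-free variant of the argument already used in Lemma \ref{No boundary Blow-up}.
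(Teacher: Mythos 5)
Your overall skeleton (blow-up by contradiction, pointed Cheeger--Gromov limit without boundary, normalization of the curvature at $p_\infty$) matches the paper, but the two places where you deviate are exactly where the gaps are. First, the identification of ``the Einstein side'' is not justified. You invoke $\rho_i(p_i)\ge C>0$ from \cite[Lemma 4.4]{CG}, but that bound only holds where $d_{g_i^*}(\cdot,\partial X)\ge \tfrac12$; in the interior blow-up case you only know $d_{\bar g_i}(p_i,\partial X)\to\infty$, i.e. $d_{g_i^*}(p_i,\partial X)\ge K_i^{-1/2}\cdot(\text{something}\to\infty)$, which is perfectly compatible with $d_{g_i^*}(p_i,\partial X)\to 0$ and hence with $\rho_i(p_i)\to 0$; nor is there any upper bound on $\rho_i(p_i)$. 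Since $\bar g_i=\rho_i^{2}\,(K_i g_i^+)$, the conformal factor relating $\bar g_i$ to your rescaled Einstein metrics $\tilde g_i^+$ can degenerate, so the claimed limit $\tilde g_\infty^+$ ``conformally related to $g_\infty$ by a positive smooth factor'' (on which both of your cases rest, including the Liouville step) is not established; you also do not check the curvature/injectivity bounds needed to extract that second limit. The paper avoids this entirely: from $\rho_i\ge C\,d_{g_i^*}(\cdot,\partial X)$ near the boundary and $\rho_i\ge C$ in the interior one gets $\bar\rho_i\to\infty$ locally uniformly, hence by (\ref{relation3}) $R[\bar g_i]=2(d-1)\bar\rho_i^{-2}(1-|d\bar\rho_i|^2_{\bar g_i})\to 0$, so $R_\infty\equiv 0$; then the $Q$-flat equation (\ref{Q-eq}) with $R\equiv 0$ forces $Ric_\infty\equiv 0$. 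Under ($1'$) one then applies the conformal and scale invariance of $\int|W|^{d/2}dvol$ directly to $\bar g_i\to g_\infty$ to get $W[g_\infty]=0$, and Ricci-flat plus Weyl-free gives flatness by the decomposition of the curvature tensor --- no auxiliary Einstein limit and no Liouville computation on $\mathbb{R}^d$ are needed (your Liouville step would in any case have to deal with the fact that a complete flat limit is a priori only a quotient of $\mathbb{R}^d$).

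Second, your treatment of ($1''$) is incomplete by your own admission, and the obstacle you describe is an artifact of setting up the contradiction with a fixed $\delta_0$. In the paper's argument (see the setup before Lemma \ref{No boundary Blow-up}) the blow-up sequence is assumed to satisfy $Y(\partial X,[h_i])\to Y(\mathbb{S}^{d-1},[g_{\mathbb{S}}])$, not merely $Y\ge Y(\mathbb{S}^{d-1})-\delta_0$; then Lemma \ref{Lem:LQS} pins the volume ratios of the Einstein metrics to exactly $1$ at every scale in the limit, and Bishop--Gromov \emph{rigidity} applies on the nose, with no need for Cheeger--Colding almost-rigidity or an Anderson-type gap theorem. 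With a fixed $\delta_0$ your route (b) cannot squeeze the ratio to $1$, and route (a) imports machinery the paper never uses, so as written the ($1''$) case does not close. In short: the contradiction hypothesis should be (\ref{C1}) or (\ref{C2}), the Ricci-flatness of $g_\infty$ should come from $\bar\rho_i\to\infty$ plus $Q$-flatness rather than from a parallel Einstein limit, and the conformal flatness of $g_\infty$ should be obtained as in Lemma \ref{No boundary Blow-up}.
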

\begin{proof} We consider the remaining case when
$$
\text{dist}_{\bar g_i}(p_i, \p X_i) \to \infty
$$
(at least for some subsequence). Notice that, 
$$
K_i = \max_{X_i}  \max_{k=(k_1,\cdots,k_l), \;|k|:=l\le d-4}|\nabla^k Rm_{g_i^*}|^{\frac2{|k|+2}} = \max_{k=(k_1,\cdots,k_l), \;|k|:=l\le d-4}|\nabla^k Rm_{g_i^*}|^{\frac2{|k|+2}}(p_i)
$$
for some $p_i\in X$ in the interior. Proceeding as the above boundary cases, one has the Cheeger-Gromov convergence
$$
(X^d_i, \bar g_i, p_i) \to (X^d_\infty, g_\infty, p_\infty)
$$
in 
 the $C^{d-2, \gamma'}$-Cheeger-Gromov topology. The proof in these cases follows from \cite{CG}.  We again very briefly sketch the proof that is 
more or less from \cite{CG}. One first derives from \eqref{relation3} that
$$
R_{\bar g_i} = 2(d-1)\bar\rho_i^{-2} (1 - |d\bar\rho_i|^2_{\bar g_i}).
$$
We also have 
\begin{itemize}
\item $\bar\rho_i (x) \ge C \text{dist}_{\bar g_i} (x, \p X_i)$. (cf. Step 2 in the proof of \cite[Lemma 4.9]{CG}).
\end{itemize}
Consequently, 
\begin{itemize}
\item $R_\infty =0$, and 
\item $g_\infty$ is Ricci-flat from being $Q$-flat and scalar flat in light of the $Q$-curvature equation \eqref{Q-eq}. (cf. Step 3 of
the proof of \cite[Lemma 4.9]{CG}).
\end{itemize}
Thus, $(X_\infty, g_\infty)$ is a complete Ricci-flat $d$-dimensional manifold with no boundary. 
As same arguments as in the previous part, we have  $(X_\infty, g_\infty)$ is locally conformally flat,  so that $(X_\infty, g_\infty)$ is flat because of the decomposition of the curvature tensor. Therefore, we obtain the desired contradiction.  
 For more details see \cite[Section 4.3]{CG}.
\end{proof}
\begin{proof}[ Proof of Lemma \ref{Lem:curv-estimate}] It is a direct consequence of Lemmas \ref{No boundary Blow-up}  and \ref{No interior blow-up}.
\end{proof}

We now begin the proof of Theorem \ref{maintheorem3bis}. For this purpose, we first establish the diameter bound.

\begin{lemm} 
\label{diameter}
Under the assumptions in Theorem \ref{maintheorem3bis}, the diameters of the adapted metrics
$g_i^*$ are uniformly bounded. 
\end{lemm}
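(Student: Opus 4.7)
The plan is to argue by contradiction, combining the uniform curvature and injectivity-radius bounds already proved with the volume comparison of Lemma \ref{Lem:LQS} and a rigidity analysis in the spirit of Lemmas \ref{No boundary Blow-up} and \ref{No interior blow-up}. Suppose, for contradiction, that $\diam(X_i, g_i^*) \to \infty$ after passing to a subsequence. Since the boundary metrics $\{h_i\}$ lie in a compact Cheeger-Gromov family, the intrinsic diameters $\diam(\partial X_i, h_i)$ are uniformly bounded, so the blow-up of $\diam(X_i, g_i^*)$ must be realized by interior points $p_i \in X_i$ with $d_i := \text{dist}_{g_i^*}(p_i, \partial X_i) \to \infty$.

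Next, by Lemma \ref{Lem:curv-estimate} together with the interior injectivity-radius estimate of Lemma \ref{int-injrad}, the pointed family $(X_i, g_i^*, p_i)$ satisfies the hypotheses of the Cheeger-Gromov compactness result in Lemma \ref{AKKLT}. Extracting a subsequential limit $(X_\infty, g_\infty, p_\infty)$ in the $C^{d-2,\gamma'}$ pointed topology, the fact that $d_i \to \infty$ ensures the limit is a complete Riemannian $d$-manifold with no boundary visible from $p_\infty$. Using the lower bound $\rho_i(x) \geq C > 0$ at fixed $g_i^*$-distance from the boundary provided by \cite[Lemma 4.4]{CG}, the conformal factors pass to a positive limit $\rho_\infty$, so that $g_\infty^+ := \rho_\infty^{-2} g_\infty$ is a complete Einstein metric on $X_\infty$ satisfying $Ric[g_\infty^+] = -(d-1) g_\infty^+$, and $v_\infty = \rho_\infty^{(d-4)/2}$ solves the limiting scattering equation of Lemma \ref{fgmetric}.

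I would then apply the volume comparison of Lemma \ref{Lem:LQS}, which gives
$$
1 \geq \frac{\text{vol}_{g_i^+}(B_{g_i^+}(p_i, r))}{\text{vol}_{g_{\mathbb{H}^d}}(B(r))} \geq \left(\frac{Y(\partial X_i, [h_i])}{Y(\mathbb{S}^{d-1}, [g_{\mathbb{S}^{d-1}}])}\right)^{(d-1)/2}
$$
for every $r > 0$. Under hypothesis ($1''$) the right-hand side tends to $1$, while under ($1'$) the $L^{d/2}$-smallness of Weyl on the limit together with the $Q$-flat structure plays the same role, as in the proof of Lemma \ref{No boundary Blow-up}, to force local conformal flatness. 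In either case Bishop-Gromov rigidity identifies $(X_\infty, g_\infty^+)$ with a quotient of hyperbolic space, and then the explicit ODE analysis for $u = (\tilde\rho/x_1)^{(d-4)/2}$ already carried out in Lemma \ref{No boundary Blow-up}, combined with the adopted-metric identification in Lemma \ref{fgmetric}, pins $g_\infty$ down to a flat metric on a bounded Euclidean domain. This has finite diameter, contradicting completeness and non-compactness of $(X_\infty, g_\infty)$ that was forced by $d_i \to \infty$.

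The main obstacle I expect is the passage to the limit of the defining function $\rho_i$ along base points going deep into the interior: one must verify that $\rho_i$ does not degenerate or blow up along $p_i$, so that the Einstein metric $g_i^+$ converges to a genuinely complete CCE metric with a recognizable asymptotic structure in the limit. The lower bound in \cite[Lemma 4.4]{CG} is the key input here, but its invocation is delicate precisely because $p_i$ is chosen to maximize distance to the boundary, so the scale on which the lemma applies must be tracked carefully against the rescaling implicit in the pointed Cheeger-Gromov limit.
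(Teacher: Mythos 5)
Your strategy does not close: the contradiction you aim for at the end never materializes. In the blow-up Lemmas \ref{No boundary Blow-up} and \ref{No interior blow-up} the contradiction comes from the curvature normalization $\max_k|\nabla^k Rm_{\bar g_i}|^{2/(k+2)}(p_i)=1$ surviving in the rescaled limit, so proving the limit is flat (or hyperbolic) is genuinely contradictory. In the diameter setting there is no such normalization: a complete non-compact pointed limit that is flat, Ricci-flat, or a hyperbolic quotient is perfectly compatible with $\diam(X_i,g_i^*)\to\infty$, so ``pinning $g_\infty$ down'' to such a model proves nothing; and the assertion that it is ``a flat metric on a bounded Euclidean domain'' is unsupported. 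Moreover the ODE analysis with $u=(\tilde\rho/x_1)^{(d-4)/2}$ from Lemma \ref{No boundary Blow-up} needs the conformal infinity (the coordinate $x_1$, the boundary data $u=1$, $\nabla u=\triangle u=0$) to be present in the limit, but you chose base points with $\mathrm{dist}_{g_i^*}(p_i,\p X_i)\to\infty$, so the boundary disappears from the pointed limit and that analysis cannot even be set up. There are two further unjustified steps: (i) the convergence of $\rho_i$ to a finite positive $\rho_\infty$ near $p_i$ requires an upper bound on $\rho_i(p_i)$, which is exactly what fails deep in the interior (in the paper's limit one in fact proves $\rho_\infty(x)\to+\infty$ at infinity), so $g_\infty^+=\rho_\infty^{-2}g_\infty$ with $Ric=-(d-1)g_\infty^+$ and the limiting scattering equation do not pass to the limit as claimed; (ii) rigidity via Bishop--Gromov or Weyl-vanishing needs $Y(\p X,[h_i])\to Y(\mathbb{S}^{d-1})$ or $\int|W|^{d/2}\to 0$, whereas the hypotheses ($1'$)/($1''$) only give almost-equality with a fixed $\delta_0$, which yields no rigidity unless you restructure the whole argument as the contradiction sequence used for Lemma \ref{Lem:curv-estimate}.

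The paper's proof is quantitative and avoids any rigidity/classification of the limit. After taking a Cheeger--Gromov limit that keeps the boundary (complete non-compact with boundary, using the curvature bound and non-collapsing), it shows: (1) $\rho_i\ge C$ at distance $\ge 1$ from $\p X$; (2) a uniform integral bound $\int_{\{d_{g_i^*}(x,\p X)\ge 1\}}\rho_i^{-3/2}\le C_2$, obtained by integrating the identity for $-\triangle\sqrt{\rho_i}$ coming from \eqref{relation3}--\eqref{relation4} and controlling the divergence boundary term; (3) hence $\rho_\infty(x)\to+\infty$ at infinity, via non-collapsing; (4) a volume lower bound $Vol(B^{g_\infty}(p,r))\ge c_v r^d$ for $r<\tfrac12\rho_\infty(p)$ from Lemma \ref{Lem:LQS} and Lemma \ref{lem4.1}; and (5) taking $r=\rho_\infty(p)^{3/4}$, steps (2) and (4) force $\rho_\infty(p)+\rho_\infty(p)^{3/4}\ge C\rho_\infty(p)^{d/2}$, bounding $\rho_\infty(p)$ and contradicting (3). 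If you want to salvage your approach, you would need to supply an analogous quantitative mechanism; the structure-of-the-limit route by itself cannot rule out unbounded diameter.
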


\begin{proof} We use the similar strategy as in  \cite[Section 4: {\it The proof of Lemma 4.2}]{CGQ}.   We indicate the difference.\\
Thanks to (\ref{relation3}) and (\ref{relation4}),  we infer
$$
-\triangle \sqrt {\rho_i}=\frac {(d+2)R_i\rho_i^{1/2}}{8(d-1)}+\frac{|\nabla \rho_i|^2}{4\rho_i^{3/2}}=\frac{(d+2)(1-|\nabla \rho_i|^2)}{4\rho_i^{3/2}}+\frac{|\nabla \rho_i|^2}{4\rho_i^{3/2}}.
$$
Thus, there exists some constant $C_2>0$ independent of $i$ such that 
\beq
\label{eqlem4.2}
\ds\int_{\{x, d_{ g_i^*}(x,\p X)\ge 1\}} \rho^{-3/2}_i (x)\le C_2.
\eeq
The rest of the proof is almost as same as in the case of dimension $4$. 

\end{proof}

\begin{proof}[ Proof of  Theorem \ref{maintheorem3bis}]
Thanks to Lemmas \ref{Lem:curv-estimate} and
\ref{diameter}, we can use the Cheeger-Gromov compactness result to prove Theorem \ref{maintheorem3bis} (see \cite[Section 5]{CG}). Hence, we finish the proof.
\end{proof}

\subsection{Proof of Theorem \ref{maintheorem3}}

The proof when the dimension $d$ may not be even follows the same outline as the cases when $d$ is even once one manages to gain on the regularity of the curvature tensors. We summarize it in the following lemma.
\begin{lemm} \label{Lem:curv-estimatebis} 
Suppose that $\{(X_i^d, g^+_i)\}$ is a sequence of conformally compact Einstein $d$-dimensional manifolds with all $d\ge 4$ satisfying
the assumptions  in Theorem \ref{maintheorem3}. 
Then there exists a positive constant constant $K_0$  such  that, for the adapted metrics $\{(X_i^d, g_i^*)\}$ 
associated with a compact family of boundary metrics $h_i$ --a representative of the conformal infinity $(\p X_i^d, [h_i])$, we have
\begin{equation}\label{Eq:curvature-boundbis}
\max_{X_i^d} | Rm_{g_i^*}| \le K_0
\end{equation}
for all $i$.
\end{lemm}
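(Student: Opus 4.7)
My plan is to follow the blow-up contradiction strategy of Lemma \ref{Lem:curv-estimate} essentially verbatim, but with the boundary regularity input replaced by the all-dimensions $\varepsilon$-regularity result Lemma \ref{lemma4Section3.1}. Suppose, toward a contradiction, that some subsequence satisfies $K_i := \max_{\overline{X_i}} |Rm_{g_i^*}| \to \infty$, achieved at points $p_i \in \overline{X_i}$. Rescale by setting $\bar g_i := K_i\, g_i^*$ and $\bar\rho_i := \sqrt{K_i}\,\rho_i$, so that $\bar g_i = \bar\rho_i^{2} g_i^+$ is still an adopted metric for $g_i^+$, with $|Rm_{\bar g_i}| \le 1$ globally and equality at $p_i$. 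The rescaled boundary metric $K_i h_i$ is intrinsically flatter, so the $C^6$ Cheeger--Gromov compactness of $\{h_i\}$ is preserved (with constants improving), and on any fixed $\bar g_i$-ball the boundary metric tends to the Euclidean metric as $K_i \to \infty$. One also checks that hypothesis ($1'$) is scale-invariant and hypothesis ($1''$) is conformally invariant on the boundary, so both persist along the rescaled sequence.

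Next I would establish pointed $C^{3,\gamma'}$ Cheeger--Gromov subconvergence $(X_i^d, \bar g_i, p_i) \to (X_\infty^d, g_\infty, p_\infty)$. Uniform lower bounds on the boundary and interior injectivity radii of $\bar g_i$ come from Lemmas \ref{bdy-injrad} and \ref{int-injrad}, together with the uniform lower bound on $i(\partial X, h_i)$ provided by the $C^6$ compactness. Uniform $C^{1,\lambda}$ control of $Rm_{\bar g_i}$ near each boundary point follows from Lemma \ref{lemma4Section3.1}, whose hypotheses (bounded curvature, controlled injectivity radii, $C^6$ boundary) are preserved or improved by the rescaling; interior curvature control is supplied by Theorem \ref{regularityinterior}. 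Feeding these into Lemma \ref{AKKLT} yields a complete $Q$-flat limit $(X_\infty^d, g_\infty)$ with $|Rm_{g_\infty}|(p_\infty)=1$.

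Finally, I would rule out both kinds of blow-up exactly as in Lemmas \ref{No boundary Blow-up} and \ref{No interior blow-up}. If $\dist_{\bar g_i}(p_i,\partial X_i)$ stays bounded, then $(X_\infty,g_\infty)$ has flat, totally geodesic Euclidean boundary; under ($1'$) the Weyl tensor of the associated $g_\infty^+$ vanishes, while under ($1''$) Lemma \ref{Lem:LQS} combined with Bishop--Gromov volume comparison forces $g_\infty^+$ to be locally hyperbolic. The Boas--Boas biharmonic ODE argument used in the proof of Lemma \ref{No boundary Blow-up} then shows that $g_\infty$ is flat near $p_\infty$, contradicting $|Rm_{g_\infty}|(p_\infty)=1$. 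If instead $\dist_{\bar g_i}(p_i,\partial X_i) \to \infty$, the limit is boundaryless; the estimate $\bar\rho_i(x) \ge C\,\dist_{\bar g_i}(x,\partial X_i)$ together with \eqref{relation3} gives $R_\infty = 0$, and \eqref{Q-eq} then makes $g_\infty$ Ricci-flat. Combined with local conformal flatness (again from ($1'$) or ($1''$)), this forces $g_\infty$ to be flat, a final contradiction.

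The main obstacle, relative to Lemma \ref{Lem:curv-estimate}, is that in general dimension we only have $C^{1,\lambda}$ curvature control from Section \ref{Sect:bdy2} rather than the $C^{d-4,\gamma}$ control available in even dimensions via the vanishing of the obstruction tensor. Consequently the Cheeger--Gromov convergence is in the weaker $C^{3,\gamma'}$ topology and the limit metric is only $C^{3,\gamma'}$ regular. However, this regularity is still enough to pass the pointwise bound $|Rm_{g_\infty}|(p_\infty)=1$ to the limit and to run each rigidity step unchanged: Bishop--Gromov comparison, the Boas--Boas biharmonic identity, and the \emph{Ricci-flat plus locally conformally flat implies flat} reduction all require no higher regularity.
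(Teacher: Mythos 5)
Your proposal is correct and matches the paper's intended argument: the paper proves this lemma by running the blow-up contradiction of Lemma \ref{Lem:curv-estimate} verbatim, with the even-dimensional obstruction-tensor regularity replaced by the $\varepsilon$-regularity of Section \ref{Sect:bdy2} (Lemma \ref{lemma4Section3.1}), which yields only $C^{1,\lambda}$ curvature control and hence $C^{3,\gamma'}$ Cheeger--Gromov convergence of the rescaled sequence, exactly as you describe. Your fleshing out of the rescaling, the scale/conformal invariance of ($1'$) and ($1''$), and the reuse of the boundary and interior rigidity steps is consistent with the paper's (very brief) proof.
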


We remark again that the constant $K_0$ in the statement of the lemma above depends on the smallness of the constant  $\delta_0$ which appears among the assumptions of Theorem  \ref{maintheorem3}.

First of all, one can establish properties in the hypotheses (H1) (H2) and (H3) in the statements of Lemmas \ref{lemmaSection3.1}  to \ref{lemma4Section3.1}  in Section \ref{Sect:bdy2}  for our normalized adapted metrics $\bar g_i$ by the same procedures of proof as Lemma \ref{Lem:curv-estimate} in this section. We can then replace the role of Lemma \ref{bdy1} in the proof of Theorem \ref{maintheorem3bis} by Lemma  \ref{lemma4Section3.1} to establish Lemma \ref{Lem:curv-estimatebis}, hence the proof of Theorem \ref{maintheorem3}.

\section{Uniqueness of Graham-Lee solutions in high dimension and a gap phenomenon} \label{Sect:uniqueness}

In this section we will derive the global uniqueness result Theorem \ref{uniqueness} and also indicate a gap phenomenon in the Corollary \ref{gap} below, both will be derived as consequences of our compactness Theorem \ref{maintheorem3}.

\begin{proof}[Proof of Theorem \ref{uniqueness}] 
The proof is almost the same as in the case when the dimension of the manifold is $4$ \cite[Section 5]{CGQ}. 
We will sketch the outline of the proof below. 

We will establish the result by a contradiction argument. Assume otherwise there is a sequence of conformal $(d-1)$-dimensional spheres $(\mathbb{S}^{d-1}, [h_i])$ 
that converges to the round sphere such that, for each $i$, there exists two non-isometric conformally compact Einstein metrics $g^+_i$
and $\tilde g^+_i$. 
\\

Up to a subsequence, both $g^+_i$ and $\tilde g^+_i$ converge to the hyperbolic space in the $C^{3,\gamma'}$-Cheeger-Gromov sense (in particular in the $C^{2, \gamma'}$-Cheeger-Gromov sense) due to Theorem \ref{maintheorem3} and the uniqueness result when the conformal infinity is the standard sphere \cite{Q,LQS}.\\

The main facts are the following:
\begin{itemize}
\item There exists a  diffeomorphism $\varphi_i$  of class $C^{2,\gamma}$ for any $\gamma\in (0,1)$ (equal to the identity on the boundary) (see Lemma \ref{lemma2Section3.1}), such that 
$$
 F(\varphi_i^* \tilde g^+_i, g_{i}^+)=0
$$
Moreover $\|\varphi_i(x)-x\|_{C^{2,\gamma}}\to 0$ and $\|\varphi_i^* \tilde g^+_i-g_{i}^+\|_{C^{1,\gamma}_{1+\gamma}}\to 0$.
\item Due to the local uniqueness result (see Lemma \ref{lemma3Section3.1}), for large $i$, we have 
$$
g_{i}^+=\varphi_i^*\tilde g^+_i.
$$

\end{itemize}

\end{proof}

As a direction consequence  of Theorem \ref{maintheorem3}, we are able to prove some gap phenomenon. Given some large positive number $\Lambda>0$ and when $d\ge 4$, let 
$$
\begin{array}{ll}
{\mathcal A}_\Lambda:=\{(\mathbb{S}^{d-1}, [h])| &\mbox{ $h$ 
could not be joint by
a continuous path
  in the set of the metrics}\\
  &\mbox{ with positive scalar curvature to the standard metric } g_{\mathbb{S}^{d-1}} \\
& \mbox{ in the }   C^{6}(\mathbb{S}^{d-1}) \mbox{ topology}, (\mathbb{S}^{d-1}, [h]) \mbox{ is the conformal infinity } \\
& \mbox { of some CCE metric},  h\mbox{ has positive constant scalar curvature with } \\
&\|h\|_{C^{6}(g_{\mathbb{S}^{d-1}})}\le \Lambda\}

\end{array}
$$
denote the union of the path connected components of the metrics on the spheres with the positive constant scalar curvature which are not connected to the standard metric in the $C^3$ topology. 

\begin{coro} \label{gap} For any given $\Lambda>0$ and  $d\ge 4$, assume that ${\mathcal A}_\Lambda$ is not empty. Then there exists some small positive constants $\varepsilon >0 $  and $\varepsilon_1 >0 $ such that there holds
\begin{enumerate}
\item  \quad $\sup_{h\in {\mathcal A}_\Lambda} Y(\mathbb{S}^{d-1}, [h]) \leq Y(\mathbb{S}^{d-1}, [g_{\mathbb{S}^{d-1}}])-\varepsilon$; \\
\item  \quad Given any  $h\in {\mathcal A}_\Lambda$, let $(X, \p X=\mathbb{S}^{d-1}, g^+)$ be some CCE metric with conformal infinity $[h]$ on sphere $\mathbb{S}^{d-1}$. Then we have
$$
\int_{X^n} (|W|^{d/2}dvol)[g^+] > \varepsilon_1.
$$
\end{enumerate}
\end{coro}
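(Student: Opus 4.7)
The plan is to prove both parts by contradiction, invoking the compactness Theorem~\ref{maintheorem3} under hypotheses $(1'')$ and $(1')$ respectively, and then using the rigidity statement of Lemma~\ref{Lem:LQS} to identify the limit as the Poincar\'e ball.

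For part (1), I would suppose there is a sequence $h_i \in \mathcal{A}_\Lambda$ with $Y(\mathbb{S}^{d-1},[h_i]) \to Y(\mathbb{S}^{d-1},[g_{\mathbb{S}^{d-1}}])$, arising as conformal infinities of CCE metrics $g_i^+$ on some $X_i$ with $\partial X_i = \mathbb{S}^{d-1}$. The uniform bound $\|h_i\|_{C^6(g_{\mathbb{S}^{d-1}})} \le \Lambda$ makes the boundary family compact in $C^{6-\epsilon}$, so hypothesis $(1'')$ is satisfied for $i$ large with $\delta_0$ arbitrarily small. Theorem~\ref{maintheorem3} would then yield, after diffeomorphisms fixing the boundary and along a subsequence, $g_i^* \to g_\infty^*$ in $C^{3,\gamma'}$ Cheeger--Gromov topology, producing a limit CCE manifold $(X_\infty, g_\infty^+)$ with $\partial X_\infty = \mathbb{S}^{d-1}$ and conformal infinity $[h_\infty]$ of Yamabe constant equal to $Y([g_{\mathbb{S}^{d-1}}])$. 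I would then apply Lemma~\ref{Lem:LQS}: the lower and upper bounds in \eqref{Eq:LQS} both equal one, so by Bishop--Gromov rigidity $g_\infty^+$ is isometric to $g_{\mathbb{H}^d}$; the conformal infinity $[h_\infty]$ is the round class, and by Obata together with the constant scalar curvature normalization $h_\infty = c\, g_{\mathbb{S}^{d-1}}$ for some $c>0$ (up to a conformal diffeomorphism of the sphere, which lies in the identity component of the M\"obius group). Because the diffeomorphisms are the identity on $\partial X$, the restriction of $g_i^*$ to the boundary is literally $h_i$, and interpolating the $C^{3,\gamma'}$ convergence with the uniform $C^6$ bound gives $h_i \to c\, g_{\mathbb{S}^{d-1}}$ in $C^2$. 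For $i$ large the linear path $(1-t) h_i + t\, c\, g_{\mathbb{S}^{d-1}}$, $t\in[0,1]$, is then a continuous path of positive scalar curvature metrics in $C^6$, which extends to $g_{\mathbb{S}^{d-1}}$ via scaling; this contradicts $h_i \in \mathcal{A}_\Lambda$.

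For part (2), the argument would be essentially parallel, replacing $(1'')$ by $(1')$. Supposing a sequence of CCE metrics $g_i^+$ with conformal infinities $h_i \in \mathcal{A}_\Lambda$ and $\int_{X_i} |W[g_i^+]|^{d/2}\, dvol[g_i^+] \to 0$, the $C^6$ bound forces $Y([h_i])$ to stay uniformly positive, so Theorem~\ref{maintheorem3} applies. By conformal invariance of $|W|^{d/2}\, dvol$ and the $C^{1,\gamma'}$ convergence of the curvature of the adopted metrics, the limit would satisfy $\int_{X_\infty} |W[g_\infty^+]|^{d/2}\, dvol = 0$, hence $g_\infty^+$ is locally conformally flat. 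Einstein plus $W=0$ in dimension $\ge 4$ forces constant sectional curvature, equal to $-1$ by the normalization $\operatorname{Ric} = -(d-1)g^+$, so $(X_\infty, g_\infty^+)$ is locally hyperbolic. Applying Lemma~\ref{Lem:LQS} then gives $Y([h_\infty]) = Y([g_{\mathbb{S}^{d-1}}])$, putting us back in the situation of part (1); the same path argument then completes the contradiction.

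The main obstacle I expect is to verify carefully that the $C^{3,\gamma'}$ Cheeger--Gromov convergence of the adopted metrics really does push down to $C^2$ convergence of the boundary metrics $h_i$, in a form strong enough for the straight-line path to remain inside the set of positive scalar curvature metrics, and to handle the possible M\"obius ambiguity of the limit round metric without disturbing the path-connectivity conclusion. The uniform $C^6$ hypothesis in the definition of $\mathcal{A}_\Lambda$ is precisely what allows this via compact embedding, and the connectedness of the conformal group of $\mathbb{S}^{d-1}$ disposes of the ambiguity; once these are in hand, both statements reduce cleanly to the compactness of Theorem~\ref{maintheorem3} together with the hyperbolic-ball rigidity already established in the earlier sections.
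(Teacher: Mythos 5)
Your overall strategy coincides with the paper's: argue by contradiction, invoke Theorem \ref{maintheorem3} under ($1'$) resp. ($1''$), identify the limiting boundary metric as a round metric, and then join $h_i$ for large $i$ to the standard metric through positive scalar curvature metrics, contradicting the definition of $\mathcal{A}_\Lambda$. The paper's own proof is a two-line version of exactly this (it treats both hypotheses simultaneously and simply asserts that, by Theorem \ref{maintheorem3}, $h_i$ converges to the round metric in $C^{3,\alpha}$), so your part (1) --- Bishop--Gromov rigidity via Lemma \ref{Lem:LQS} to identify the limit as hyperbolic space, Obata to pin down the constant scalar curvature representative, and the straight-line path --- is a faithful, more detailed rendering of the intended argument.

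The one step that fails as you justify it is in part (2): from ``$g^+_\infty$ is locally hyperbolic'' you cannot conclude $Y([h_\infty])=Y(\mathbb{S}^{d-1},[g_{\mathbb{S}^{d-1}}])$ by ``applying Lemma \ref{Lem:LQS}'', because the inequality \eqref{Eq:LQS} only bounds the volume ratio \emph{from below} by a power of $Y/Y(\mathbb{S}^{d-1})$; knowing the ratio equals $1$ (which anyway requires global, not local, hyperbolicity) gives no information on $Y$ in the direction you need. The intended conclusion is still reachable in the spirit of the paper: since $W[g^+_\infty]\equiv 0$, the compactified limit is locally conformally flat, hence so is its boundary sphere (parts (4) and (5) of Lemma \ref{bdy}), and a simply connected locally conformally flat $(d-1)$-sphere is conformally round by Kuiper's theorem; alternatively one shows the limit is globally $\mathbb{H}^d$, whose conformal infinity is round by inspection. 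Either repair puts you back in the situation of your part (1), and the path argument finishes the contradiction. The remaining delicate points you yourself flag --- pushing the $C^{3,\gamma'}$ Cheeger--Gromov convergence down to the boundary metrics, and the diffeomorphism ambiguity of the limiting round metric --- are treated at the same (implicit) level in the paper's proof, so I would not count them against you.
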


\begin{proof}[Proof of Corollary \ref{gap}] 
We will prove this by contradiction. Suppose there exists a sequence of CCE metrics $(X,g_i^+)$ with  $[{h}_i]\in {\mathcal A}_\Lambda$ such that :\\
Either\\
$$\ds Y(\partial X, [h_i]) \to Y(\mathbb{S}^{d-1}, [g_{\mathbb{S}}]), $$
or \
$$\ds\int_{X^d} (|W|^{d/2}dvol)[g^+_i]\to 0.$$
In view of Theorem \ref{maintheorem3}, up to a subsequence, ${h}_i$ converges to the standard metric $h_{\S^{d-1}}$ in the $C^{3,\alpha}$ topology for all $\alpha\in (0,1)$ so that ${h}_i$ should be in the same connected component of metrics on the standard sphere with positive scalar curvature. Thus, we get a desired contradiction due to the definition of ${\mathcal A}_\Lambda$. 
\end{proof}

\begin{RK}
In the above result, we can assume  the metrics in the set ${\mathcal A}_\Lambda$ are in the $C^{5,\gamma}$-Cheeger-Gromov topology.
\end{RK}

\begin{appendices}
\section{Proof of Lemma \ref{bdy}}
(1) follows as
we have $\frac{R}{2(d-1)}= \frac{\hat{R}}{(d-2)}$  on the boundary (see \cite[section 6]{casechang}). \\
For (2), first we have the Gauss-Codazzi equations
$$
R_{\alpha\beta\gamma\delta}=\hat {R}_{\alpha\beta\gamma\delta} \mbox{ and } R_{1\beta\gamma\delta}=0, 
$$
so that 
$$
R_{1\alpha }=0.
$$
To prove the rest of the assertions in (2),  we write $g_1=r^2 g^+$ the compactified metric under some geodesic defining function $r$ and $g^*=\rho^2 g^+$ the  corresponding adapted metric. We  know both $g_1$ and $g^*$ have the same boundary metric $h$ which is totally geodesic. We write $g^*=w^{-2}g_1:=(\frac{r}{\rho})^{-2}g_1$. Thus, on the boundary $\p X$, we have $w\equiv 1$,  and $\nabla w\equiv 0$. As a consequence, we infer that on the boundary
$$
A_{\alpha\beta}[g^*]=A_{\alpha\beta}[g_1], \;\;W[g^*]=W[g_1],
$$
since $\nabla_\alpha\nabla_\beta w=0$ on $M$. 

We now study the Schouten tensor and Weyl tensor for the compactified metric $g_1$.   We note the full indices $i,j,k\in\{1,\cdots, d\}$. As before, we have $ r^2 g^{+} =:  g = ds^2 + g_r$, $g_r = h + g^{(2)} r^2 + O (r^4)$, $g^{(2)}_{\alpha \beta} = - \hat A_{\alpha\beta}$ where $ \hat A$ is the Schouten tensor of the metric $h$ (see \cite{G00}).

The proof to verify the rest of the assertions in (2) has done before in Section 2 of \cite{CG} when $d=4$. The proof we will present below are relatively routine, we sketch the proof here just for the convenience of the readers.  Let $(x_1, x_2, \cdots, x_d)$ denote the Fermi coordinates. We have $g_{11}=1$, $g_{1\alpha}=0$ and $g_{\alpha\beta}= h_{\alpha\beta}+O(r^2)$. A direct calculation leads to the  Christoffel symbols $\Gamma^i_{j1}=0$ on the boundary $M$, that is $({\nabla_g})_{\frac{\partial}{\partial x_\alpha}}\frac{\partial}{\partial x_1}=0 $ on the boundary $M$ 
due to the fact that the boundary is totally geodesic.

Fix a point $P$ on the boundary $M$. 
At $P$, we have the Christoffel symbols $\Gamma^i_{jk}=0$ by choosing the normal coordinates at $P$.

 Hence, we can write at $P$
$$
{R_{ijk}}^l=\frac12g^{lm}(g_{im,kj}+ g_{jk,mi}-g_{ik,mj}- g_{jm,ki}).
$$
Thus 
\beq
\label{eqschouten}
{R_{1\alpha 1}}^\gamma=-\frac12g_{\alpha\gamma,11} =-g^{(2)}_{\alpha\gamma}= \hat A_{\alpha\gamma}.
\eeq
On the other hand, on the boundary $M$, we have also the Gauss-Codazzi equations
$$
{R_{\alpha\beta\gamma}}^\delta= {{ \hat {R}}_{\alpha\beta\gamma} }^{  \;\;\;\;\; \;\;\delta}\mbox{ and }{R_{1\beta\gamma}}^\delta=0,
$$
when the boundary is totally geodesic. Therefore, at the point $P$, we have $R_{\alpha 1}=0$, $R_{\alpha\beta}=\hat R_{\alpha\beta}+{R_{\alpha1\beta}}^1$, and $R=\hat R +2R_{11}$. On the other hand, it follows from (\ref{eqschouten}) that 
$$
R_{11}=\frac{\hat R}{2(d-2)}\mbox{ and }R=\frac{d-1}{d-2} \hat R.
$$
Gathering the above relations from (\ref{eqschouten}), we infer 
$$
\begin{array}{lll}
A_{11}=0 ,\\
\ds A_{1\alpha}=\frac1{d-2}R_{1\alpha }=0 ,\\
\ds A_{\alpha\beta}=
\frac1{d-2}(\hat R_{\alpha\beta }+ {R_{1\alpha 1}}^\beta-\frac R{2(d-1)} g_{\alpha\beta})=\frac1{d-2}(\hat R_{\alpha\beta }+ \hat A_{\alpha\beta}-\frac {\hat R}{2(d-2)} g_{\alpha\beta})= \hat A_{\alpha\beta}.
\end{array}
$$
Hence, we finish the proof of (2). 

For (3), by the decomposition of Riemann curvature, we have 
$$
W_{\alpha\beta\gamma\delta}=R_{\alpha\beta\gamma\delta}-(A\owedge g)_{\alpha\beta\gamma\delta}=\hat R_{\alpha\beta\gamma\delta}-(\hat A\owedge \hat g)_{\alpha\beta\gamma\delta}=\hat W_{\alpha\beta\gamma\delta}.
$$
On the other hand, we know
$$
R_{1\beta\gamma\delta}=0=(A\owedge g)_{1\beta\gamma\delta},
$$
so that 
$$
W_{1\beta\gamma\delta}=0.
$$
Moreover, by  (\ref{eqschouten}) and the decomposition of Riemann curvature, we infer
$$
W_{1\beta1\delta}=R_{1\beta1\delta}-(A\owedge g)_{1\beta1\delta}=R_{1\beta1\delta}-A_{\beta\delta}=R_{1\beta1\delta}-\hat A_{\beta\delta}=0.
$$
It is clear that $W_{111\delta}=W_{1111}=0$. Hence, we prove (3).\\
Now, for (4), using (\ref{Cottonex}), we infer
\beq
\label{Cottonexbis}
 \rho \mathcal{C}_{ijk}=\nabla^l\rho W_{jkil}.
\eeq
Thus by taking the covariant derivative, we get
$$
\nabla^m \rho \mathcal{C}_{ijk}+ \rho \nabla^m\mathcal{C}_{ijk}=\nabla^m\nabla^l\rho W_{jkil}+\nabla^l\rho\nabla^m W_{jkil}.
$$
Hence, together with (\ref{relation5}) and by choosing $m=1$, we deduce that on the boundary $M$
$$
\mathcal{C}_{ijk}=\nabla^1 W_{jki1}=\nabla^p W_{jkip}-\nabla^\alpha W_{jki\alpha}=(d-3)\mathcal{C}_{ijk}-\hat{\nabla}^\alpha W_{jki\alpha}.
$$
That is,
$$
(d-4)\mathcal{C}_{ijk}=\hat{\nabla}^\alpha W_{jki\alpha}.
$$
Therefore
$$
(d-4)\mathcal{C}_{\beta\gamma\delta}=\hat{\nabla}^\alpha \hat{W}_{\gamma\delta\beta\alpha}=(d-4)\hat{C}_{\beta\gamma\delta}.
$$
This gives $\mathcal{C}_{\beta\gamma\delta}=\hat{C}_{\beta\gamma\delta}$ when $d\neq 4$ (when $d=4$, it is done in \cite[Lemma 2.3]{CG}). When the indices $ijk$ contain $1$, it follows from (3)
$$
\mathcal{C}_{ijk}=0.
$$
Thus we have established (4).

To see (5), using the expression of the Schouten tensor on the boundary, we have
$$
\nabla_\alpha A_{\beta\gamma}=\hat{\nabla}_\alpha \hat{A}_{\beta\gamma}, \nabla_\alpha A_{11}=\nabla_\alpha A_{1\beta}=0,
$$
which together with the expression of the Cotton tensor on the boundary, we get
$$
\nabla_1 A_{1\alpha}=0, \nabla_1 A_{\alpha\beta}=0.
$$
Applying  the second Bianchi identity, we obtain
$$
\nabla_\alpha A_{1\alpha}+\nabla_1 A_{11}=\frac{\nabla_1 R}{2(d-1)}.
$$
From this we conclude  (5).

To see (6), we first observe that the first equality in (6) is a direct result of the ones in (3).
In addition (3), we also have when the indices $ijkl$ contain 1
$$
\nabla_\alpha W_{ijkl}=0.
$$
Hence $$\nabla_1 W_{\alpha\beta\gamma1}=\nabla_k W_{\alpha\beta\gamma k}-\nabla_\delta W_{\alpha\beta\gamma\delta}= \nabla_k W_{\alpha\beta\gamma k}-\hat{\nabla}_\delta \hat{W}_{\alpha\beta\gamma\delta}=(d-3) \mathcal{C}_{\gamma\alpha\beta}-(d-4) \hat{C}_{\gamma\alpha\beta}=\hat{C}_{\gamma\alpha\beta,}$$
and
$$\nabla_1 W_{\alpha1\gamma1}=\nabla_k W_{\alpha1\gamma k}-\nabla_\delta W_{\alpha1\gamma\delta}= \nabla_k W_{\alpha1\gamma k}=(d-3)\mathcal{C}_{\gamma\alpha1}=0.$$ 
Thus we have established (6).

\section{M\"obius coordinates and weighted function spaces}\label{section 2.1}

We introduce M\"obius coordinates on conformally compact Einstein manifolds in \cite{Lee1}.\\ 

Let $(X,g^+)$ be a conformally compact Einstein $d$-manifold with a continuous conformal compactification $g=\rho^2g^+$, where $\rho$ is a defining function for $(\overline{X},g).$ For   any small postive number $\epsilon>0,$ let $X_\epsilon$ denote  the open subset of $\overline{X}$ where $0<\rho<\epsilon$ and  $\overline{X}_\epsilon$ denote the open subset where $0\leq\rho<\epsilon.$
 \par We choose smooth local coordinates $\theta=(\theta^2,\theta^2,\cdots,\theta^{d})$ on an open set $U\subset \p X.$ Extend these to coordinates
 $(\theta^1, \theta)=(\rho,\theta^2,\theta^2,\cdots,\theta^{d})$  on the open subset $\Omega= [0,\epsilon) \times U\subset \overline{X}.$ Choose finitely many $U_i$ to cover $\p X.$ The resulting coordinates on $\Omega_i=[0,\epsilon_i)\times U_i$ will be called
background coordinates for $\overline{X}.$ Let $R$ be the smallest of these $\epsilon_i,$ then any point in $\overline{X}_R$ is contained in some background coordinate chart.
 \par Now we consider the upper half-space model of hyperbolic space, i.e. $\mathbb{H}^{d}=\{(y,x)=(y,x^2,x^2,\cdots,x^{d})\in\mathbb{R}^{d}:y>0\},$ with $x^1=y$ and with the hyperbolic
metric $\check{g}$ given in coordinates by
$$\check{g}=\frac{1}{y^2}(dy^2+dx^2).$$
We let $B_1$ and $B_2$ denote the hyperbolic geodesic ball of radius 1 and 2 centered at  point $(y,x) = (1,0).$ For any point $p\in X_{R},$ let $( \rho_0,\theta_0)$ be the coordinate representation of $p$ in some fixed background chart. We can define a diffeomorphism
$\Phi_p: B_2\rightarrow X$ by
$$( \rho,\theta) = \Phi_p(y,x) = (\rho_0y,\theta_0 + \rho_0x).$$
As is shown in \cite{Lee1}, $\Phi_{p_0}$ maps $B_2$ diffeomorphically onto a neighborhood of $p_0$ in $X_R$ if $p_0\in X_{R/8}.$  And there exists a countable set of points $\{p_i\}\subset X_{R/8}$ such that the sets $\Phi_{p_i}(B_2)$ form a uniformly locally
finite covering of $X_{R/8},$ and the sets $\{\Phi_{p_i}(B_1)\}$ still cover $X_{R/8}.$  We set
$$\Phi_i=\Phi_{p_i},\ \ V_1(p_i)=\Phi_i(B_1),\ \ V_2(p_i)=\Phi_i(B_2).$$
We call $(V_2(p_i),\Phi_i^{-1})$ a M\"obius coordinate chart of $X_{R/8}.$\\

In \cite{Lee1}, Lee  introduced also the boundary M\"obius coordinates: for any given $p\in\p X,$ let $\Omega$ be a neighbourhood and $(\rho,\theta)$ be the background coordinates such that $\theta(p)=0$. For each $a>0$ and $R$ sufficiently small, we define $Y_a\subset\mathbb{H}$ and
 $Z_R(p)\subset\Omega\subset\overline{X}$:
 \begin{eqnarray} \label{B1}Y_{a} &=\{(y,x) \in \mathbb{H} :|x|<a, 0<y<a\} \\  \label{B2}Z_{R}(p) &=\{(\rho,\theta) \in \Omega :|\theta|<R, 0<\rho<R\}\end{eqnarray}
 Define a chart $\Psi_{p, R} : Y_{1} \rightarrow Z_{R}(p)$ by
 $$(x,y)\mapsto (Ry, Rx)=(\rho, \theta). $$
 We will call $\Psi_{p, R}$ a boundary M\"obius chart of radius $R$ centered at $p.$
\\

 Assume $(X,g^+)$ is a conformally compact Einstein  manifold of class $C^{l,\beta}$ with $l\ge 2$  and $0\le\beta <1$. We consider a geometric tensor bundle $E$ of weight $r$ on $\bar X$ (resp. $X$).  In \cite{Lee1}, we introduce weighted H\"older spaces of tensor fields 
 $C^{m,\alpha}_{(s)}(\overline{X};E)$  on 
 $\bar X$ with $ m + \alpha \le l +\beta $  and $ s \le l+\beta $ (resp. 
 $C^{m,\alpha}_{t}(X;E)$  on $X$ with $m+\alpha\le l+\beta$  and $t\in \R$).\\

\par There are the following relationships between the H\"older spaces on $X$ and those on $\overline{X}$:

\begin{lemm}\cite[Lemma 3.7]{Lee1}\label{Lee1bis}
Let $E$ be a geometric tensor bundle of weight $r$ over $\overline{X},$ and suppose $0<\alpha<1,0<m+\alpha\leq l+\beta,$ and $0\leq s\leq k+\alpha.$ The following inclusions are continuous.
\par (a) $C^{m,\alpha}_{(s)}(\overline{X};E)\hookrightarrow C^{m,\alpha}_{s+r}(X;E),$
\par (b) $C^{m,\alpha}_{m+\alpha+r}(X;E)\hookrightarrow C^{m,\alpha}_{(0)}(\overline{X};E).$
\end{lemm}
\end{appendices}

\end{document}